\newtheorem{lem}{Lemma}[section]
\newtheorem{cor}[lem]{Corollary}
\newtheorem{teo}[lem]{Theorem}
\newtheorem{os}[lem]{Remark}
\newtheorem{defi}[lem]{Definition}
\newtheorem{prop}[lem]{Proposition}
\newenvironment{proof}{{\sc{Proof.}}}{\hfill\qed}
\newcommand{\qed}{\thinspace\null\nobreak\hfill\hbox{\vbox{\kern-.2pt\hrule
			height.2pt depth.2pt\kern-.2pt\kern-.2pt \hbox to2.5mm{\kern-.2pt\vrule
				width.4pt \kern-.2pt\raise2.5mm\vbox to.2pt{}\lower0pt\vtop
				to.2pt{}\hfil\kern-.2pt \vrule
				width.4pt \kern-.2pt}\kern-.2pt\kern-.2pt\hrule height.2pt depth.2pt
			\kern-.2pt}}\par\medbreak}
\newcommand{\R}{\mathbb{R}}
\newcommand{\C}{\mathbb{C}}
\newcommand{\N}{\mathbb{N}}
\newcommand{\Rp}{\textrm{\emph{Re}\,}}
\newcommand{\eps}{\varepsilon}
\newcommand{\ov}{\overline}
\newcommand{\ds}{\displaystyle}
\date{}
\begin{document}

\title{
Sharp kernel bounds  for  parabolic operators with first order degeneracy}
\author{L. Negro \thanks{Dipartimento di Matematica e Fisica``Ennio De Giorgi'', Universit\`a del Salento, C.P.193, 73100, Lecce, Italy.
e-mail:  luigi.negro@unisalento.it} \qquad C. Spina \thanks{Dipartimento di Matematica e Fisica ``Ennio De Giorgi'', Universit\`a del Salento, C.P.193, 73100, Lecce, Italy.
e-mail:  chiara.spina@unisalento.it}}

\maketitle
\begin{abstract}
\noindent 
We prove sharp upper and lower estimates for the parabolic kernel of  the singular elliptic operator
\begin{align*}
	\mathcal L&=\mbox{Tr }\left(AD^2\right)+\frac{\left(v,\nabla\right)}y,
\end{align*}
in the half-space $\R^{N+1}_+=\{(x,y): x \in \R^N, y>0\}$ under Neumann or oblique derivative boundary conditions at $y=0$.

\bigskip\noindent
Mathematics subject classification (2020): 35K08, 35K67,  47D07, 35J70, 35J75.
\par

\noindent Keywords: degenerate elliptic operators, singular elliptic operators, boundary degeneracy, kernel estimates.
\end{abstract}

\section{Introduction}
In this paper   we  study sharp upper and lower estimates for the parabolic kernel of the   singular   operator
 \begin{equation}\label{def L}
\mathcal L=\mbox{Tr }\left(AD^2\right)+\frac{\left(v,\nabla\right)}y
=\sum_{i,j=1}^{N+1}a_{ij}D_{ij}+\frac{ d\cdot \nabla_x+cD_y }{y}
\end{equation}
in the half-space $\R^{N+1}_+=\{(x,y): x \in \R^N, y>0\}$.  Here   $A=\left(a_{ij}\right)\in \R^{N+1,N+1}$ is a symmetric and positive definite 
matrix and   we suppose $\frac c\gamma +1>0$, where $\gamma=a_{N+1,N+1}$; the vector $v=(d,c)\in\R^{N+1}$ satisfies  $d=0$ if $c=0$ i.e. $v$ is  oblique with respect to the boundary of $\R^{N+1}_+$.  We    endow $\mathcal L$ with   Neumann or  oblique derivative boundary conditions at $y=0$
$$\lim_{y\to 0} D_y u=0\quad \text{(if $v=0$)},\qquad\qquad  \lim_{y\to 0}y^{\frac c\gamma}\, v \cdot \nabla u=0\quad \text{(if $c\neq 0$)}.$$
Up to some suitable linear transformations, the latter operator is equivalent to  the model operator 
\begin{align}\label{intro model operator}
	\mathcal L =\Delta_{x} +2a\cdot\nabla_xD_y+ B_y, \qquad  B_y=D_{yy}+ \frac cy D_y
\end{align}
were $B_y$ is a one-dimensional Bessel operator, $c+1>0$  and  $a\in\R^N$ satisfies the requirement $|a| <1$ which is equivalent to the ellipticity of the top order coefficients.

\medskip
In the special case $a=0$ and  $\mathcal L=\Delta_x+B_y$,  that is when the mixed second order derivatives do not appear, the   Bessel operator $B_y$ and the Laplacian $\Delta_x$ commute and sharp kernel estimates  can be  proved  after an explicit description of the kernel of the Bessel operator (see \cite{MNS-Caffarelli}).   These operators  play a major role in the investigation of the fractional powers $(-\Delta_x)^s$ and  $(D_t-\Delta_x)^s$, $s=(1-c)/2$, through the  ``extension procedure" of Caffarelli and Silvestre, see \cite{Caffarelli-Silvestre}.

\medskip 
Unfortunately  adding the mixed second order derivatives term is an hard complication of the problem, 
one important reason being the loss of commutativity. However the study of the general operator \eqref{def L}   is   crucial  for treating degenerate operators in domains since mixed derivatives and oblique boundary conditions appear in  the localization procedure.
For this reason its properties have been  studied in a series of previous papers. Elliptic and parabolic solvability of the associated problems in weighted $L^p$ spaces have been  investigated in \cite{MNS-Singular-Half-Space, MNS-Degenerate-Half-Space, MNS-Caffarelli, MNS-CompleteDegenerate, Negro-AlphaDirichlet} where we proved, under suitable assumptions on $m$ and $p$ and for $\frac{c}{\gamma}+1>0$, that  $\mathcal L$ generates an analytic semigroup in $L^p_m$ $=L^p(\R^{N+1}_+; y^m dxdy)$.  In addition  we characterized its domain as a weighted Sobolev space and showed that it has the  maximal regularity.

\medskip
In addition we also refer to   \cite{dong2020parabolic, dong2021weighted, dong2020RMI, dong2020neumann} where the authors studied  operators of this form, even for variable coefficients, using tools form linear PDE and Muckhenoupt weights. In \cite{Robinson-Sikora2008}, among other properties, kernel estimates for some second order degenerate operators in divergence form are considered.

\medskip
Our main results are Theorems  \ref{lower} and \ref{complete-sharp}  where   we prove that $\mathcal L$ generates in $L^2_{\frac c\gamma}$ a contractive analytic semigroup $\left(e^{t\mathcal L}\right)_{t\in\Sigma}$  on some  suitable sector  $\Sigma$ of the complex plane,	and its heat kernel $p_{{\mathcal L}}$, written  with respect the measure $y^\frac{c}{\gamma}dz$, satisfies the following two-sided  estimates 
\begin{align}\label{Intro est uplow}
	p_{{\mathcal  L}}(t,z_1,z_2)
	\simeq C t^{-\frac{N+1}{2}} y_1^{-\frac{c}{2\gamma}} \left(1\wedge \frac {y_1}{\sqrt t}\right)^{\frac{c}{2\gamma}} y_2^{-\frac{c}{2\gamma}} \left(1\wedge \frac{y_2}{\sqrt t}\right)^{\frac{c}{2\gamma}}\,\exp\left(-\dfrac{|z_1-z_2|^2}{kt}\right)
\end{align}
 where $t>0$, $z_1=(x_1,y_1),\ z_2=(x_2,y_2)\in\R^{N+1}_+$ and $C,k>0$ are some positive constants which may be different in the lower and upper bounds. We also point out here that the above inequalities can be written in the equivalent form \eqref{upper estimates ver2} (see Remark \ref{oss equiv estimate}). 
 
We    proved   the upper  bound  of  \eqref{Intro est uplow}   in \cite{Negro-Spina-SingularKernel}   by  revisiting the classical method based upon the equivalence between Gagliardo-Nirenberg type inequalities and ultra-contractivity estimates; we also proved that  the same estimate extends  to complex times  $t\in\Sigma$.  
 
 In this  the paper we   therefore focus on proving the    lower bounds in  \eqref{Intro est uplow}.
As a needed, but also of independent interest, result,   we prove  in Theorem \ref{space derivative estimates}  the following  gradient estimates 
	\begin{align} \label{Intro est grad}
			\left|\nabla p_\mathcal L(t,z_1,z_2)\right|
			\	\leq C |t|^{-\frac{N+2}{2}}  y_2^{-\frac c\gamma } \left(1\wedge \frac{y_2}{\sqrt{ |t|}}\right)^{\frac c\gamma }\,\exp\left(-\dfrac{|z_1-z_2|^2}{k|t|}\right),\qquad t\in\Sigma.
 \end{align}

\medskip
Let us describe the  strategy of proof employed. Let us discuss, preliminarily and  in an abstract setting, a  well established  strategy (see e.g. \cite[Chapter 7, Section 7.8]{Ouhabaz}) for deducing lower heat kernel estimates referring to the discussion at  the beginning of Section \ref{section kernel lower}  for further details.  Let $A$ be  a non-negative self-adjoint operator  acting  on  $L^2\left(X,\mu\right)$ where  $X$ is a homogeneous space i.e. a metric space $(X,d)$ endowed with a doubling measure $\mu$. Assume that the semigroup $e^{tA}$ has a kernel $p(t,z_1,z_2)$ which satisfies  the Gaussian upper bound
	\begin{align}\label{abstract up kernel}
	p(t,z_1,z_2)
	&\leq \frac{C}{V\left(z_1,\sqrt t\right)^{\frac 1 2}V\left(z_2,\sqrt t\right)^{\frac 1 2}}\exp\left(-\frac{d(z_1,z_2)^2}{\kappa t}\right),\quad \forall \ t>0, \ z_1,\ z_2\in X.
\end{align} where $V(z,r)=\mu(B(z,r))$ is the measure of the $X$-balls. Then the first crucial step consists  in the derivation of the  on-diagonal lower  estimate 
\begin{align}\label{abstract lower  diagonal}
	p(t,z,z)\geq \frac{C}{V(z,\sqrt t)},\qquad z\in X,\ t>0,
\end{align}
which can be proved by  combining the conservation  property of the kernel and the upper estimate \eqref{abstract up kernel} (see e.g. \cite[Proposition 7.28]{Ouhabaz}). The general off-diagonal lower bound then is proved by using the H\"older continuity of $p$ to go outside of the diagonal and an iterative argument based on the validity of the so-called chain condition  for $X$.\\

In our case, due to the non self-adjointness of $\mathcal L$, this strategy does not work. Then, inspired by the method of Nash \cite{Nash,Fabes-Stroock}, we  prove  the on-diagonal lower estimate by proving, in Proposition \ref{log-kernel-est}, the so-called  ``G-bound''
\begin{align}\label{Gbound intro}
	\int_{\R^{N+1}_+} \log p_{\mathcal L}(1,z,z_2)e^{-\alpha|z_2|^2}y_2^\frac c\gamma \, dz_2\geq -C,\qquad |z|\leq 1,
\end{align}
namely a lower bound of the integral of the logarithm of the heat kernel  with respect to the    weighted gaussian measure $\nu=e^{-\alpha|z|^2}y^\frac c\gamma \, dz$, $\alpha>0$.  This is not an easy task due to the presence of the weight $y^\frac{c}{\gamma}$  introduced to treat the degeneracy of the operator and which can be either singular or degenerate at $y=0$ and at infinity.  We emphasize that the  derivation of  \eqref{Gbound intro} relies on the  validity of some Poincar\'{e}-type inequalities in $H^1_\nu(\R^{N+1}_+)$ which require a separate study and which we prove in \cite{Negro-Spina-Poincare}. 

Another  difficulty appears: in order to compensate the non self-adjointness of $\mathcal L$, we need also  to  study in parallel both $\mathcal L$ and $\mathcal L^\ast$. In particular once \eqref{Gbound intro} is proved for both $\mathcal L$ and $\mathcal L^\ast$, then  by using  the reproducing property of the kernel and  Jensen inequality we derive, in Proposition \ref{lower-diag}, the on-diagonal lower estimate for  $p_{\mathcal L}$. Finally, thanks to the gradient estimates  proved in Section \ref{section gradient estimate}, we can go outside the diagonal proving the off-diagonal lower bound in  \eqref{Intro est uplow}.\\

\medskip

We now describe the structure of the paper.

\medskip
In  Section \ref{section general operator}  we introduce the operator $\mathcal L$ in \eqref{def L} and we collect the  results we need about generation of semigroups, maximal regularity and domain characterization in the weighted spaces $L^p_m(\R_+^{N+1}):=L^p(\R_+^{N+1}; y^m dx dy)$  proved in \cite{MNS-Singular-Half-Space, Negro-AlphaDirichlet}. We also  recall the upper kernel estimates   proved  in \cite{Negro-Spina-SingularKernel}.

\medskip

In  Section \ref{Section oblique}  we describe a simple change of variable which  allows to simplify the discussion to the model operator \eqref{intro model operator} under Neumann boundary conditions.  In all the subsequent sections  we therefore deal, without any loss of generality, almost exclusively with the model case, leaving to Theorem \ref{complete-sharp}  the  generalization of our results to the general case. 

\medskip

In Section \ref{L2} we write the forms associated with $\mathcal L$ and the adjoint $\mathcal L^*$  in the weighted space  $L^2_c$, where the weighted measure  $y^c\,dz$ takes into account the degeneracy of the operators. We state the main properties of  the forms which allow to get generation of analytic semigroups in $L^2_c$. 

\medskip
In Section \ref{section gradient estimate} we prove gradient estimates for the heat kernels of $\mathcal L$ and $\mathcal L^\ast$. The strategy here relies on the analyticity of the semigroup and the Cauchy formula for holomorphic functions which allow to deduce a pointwise estimates for $D_t p_{\mathcal L}=\mathcal L p_{\mathcal L}$. Then the estimate for the gradient is proved by an interpolation argument  using  the solvability theory of Section \ref{section general operator} for $\mathcal L$  in $L^p\left(\R_+^{N+1}\right)$  for $p>N+1$ and the  Morrey's embeddings. 

\medskip

In Section \ref{Section conservativity} we prove that the semigroups $e^{t\mathcal L}$, $e^{t\mathcal L^\ast }$, initially defined on $L^2_c$, extend also  on $L^1_c$ and we prove their  conservation property.

\medskip

Section \ref{section kernel lower}, which contains the main result of the paper, is devoted to prove the lower kernel  estimates. As explained before the crucial step consists in proving the   ``G-bound'' \eqref{Gbound intro} which allows to deduce  the lower bounds on the diagonal. Here we mainly focus on the range $\frac{|x|}{\sqrt t}, \frac{y}{\sqrt t}\leq 1$  while the lower bound far away from the boundary is deduced  by domination with a uniformly elliptic operator.

\medskip
In Appendix  we  characterize the boundedness  on  $L^p_m$ of a family of  integral operators which are related to our operator.

\bigskip
\noindent\textbf{Notation.} For $N \ge 0$, $\R^{N+1}_+=\{(x,y): x \in \R^N, y>0\}$. We write $\nabla u, D^2 u$ for the gradient and the Hessian matrix of a function $u$ with respect to all $x,y$ variables and $\nabla_x u, D_y u, D_{x_ix_j }u, D_{x_i y} u$ and so on, to distinguish the role of $x$ and $y$.  For $m \in \R$ we consider the measure $y^m dx dy $ in $\R^{N+1}_+$ and  we write $L^p_m(\R_+^{N+1})$, and often only $L^p_m$ when $\R^{N+1}_+$ is understood, for  $L^p(\R_+^{N+1}; y^m dx dy)$. 
$\C^+$ stands for $\{ \lambda \in \C: \Rp \lambda >0 \}$ and, for $|\theta| \leq \pi$, we denote by  $\Sigma_{\theta}$  the open sector $\{\lambda \in \C: \lambda \neq 0, \ |Arg (\lambda)| <\theta\}$.  

We write often $(x,y)$ or $x\cdot y$ to denote the inner product of $\R^N$ and, for $A,B\in\R^{N,N}$ symmetric, $\mbox{Tr }\left(AB\right)=\sum_{i,j}a_{ij}b_{i,j}$. 
Given $a$ and $b$ $\in\R$, $a\wedge b$, $a \vee b$  denote  their minimum and  maximum. We  write $f(x)\simeq g(x)$ for $x$ in a set $I$ and positive $f,g$, if for some $C_1,C_2>0$ 
\begin{equation*}
	C_1\,g(x)\leq f(x)\leq C_2\, g(x),\quad x\in I.
\end{equation*}
Sometimes we also write $C=C(\alpha)$ to emphasize, for a positive  constant $C>0$, its dependence on the parameter $\alpha$.

\bigskip
\bigskip
\noindent\textbf{Acknowledgment.}
The authors are members of the INDAM (``Istituto Nazionale di Alta Matematica'') research group GNAMPA (``Gruppo Nazionale per l’Analisi Matematica, la Probabilità e le loro Applicazioni'').

\section{The operator  in $L^p_{m}$ and upper kernel estimates}\label{section general operator}
We consider the singular elliptic operator
\begin{align}\label{general operator def}
	\mathcal L&=\mbox{Tr }\left(AD^2\right)+\frac{\left(v,\nabla\right)}y,
\end{align}
in the half-space $\R^{N+1}_+=\{(x,y): x \in \R^N, y>0\}$. Here   $A=\left(a_{ij}\right)\in \R^{N+1,N+1}$ is a symmetric and positive definite 
 matrix and we write $\mbox{Tr }\left(AD^2\right)=\sum_{i,j=1}^{N+1}a_{ij}D_{ij}$. The vector  $v=(d,c)\in\R^{N+1}$    satisfies 
\begin{align*}
	d=0\qquad \quad \text{if} \qquad\quad  c= 0
\end{align*}
i.e. $v$ is  oblique with respect to the boundary of $\R^{N+1}_+$.
 Writing
\begin{align*}
	A:=	\left(
	\begin{array}{c|c}
		Q  & { q}^t \\[1ex] \hline
		q& \gamma
	\end{array}\right)
\end{align*}
where $Q\in \R^{N\times N}$,  $q=(q_1, \dots, q_N)\in\R^N$ and $\gamma=a_{N+1, N+1}>0$, the operator  $\mathcal L$ takes the form
\begin{align}\label{def L transf}
		\mathcal L&=\mbox{Tr }\left(QD^2_x\right)+2\left(q,\nabla_xD_y\right)+\gamma D_{yy}+\frac{ d\cdot \nabla_x+cD_y }{y}.
\end{align}

\smallskip 
We work in the weighted spaces $L^p_m(\R_+^{N+1}):=L^p(\R_+^{N+1}; y^m dx dy)$, where $m\in\R$ and we  often write only $L^p_m$ when $\R^{N+1}_+$ is understood. In this section we recall the  results about generation of semigroups, maximal regularity and domain characterization in $L^p_m$  proved in \cite{MNS-Singular-Half-Space, Negro-AlphaDirichlet}.

We    endow $\mathcal L$ with   Neumann or  oblique derivative boundary conditions 
$$\lim_{y\to 0} D_y u=0\quad \text{(if $v=0$)},\qquad\qquad  \lim_{y\to 0}y^{\frac c\gamma}\, v \cdot \nabla u=0\quad \text{(if $c\neq 0$)}.$$

The latter  conditions can be equivalently written in   integral form 
(see \cite[Proposition 4.9]{Negro-AlphaDirichlet} and \cite[Proposition 4.6]{MNS-Sobolev}:  we define accordingly the  weighted Sobolev spaces
\begin{align*}
	W^{2,p}(m)=&\left\{u\in W^{2,p}_{loc}(\R^{N+1}_+):\    D^2u,\  \nabla u,\ u\in L^p_m\right\}
\end{align*}
and we then impose the boundary conditions thus defining
\begin{align}\label{Definition W_v alpha}
\nonumber 	
	W^{2,p}_{v}(m)&{=}\{u \in W^{2,p}(m):\ y^{-1}v\cdot \nabla u \in L^p_m\},\qquad (c\neq 0);\\[2ex]
W^{2,p}_{\mathcal N}(m)&{=}\{u \in W^{2,p}(m):\ y^{-1}D_yu \in L^p_m\},\qquad \hspace{1.8ex}(v= 0),
\end{align}
where, to keep consistency of the notation we write  when $d=0$,\, $W^{2,p}_{(0,c)}(m){=}W^{2,p}_{\mathcal N}(m)$.

\begin{teo}{{\em(\cite[Theorem 5.2]{Negro-AlphaDirichlet} and \cite[Theorem 7.7]{MNS-Singular-Half-Space})}}\label{Teo gen in Lpm}
	Let $v=(d,c)\in\R^{N+1}$ with $d=0$ if $c=0$, and let 
	$$0<\frac{m+1}p<\frac{c}{\gamma}+1.$$   
	Then the operator
	\begin{align*}
		\mathcal L&=\mbox{Tr }\left(AD^2\right)+\frac{\left(v,\nabla\right)}y
	\end{align*}
endowed with domain 	$D(\mathcal L)=W^{2,p}_{v}(m)$	generates a bounded analytic semigroup  in $L^p_m$ which has maximal regularity.
\end{teo}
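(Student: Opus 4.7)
The strategy is to reduce $\mathcal L$ to a normal form via an affine change of variables, handle the resulting uncoupled Bessel-type operator by its explicit heat kernel, and absorb the remaining coupling terms by perturbation.

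By diagonalizing the principal part in $x$ and rescaling $y$ so that $\gamma=1$, one transforms $\mathcal L$ (via a map which is an equivalence of norms on $L^p_m$) into a model operator of the form
\begin{equation*}
\widetilde{\mathcal L} = \Delta_x + 2a\cdot\nabla_x D_y + B_y + \frac{d\cdot\nabla_x}{y}, \qquad B_y = D_{yy} + \frac{c}{y}D_y,
\end{equation*}
with $|a|<1$ (ellipticity), $c>-1$, and the assumption $d=0$ when $c=0$ preserved; crucially, the sharp range $0<(m+1)/p<c/\gamma+1$ is invariant. The boundary condition encoded by $y^{-1}v\cdot\nabla u\in L^p_m$ is likewise preserved.

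In the decoupled case $a=0$, $d=0$, the operator $\Delta_x+B_y$ is a sum of two commuting operators acting on independent variables. The Laplacian on $L^p(\R^N)$ is standard, while for the one-dimensional Bessel operator $B_y$ on $L^p(\R_+;y^m\,dy)$ I would work directly with the explicit heat kernel expressed through modified Bessel functions. Pointwise Gaussian-type upper bounds yield generation of a bounded analytic semigroup, and self-adjointness on $L^2_c$ combined with Stein interpolation (alternatively, Weis's theorem applied to the kernel bound) gives $R$-sectoriality, hence maximal regularity, precisely on the range $0<(m+1)/p<c+1$. The product structure then transfers both properties to $\Delta_x+B_y$ on $L^p_m(\R^{N+1}_+)$.

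To recover the full model $\widetilde{\mathcal L}$, I would treat the mixed term $2a\cdot\nabla_x D_y$ and the singular drift $(d/y)\cdot\nabla_x$ as perturbations of the decoupled operator. The condition $|a|<1$ keeps the full principal part uniformly elliptic, so the mixed second order term has relative bound strictly less than one with respect to $\Delta_x+B_y$; a Kato--Rellich argument preserves analyticity, and preservation of $R$-sectoriality under such perturbations is standard in the Kunstmann--Weis framework. The lower order term $(d/y)\cdot\nabla_x$ shares the singularity of the Bessel drift and can be controlled by weighted Hardy inequalities valid in the given range of $(m+1)/p$. The main obstacle I expect is the weighted analysis of the Bessel operator itself: identifying its domain as the weighted Sobolev space that incorporates $y^{-1}D_y u\in L^p_m$ (or its oblique analogue) and establishing maximal regularity on the full sharp range. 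This demands delicate estimates of the Bessel kernel against the weight $y^m$ near both $y=0$ and $y=\infty$, and it is precisely in this step that the endpoint constraint $(m+1)/p<c/\gamma+1$ emerges.
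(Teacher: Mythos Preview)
This theorem is not proved in the paper; it is quoted from \cite{Negro-AlphaDirichlet,MNS-Singular-Half-Space}. Still, your outline has two genuine gaps that would prevent it from going through as written.

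\medskip
\textbf{The mixed second-order term is not a small perturbation.} You propose to treat $2a\cdot\nabla_x D_y$ as a Kato--Rellich perturbation of $\Delta_x+B_y$, arguing that $|a|<1$ keeps the relative bound below $1$. But $|a|<1$ is an \emph{ellipticity} condition, not a smallness condition in the operator sense. To run Kato--Rellich you would need
\[
\|\nabla_x D_y u\|_{L^p_m}\le K\,\|(\Delta_x+B_y)u\|_{L^p_m}
\]
with $2|a|K<1$. Even in the unweighted, non-degenerate case the Calder\'on--Zygmund constant $K$ equals $1$ only for $p=2$; for general $p$ (and for the weighted Bessel operator) there is no reason for $K$ to be close to $1$, so the argument collapses unless $|a|$ is very small. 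The cited papers do \emph{not} perturb from $a=0$: the model operator $\Delta_x+2a\cdot\nabla_xD_y+B_y$ is treated directly, and the mixed term is part of the principal symbol from the start. The paper itself flags this in the Introduction: ``adding the mixed second order derivatives term is an hard complication of the problem.''

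\medskip
\textbf{The singular tangential drift is not controlled by Hardy.} Weighted Hardy inequalities bound $y^{-1}u$ or $y^{-1}D_yu$ in terms of $D_yu$ or $D_{yy}u$; they say nothing about $y^{-1}\nabla_x u$, which is what the term $(d/y)\cdot\nabla_x$ requires. The route actually used (see Section~\ref{Section oblique}, Lemma~\ref{Isometry action der}) is the shear isometry
\[
Tu(x,y)=u\Big(x-\tfrac{d}{c}\,y,\,y\Big),
\]
which satisfies $T^{-1}(v\cdot\nabla)T=cD_y$ and therefore removes the tangential drift entirely, simultaneously converting the oblique boundary condition $y^{c/\gamma}v\cdot\nabla u\to0$ into the Neumann condition $y^{c/\gamma}D_yu\to0$. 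After this shear and a further linear change in $x$, one lands on the model operator with $d=0$; the remaining analysis is then carried out for that model, not by perturbation from the split case.
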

\begin{cor}\label{Oblique cor1}{{\em(\cite[Corollary 5.3]{Negro-AlphaDirichlet})}}
	Under the assumptions of the previous theorem, the estimate
	\begin{align}\label{elliptic regularity oblique} 
		\| D^2 u\|_{L^p_{m}} +\|y^{-1} v\cdot \nabla u\|_{L^p_{m}}\leq C\| \mathcal Lu\|_{L^p_{m}}
	\end{align}
	\mbox{}\\[-1.5ex]	holds for every $u \in W^{2,p}_{v}(m)$ (if $c=0$ replace $y^{-1} v\cdot \nabla u$ with $y^{-1} D_yu$).
\end{cor}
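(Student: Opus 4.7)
The plan is to deduce the scale-invariant estimate from Theorem \ref{Teo gen in Lpm} by combining the domain characterization with a dilation argument that eliminates the lower-order terms.

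First I would invoke Theorem \ref{Teo gen in Lpm}: since $\mathcal L$ with domain $W^{2,p}_v(m)$ generates an analytic semigroup on $L^p_m$, the operator is closed and the graph norm is equivalent to a natural norm on $W^{2,p}_v(m)$. In particular, reading off from the definition of $W^{2,p}_v(m)$ in \eqref{Definition W_v alpha}, this gives an \emph{a priori} estimate of the form
\begin{align*}
\|D^2 u\|_{L^p_m} + \|y^{-1} v\cdot\nabla u\|_{L^p_m} + \|\nabla u\|_{L^p_m} + \|u\|_{L^p_m}
\le C\bigl(\|\mathcal L u\|_{L^p_m} + \|u\|_{L^p_m}\bigr)
\end{align*}
for every $u\in W^{2,p}_v(m)$ (the case $c=0$ is analogous, with $y^{-1}D_y u$ in place of $y^{-1}v\cdot\nabla u$). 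Here I am using that the graph norm controls all the building blocks of the domain, which is guaranteed by the domain characterization in Theorem \ref{Teo gen in Lpm}.

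Next I would remove the $\|u\|_{L^p_m}$ term on the right-hand side by exploiting the scale invariance of $\mathcal L$. For $\lambda>0$ set $u_\lambda(x,y):=u(\lambda x,\lambda y)$. A direct computation shows that $u_\lambda \in W^{2,p}_v(m)$ whenever $u$ does, and moreover
\begin{align*}
\mathcal L u_\lambda &= \lambda^2 (\mathcal L u)_\lambda, \qquad
D^2 u_\lambda = \lambda^2 (D^2 u)_\lambda, \qquad
y^{-1} v\cdot\nabla u_\lambda = \lambda^2 (y^{-1}v\cdot\nabla u)_\lambda,
\end{align*}
while at the level of norms $\|w_\lambda\|_{L^p_m} = \lambda^{-(N+1+m)/p}\|w\|_{L^p_m}$ for any $w\in L^p_m$. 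Applying the a priori estimate above to $u_\lambda$ and dividing by the common factor $\lambda^{2-(N+1+m)/p}$ yields
\begin{align*}
\|D^2 u\|_{L^p_m} + \|y^{-1}v\cdot\nabla u\|_{L^p_m}
\le C\|\mathcal L u\|_{L^p_m} + C\lambda^{-1}\|\nabla u\|_{L^p_m} + C\lambda^{-2}\|u\|_{L^p_m}.
\end{align*}
Letting $\lambda\to+\infty$ kills the two remaining lower-order terms and produces exactly the claimed inequality \eqref{elliptic regularity oblique}.

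The step I expect to require the most care is verifying that every ingredient of the domain norm is indeed controlled by the graph norm, so that $\|\nabla u\|_{L^p_m}$ can appear in the initial estimate; since $W^{2,p}_v(m)$ is defined through $L^p_m$-integrability of $D^2u$, $\nabla u$, $u$ and $y^{-1}v\cdot\nabla u$, this is immediate from the open mapping theorem once closedness and the identification $D(\mathcal L)=W^{2,p}_v(m)$ are in hand. All remaining computations are the routine scaling identities sketched above, so no further obstacle is expected.
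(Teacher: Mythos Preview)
The paper does not prove this corollary: it is quoted verbatim from an external reference (\cite[Corollary 5.3]{Negro-AlphaDirichlet}) as part of the background material in Section~\ref{section general operator}, so there is no in-paper argument to compare against.

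Your scaling argument is correct and is the standard route to such homogeneous estimates. The domain identification $D(\mathcal L)=W^{2,p}_v(m)$ from Theorem~\ref{Teo gen in Lpm} together with closedness of the generator gives, via the open mapping theorem, the graph-norm equivalence and hence the a~priori bound with $\|u\|_{L^p_m}$ on the right; since $\mathcal L$, $D^2$ and $y^{-1}v\cdot\nabla$ all carry the same homogeneity $\lambda^2$ under $u\mapsto u(\lambda\,\cdot)$, the dilation eliminates the lower-order terms. One cosmetic slip: after dividing by $\lambda^{2-(N+1+m)/p}$, the term $\lambda^{-1}\|\nabla u\|_{L^p_m}$ actually appears on the \emph{left}-hand side (it came from the left of your initial a~priori inequality), not the right; being nonnegative it can simply be dropped, so the conclusion is unaffected.
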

The aim of the paper is to prove  that, if  $\frac c \gamma+1>0$,  then the  heat kernel $p_{{\mathcal L}}$ of $\mathcal L$, written  with respect the measure $y^\frac{c}{\gamma}dz$, satisfies the upper and lower Gaussian estimates
\begin{align}\label{up low section 1}
	 p_{{\mathcal  L}}(t,z_1,z_2)
	\simeq C t^{-\frac{N+1}{2}} y_1^{-\frac{c}{2\gamma}} \left(1\wedge \frac {y_1}{\sqrt t}\right)^{\frac{c}{2\gamma}} y_2^{-\frac{c}{2\gamma}} \left(1\wedge \frac{y_2}{\sqrt t}\right)^{\frac{c}{2\gamma}}\,\exp\left(-\dfrac{|z_1-z_2|^2}{kt}\right),
\end{align}
where $t>0$, $z_1=(x_1,y_1),\ z_2=(x_2,y_2)\in\R^{N+1}_+$ and $C,k$ are some positive constants which may differ from the upper and the lower bounds. 
\begin{os}\label{oss equiv estimate}
	We emphasize that the above estimate can be written  equivalently  as
	\begin{align}\label{upper estimates ver2}
		p_{{\mathcal  L}}(t,z_1,z_2)
		\simeq C t^{-\frac{N+1}{2}}  y_i^{-c} \left(1\wedge \frac{y_i}{\sqrt t}\right)^{c}\,\exp\left(-\dfrac{|z_1-z_2|^2}{kt}\right),\qquad i=1,2
	\end{align}
	for some possibly different constants $C,k>0$. This is a consequence of \cite[Lemma 10.2]{MNS-Caffarelli} which guarantees that for any $\epsilon>0$ one has 
	\begin{align}\label{equiv estimates up low}
		y_1^{-\frac{c}2} \left(1\wedge y_1\right)^{\frac{c}2}\simeq 	y_2^{-\frac{c}2} \left(1\wedge y_2\right)^{\frac{c}2}\,\exp\left(\epsilon|y_1-y_2|^2\right),\qquad \forall y_1,y_2>0
	\end{align}
\end{os}
\medskip

In \cite{Negro-Spina-SingularKernel} we already  proved the  upper kernel estimate of \eqref{up low section 1}: we recall here the main results referring also to Section \ref{L2} for the construction of $\mathcal L$ in the space $L^2_{\frac c\gamma}$ via form methods.

\begin{teo}{{\em(\cite[Theorems 4.18 and 5.3]{Negro-Spina-SingularKernel})}}\label{true.kernel} 
Let $\frac{c}{\gamma}+1>0$.  Then for some  $\theta\in (0,\frac \pi 2)$, the operator  $\mathcal L$	generates  in $L^2_{\frac c\gamma}$ a contractive analytic semigroup  $(e^{t{\mathcal L}})_{t\in \Sigma_{\frac{\pi}{2}-\theta}}$ which is positive for $t>0$. The semigroup   consists of integral operators i.e. there exists $p_{{\mathcal L}}(t,\cdot,\cdot)\in L^\infty(\R^{N+1}_+\times\R^{N+1}_+)$ 
such that  for $t\in\Sigma_{\frac{\pi}{2}-\theta}$, $z_1=(x_1,y_1),\ z_2=(x_2,y_2)\in\R^{N+1}_+$
\begin{align*}
	e^{t{\mathcal  L}}f(z_1)=
	\int_{\R^{N+1}_+}p_{{\mathcal L}}(t,z_1,z_2)f(z_2)\,y_2^{\frac c\gamma}\,dz_2,\quad f\in L^2_{{\frac c\gamma}}.
\end{align*}
Moreover for every $\eps>0$  there exist  constants $C_{\eps}, k_\eps>0$ such that if  $t\in\Sigma_{\frac{\pi}{2}-\theta-\eps}$ then
\begin{align*}
	|p_{{\mathcal  L}}(t,z_1,z_2)|
	\leq C_\epsilon |t|^{-\frac{N+1}{2}} y_1^{-\frac{c}{2\gamma}} \left(1\wedge \frac {y_1}{\sqrt {|t|}}\right)^{\frac{c}{2\gamma}} y_2^{-\frac{c}{2\gamma}} \left(1\wedge \frac{y_2}{\sqrt {|t|}}\right)^{\frac{c}{2\gamma}}\,\exp\left(-\dfrac{|z_1-z_2|^2}{k_\epsilon {|t|}}\right).
\end{align*}
\end{teo}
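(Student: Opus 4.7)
The plan is to build $\mathcal{L}$ on $L^2_{c/\gamma}$ via form methods, then derive the Gaussian upper bound by the classical route of ultracontractivity followed by Davies perturbation, with particular care taken to track the weight.

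First, I would use the change of variables of Section \ref{Section oblique} to reduce the problem to the model operator $\Delta_x + 2a\cdot\nabla_x D_y + B_y$ with Neumann boundary conditions at $y=0$, observing that the reference measure $y^{c/\gamma}dz$ transforms covariantly. Next, I would associate to $\mathcal{L}$ the sesquilinear form
\begin{align*}
\mathfrak{a}(u,v) = \int_{\R^{N+1}_+} A\nabla u \cdot \nabla \overline{v}\, y^{c/\gamma}\,dz
\end{align*}
defined on the natural weighted Sobolev space $H^1_{c/\gamma}$. Integration by parts (using $c/\gamma+1>0$ to control boundary terms) recovers $\mathcal{L}$ formally, and one checks that $\mathfrak{a}$ is densely defined, closed, and sectorial with some half-angle $\theta<\pi/2$. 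Kato--Lions then yields a contractive analytic semigroup on $\Sigma_{\pi/2-\theta}$. Positivity for $t>0$ follows from the Beurling--Deny criterion: the form is real, invariant under $u\mapsto (\Rp u)^+$, and satisfies the first Beurling--Deny condition.

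For the kernel bounds I would first prove a weighted Gagliardo--Nirenberg / Nash-type inequality in $L^2_{c/\gamma}$, of the form
\begin{align*}
\|u\|_{L^2_{c/\gamma}}^{2+4/(N+1)} \leq C\,\mathfrak{a}(u,u)\,\|u\|_{L^1_{c/\gamma}}^{4/(N+1)},
\end{align*}
which by the Nash--Varopoulos argument converts to ultracontractivity $\|e^{t\mathcal{L}}\|_{L^1_{c/\gamma}\to L^\infty}\leq Ct^{-(N+1)/2}$ and hence to an on-diagonal kernel bound. To recover the sharper boundary factors $y_i^{-c/(2\gamma)}(1\wedge y_i/\sqrt{t})^{c/(2\gamma)}$ one localizes near and away from $\{y=0\}$: in the region $y\gtrsim \sqrt{t}$ the weighted measure is locally equivalent to Lebesgue and the standard Euclidean heat bound applies, whereas in the region $y\ll\sqrt{t}$ the effective dimension of $y^{c/\gamma}dz$ changes, and the corresponding Sobolev inequality produces exactly the boundary factor, which can then be interpolated between the two regimes using Lemma 10.2 (quoted in Remark \ref{oss equiv estimate}).

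Off-diagonal Gaussian decay I would obtain by Davies' perturbation method: for Lipschitz $\rho$ with $|\nabla\rho|\leq 1$ and $\lambda>0$ set $\mathcal{L}_\lambda:=e^{-\lambda\rho}\mathcal{L}e^{\lambda\rho}$, show that the associated perturbed form satisfies $\Rp\mathfrak{a}_\lambda(u,u)\geq \frac{1}{2}\Rp\mathfrak{a}(u,u) - k\lambda^2\|u\|_{L^2_{c/\gamma}}^2$, deduce that $\|e^{t\mathcal{L}_\lambda}\|_{2\to 2}\leq e^{k\lambda^2 t}$, combine with the on-diagonal estimate already obtained, and finally optimize over $\rho$ with $\rho(\cdot)=\dist(\cdot,E)$ to produce the Gaussian factor $\exp(-|z_1-z_2|^2/(kt))$. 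Extension to complex times $t\in\Sigma_{\pi/2-\theta-\varepsilon}$ is then a consequence of the analyticity of the semigroup and a Phragmén--Lindelöf / three-lines argument applied to the perturbed semigroup. The hardest step is the weighted Sobolev/Nash inequality together with its localization to the two scaling regimes: getting the sharp exponent $c/(2\gamma)$ in the boundary factor rather than a conservative upper bound requires matching the inequality precisely to the change in effective dimension of the measure $y^{c/\gamma}dz$ across $y\sim\sqrt{t}$.
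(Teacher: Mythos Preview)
This theorem is not proved in the present paper: it is quoted from \cite{Negro-Spina-SingularKernel} (Theorems 4.18 and 5.3 there), and the paper only summarizes the method in the introduction as ``the classical method based upon the equivalence between Gagliardo--Nirenberg type inequalities and ultra-contractivity estimates,'' with the extension to complex times carried out in that reference as well. Your overall strategy (form methods for generation, a weighted Nash/Gagliardo--Nirenberg inequality for ultracontractivity, Davies' perturbation for the off-diagonal Gaussian, and analyticity to pass to the sector) matches that description, so at the level of outline you are aligned with the cited proof.

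One concrete inaccuracy, however, should be flagged. The form you write, $\mathfrak a(u,v)=\int A\nabla u\cdot\nabla\overline v\,y^{c/\gamma}dz$, is the \emph{symmetric} form and does not produce $\mathcal L$ in $L^2_{c/\gamma}$: because of the weight, integration by parts of the mixed term $2a\cdot\nabla_x D_y$ is asymmetric, and the paper's actual form (see \eqref{form a def}) is
\[
\mathfrak a(u,v)=\int_{\R^{N+1}_+}\langle\nabla u,\nabla\overline v\rangle\,y^c\,dz+2\int_{\R^{N+1}_+} D_y u\,a\cdot\nabla_x\overline v\,y^c\,dz,
\]
which is genuinely sectorial, not symmetric; correspondingly $\mathcal L\neq\mathcal L^\ast$ (the adjoint carries an extra drift $2c\,a\cdot\nabla_x/y$ and an oblique boundary condition). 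This does not derail your plan---sectorial forms still give analytic semigroups, and the Beurling--Deny/Davies machinery has non-symmetric versions (as in Ouhabaz)---but you should correct the form and keep in mind that the non-self-adjointness is exactly what forces the separate treatment of $\mathcal L$ and $\mathcal L^\ast$ elsewhere in the paper.
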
 
\bigskip

In view of the previous Theorem   the remaining part of the paper is then  devoted to   prove the  lower bound of \eqref{up low section 1}.

\section{Similarity transformations and the model operator}\label{Section oblique}
In this section we show that some simple change of variables allows to  work, in what follows  and without any loss of generality,  with the model operator
\begin{equation}\label{model operator}
	\Delta_{x} +2a\cdot \nabla_xD_yu+D_{yy}+\frac{c}{y}D_y:=\Delta_x u+2a\cdot \nabla_xD_yu+ B_yu 
\end{equation} under Neumann boundary conditions. Here $B_y=D_{yy}+\frac{c}{y}D_y$ is a Bessel operator and  $a\in\R^N$ satisfies the requirement $|a| <1$ which is equivalent to the ellipticity of the top order coefficients. We briefly describe the strategy referring to \cite[Section 8]{MNS-Singular-Half-Space}  and \cite[Section 5]{Negro-Spina-SingularKernel} for further details. Let 
\begin{align*}
	\mathcal L&=\mbox{Tr }\left(AD^2\right)+\frac{\left(v,\nabla\right)}y=\mbox{Tr }\left(QD^2_x\right)+2\left(q,\nabla_xD_y\right)+\gamma D_{yy}+\frac{ d\cdot \nabla_x+cD_y }{y}
\end{align*}
be the operator in \eqref{general operator def}.
If $d\neq 0$ and $c\neq 0$, a  first transformation allows to transform $\mathcal L$ into a similar operator with $d=0$ and Neumann boundary conditions. This is a consequence of the following lemma which follows by straightforward computation. Let us consider the following isometry of $L^p_{m}$
\begin{align}\label{Tran def}
	T\, u(x,y)&:=u\left(x-\frac d c y,y\right),\quad (x,y)\in\R_+^{N+1}.
\end{align}

\begin{lem}\label{Isometry action der} Let $1< p< \infty$,  $ v=(d,c)\in\R^{N+1}$,  $c\neq 0$. Then  for $u\in W^{2,1}_{loc}\left(\R_+^{N+1}\right)$ 
		\begin{itemize}
			\item[(i)] $\ds T^{-1}\,\left(v\cdot \nabla \right) T\,u=cD_yu$;
			
			\item[(ii)] $\ds 	T^{-1}\,\left(\mbox{Tr }\left(AD^2\right)+\frac{ v\cdot \nabla  }{y}\right) T\,u=\mbox{Tr }\left(\tilde AD^2u\right)+\frac{c}{y}D_y u$.
		\end{itemize}
		Here $\tilde A$  is a uniformly elliptic symmetric matrix defined by 
		\begin{align*}
			\tilde A=
			\left(
			\begin{array}{c|c}
				Q-\frac{2}c d\otimes q{+\frac \gamma{c^2} d\otimes d}  & q^t- \frac{\gamma}c d^t \\[1ex] \hline
				q- \frac{\gamma}c d & \gamma
			\end{array}\right)
		\end{align*} 
		and  $\gamma= q_{N+1,N+1}$.
	\end{lem}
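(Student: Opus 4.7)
Both parts of the lemma reduce to a direct chain-rule computation, so the plan is simply to differentiate $Tu$ explicitly, verify (i), then iterate to obtain the Hessian and match the coefficients of the resulting quadratic form against $\tilde A$. Set $z := x - (d/c)y$ so that $(Tu)(x,y) = u(z,y)$; since $u \in W^{2,1}_{loc}$, the chain rule applies almost everywhere, and no further regularity issues arise.

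For (i), differentiation gives $D_{x_i}(Tu) = (D_{x_i}u)(z,y)$ and $D_y(Tu) = -\frac{1}{c}\,d\cdot\nabla_x u(z,y) + (D_y u)(z,y)$. Forming $v\cdot\nabla(Tu) = d\cdot\nabla_x(Tu) + cD_y(Tu)$, the contributions in $d\cdot\nabla_x u$ cancel exactly, leaving $c(D_y u)(z,y) = c\,T(D_y u)(x,y)$. Since $T$ is invertible with $T^{-1}f(x,y) = f(x+(d/c)y, y)$, applying $T^{-1}$ immediately yields (i).

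For (ii) I would differentiate once more to obtain
\[
D_{x_ix_j}(Tu) = (D_{x_ix_j}u)(z,y), \qquad D_{x_iy}(Tu) = -\frac{1}{c}\sum_k d_k(D_{x_ix_k}u)(z,y) + (D_{x_iy}u)(z,y),
\]
and
\[
D_{yy}(Tu) = \frac{1}{c^2}\sum_{j,k} d_j d_k (D_{x_jx_k}u)(z,y) - \frac{2}{c}\,d\cdot\nabla_x D_y u(z,y) + (D_{yy}u)(z,y).
\]
Substituting into $\mathrm{Tr}(AD^2\,\cdot) = \mathrm{Tr}(QD_x^2\,\cdot) + 2\,q\cdot\nabla_xD_y + \gamma D_{yy}$ and regrouping by the type of derivative of $u$ that appears produces, at $(z,y)$, the expression $\mathrm{Tr}\bigl((Q - \tfrac{2}{c}d\otimes q + \tfrac{\gamma}{c^2}d\otimes d)D_x^2 u\bigr) + 2(q - \tfrac{\gamma}{c}d)\cdot \nabla_x D_y u + \gamma D_{yy} u$, which is exactly $\mathrm{Tr}(\tilde A D^2 u)$. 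Adding $(v\cdot\nabla Tu)/y = (c/y)(D_y u)(z,y)$, as supplied by (i), and applying $T^{-1}$ gives (ii).

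The only real obstacle is a minor indexing convention in the top-left block of $\tilde A$: the calculation produces $-\frac{2}{c}\sum_{i,k} q_i d_k D_{x_ix_k}u$, which I would rewrite as $-\frac{2}{c}\,\mathrm{Tr}((d\otimes q) D_x^2 u)$ only after interpreting $d\otimes q$ in the symmetric sense $\frac{1}{2}(d\otimes q + q\otimes d)$; the symmetry of $D_x^2 u$ absorbs the antisymmetric part. This is also the only reading consistent with $\tilde A$ being symmetric as stated. No analytic subtlety is involved beyond this bookkeeping, and the (uniform) ellipticity of $\tilde A$ follows immediately from the observation that $T$ is a linear change of variables conjugating $\mathrm{Tr}(AD^2)$ to $\mathrm{Tr}(\tilde A D^2)$, which preserves positive definiteness.
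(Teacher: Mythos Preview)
Your proposal is correct and is exactly the ``straightforward computation'' the paper refers to; the paper gives no further details beyond that phrase. Your remark on reading $d\otimes q$ symmetrically is the right way to reconcile the top-left block with the stated symmetry of $\tilde A$, and the ellipticity observation via conjugation by a linear change of variables is also the intended point.
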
{\sc{Proof.}} The proof follows by a straightforward computation.
	\qed
	\medskip

	The latter lemma shows that the operator $\mathcal L$, endowed with the oblique  boundary condition $\lim_{y \to 0} y^{\frac c \gamma}\, v \cdot \nabla u=0$ is unitarily equivalent to  the operator
	\begin{align*}
		\mathcal {\tilde{L}}:=T^{-1}\mathcal L T\,=\mbox{Tr }\left(\tilde AD^2\right)+\frac{c}{y}D_y=\mbox{Tr }\left(\tilde QD^2_x\right)+2\left(\tilde q,\nabla_xD_y\right)+\gamma D_{yy}+\frac{c}{y}D_y  
	\end{align*}
	with the Neumann  boundary condition $\lim_{y \to 0} y^{\frac c \gamma}\, D_yu=0$.  Then through a linear change of variables in the $x$ variables, the term $\mbox{Tr }\left(\tilde QD^2_x\right)$ can be finally transformed into $\gamma\Delta_x$ thus obtaining an operator which is similar to the one in \eqref{model operator}.\\

In view of the above simplification, in the next sections, without any loss of generality, we will work almost exclusively with the model operator  \eqref{model operator}, referring to Theorem \ref{complete-sharp} for the generalization of our results to the general case. \\\

We start in the next section with the $L^2$-theory. In order to control the non self-adjointness of $\mathcal L$, we need to simultaneously study both $\mathcal L$ and $\mathcal L^\ast$.

\section{The operator $\mathcal L=\Delta_{x} +2a\cdot\nabla_xD_y+ B_yu$ in $L^2_{c}$} \label{L2}
Let $c\in\R$, $a=(a_1, \dots, a_N) \in\R^N$ such that $c+1>0$, $|a|<1$. In this section we study the $L^2$ theory related to  the  model degenerate   operator 
\begin{equation}  \label{La}
	\mathcal L :=\Delta_x u+2a\cdot \nabla_xD_yu+ B_yu 
\end{equation}
in $L^2_c$ equipped with Neumann boundary condition.  The requirement $|a| <1$ is equivalent to the ellipticity of the top order coefficients.

We use the Sobolev space $H^{1}_{c}:=\{u \in L^2_{c} : \nabla u \in L^2_{c}\}$ equipped with the inner product
\begin{align*}
	\left\langle u, v\right\rangle_{H^1_{c}}:= \left\langle u, v\right\rangle_{L^2_{c}}+\left\langle \nabla u, \nabla v\right\rangle_{L^2_c}.
\end{align*}

The condition $c+1>0$ assures, by \cite[Theorem 4.9]{MNS-Sobolev}, that the set 
\begin{equation} \label{defC}
	\mathcal{C}:=\left \{u \in C_c^\infty \left(\R^N\times[0, \infty)\right), \ D_y u(x,y)=0\  {\rm for} \ y \leq \delta\ {\rm  and \ some\ } \delta>0\right \},
\end{equation}
is dense in $H^1_c$. Moreover,  by \cite[Remark 4.14]{MNS-Sobolev}, if a function $u\in H^1_c$ has  support in $\R^N\times[0,b]$, then there exists a sequence $\left(u_n\right)_{n\in\N}\in\mathcal C$  such that $ \mbox{supp }u_n\subseteq \R^N\times[0,b]$  and  $u_n\to u$ in $H^1_c$.

We consider the  form  in $L^2_{c}$ 
\begin{align}\label{form a def}
	\nonumber \mathfrak{a}(u,v)
	&:=
	\int_{\R^{N+1}_+} \langle \nabla u, \nabla \overline{v}\rangle\,y^{c} dx\,dy+2\int_{\R^{N+1}_+} D_yu\, a\cdot\nabla_x\overline{v}\,y^{c} dx\,dy,\\[1ex]
	D(\mathfrak{a})&:=H^1_{c}
\end{align}
and its adjoint $\mathfrak{a}^*(u,v)=\overline{\mathfrak{a}(v,u)}$  
\begin{align*}
	\mathfrak{a^*}(u,v)=\overline{\mathfrak{a}(v,u)}
	&:=
	\int_{\R^{N+1}_+} \langle \nabla u, \nabla \overline{v}\rangle\,y^{c} dx\,dy+2\int_{\R^{N+1}_+} a\cdot \nabla_x u\,D_y\overline{v}\,y^{c} dx\,dy.
\end{align*}

\begin{prop}{{\em (\cite[Proposition 2.1]{Negro-Spina-SingularKernel})}} \label{prop-form}
	The forms $\mathfrak{a}$, $\mathfrak{a^*}$ are continuous, accretive and   sectorial.
\end{prop}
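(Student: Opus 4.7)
The plan is to verify the three properties directly from the definition of $\mathfrak{a}$, exploiting only the hypothesis $|a|<1$; the claim for $\mathfrak{a}^{\ast}$ then follows automatically since $\mathfrak{a}^{\ast}(u,v)=\overline{\mathfrak{a}(v,u)}$ has identical structure (the real part is the same and the imaginary part just changes sign).

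For continuity, I would apply Cauchy--Schwarz to each of the two integrals that define $\mathfrak{a}$, together with the pointwise bound $|a\cdot\nabla_x v|\leq |a|\,|\nabla_x v|$, to obtain
\begin{align*}
|\mathfrak{a}(u,v)|\leq \|\nabla u\|_{L^2_c}\|\nabla v\|_{L^2_c}+2|a|\,\|D_yu\|_{L^2_c}\|\nabla_x v\|_{L^2_c}\leq (1+2|a|)\,\|u\|_{H^1_c}\|v\|_{H^1_c},
\end{align*}
which is continuity on $H^1_c=D(\mathfrak{a})$.

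For accretivity I would compute the real part of $\mathfrak{a}(u,u)$. Since $a$ is real, $\overline{a\cdot\nabla_x u}=a\cdot\nabla_x\overline{u}$, so the mixed term equals $2\,\textrm{\emph{Re}\,}\!\int D_yu\,\overline{a\cdot\nabla_x u}\,y^c\,dx\,dy$, which by Cauchy--Schwarz and the AM--GM inequality is bounded below by $-|a|\int(|D_yu|^2+|\nabla_x u|^2)\,y^c\,dx\,dy=-|a|\,\|\nabla u\|_{L^2_c}^{2}$. Combining with the first (non-negative) integral in $\mathfrak{a}(u,u)$ yields
\begin{align*}
\textrm{\emph{Re}\,}\mathfrak{a}(u,u)\geq (1-|a|)\,\|\nabla u\|_{L^2_c}^{2}\geq 0,
\end{align*}
where positivity (in fact coercivity of the gradient part) is precisely where $|a|<1$ enters.

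For sectoriality, the only contribution to the imaginary part comes from the mixed term, and the same pointwise estimate gives $|\textrm{\emph{Im}\,}\mathfrak{a}(u,u)|\leq 2|a|\int|D_yu|\,|\nabla_x u|\,y^c\,dx\,dy\leq |a|\,\|\nabla u\|_{L^2_c}^{2}$. Dividing by the lower bound on the real part produces
\begin{align*}
|\textrm{\emph{Im}\,}\mathfrak{a}(u,u)|\leq \frac{|a|}{1-|a|}\,\textrm{\emph{Re}\,}\mathfrak{a}(u,u),
\end{align*}
so $\mathfrak{a}$ is sectorial with numerical range contained in a sector of angle $\arctan\!\big(|a|/(1-|a|)\big)<\pi/2$. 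There is no real obstacle here: the proof is entirely elementary, with the only subtle point being that the weight $y^{c}$ plays no active role (it appears identically in every integral) so the whole argument reduces to a Cauchy--Schwarz/AM--GM bookkeeping controlled by the ellipticity condition $|a|<1$.
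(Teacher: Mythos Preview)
Your argument is correct and is exactly the standard verification one expects; note that the paper does not actually reproduce a proof here but simply cites \cite[Proposition~2.1]{Negro-Spina-SingularKernel}, and your computation recovers precisely the sector angle $\arctan\big(|a|/(1-|a|)\big)$ that the paper uses in Proposition~\ref{generation L2}.
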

%
%
%
We define the operators $\mathcal L$ and $\mathcal L^*$ associated respectively to the forms $\mathfrak{a}$ and $\mathfrak{a}^*$  by
\begin{align} \label{BesselN}
	\nonumber D( \mathcal L)&=\{u \in H^1_{c}: \exists  f \in L^2_{c} \ {\rm such\ that}\  \mathfrak{a}(u,v)=\int_{\R^{N+1}_+} f \overline{v}y^{c}\, dz\ {\rm for\ every}\ v\in H^1_{c}\},\\  \mathcal Lu&=-f;
\end{align}
\begin{align} \label{adjoint}
	\nonumber D( \mathcal L^*)&=\{u \in H^1_{c}: \exists  f \in L^2_{c} \ {\rm such\ that}\  \mathfrak{a}^*(u,v)=\int_{\R^{N+1}_+} f \overline{v}y^{c}\, dz\ {\rm for\ every}\ v\in H^1_{c}\},\\  \mathcal L^*u&=-f.
\end{align}
If $u,v$ are smooth functions with compact support in the closure of $\R_+^{N+1}$ (so that they do not need to vanish on the boundary), it is easy to see integrating by parts that $$-\mathfrak a (u,v)= \langle \Delta_x u+2a\cdot \nabla_xD_yu+ B_yu, \overline v\rangle_{L^2_c}  $$
if $\ds \lim_{y \to 0} y^c D_yu=0$. This means that 
$\mathcal L$ is the operator 
$$\Delta_x +2a\cdot \nabla_xD_y+ B_y$$
 with Neumann boundary conditions at $y=0$. On the other hand 
$$-\mathfrak a^* (u,v)= \left\langle \Delta_x u+2a\cdot \nabla_xD_yu+2c\frac {a\cdot \nabla_x u}{y}+ B_yu, \overline v\right\rangle_{L^2_c}  $$
if $\ds \lim_{y \to 0} y^c \left ( 2a\cdot \nabla_x u+D_yu\right )=0$ and therefore the adjoint  of $\mathcal L$ is the operator
\begin{align*}
	\mathcal L^*=\Delta_x +2a\cdot \nabla_xD_y+2c\frac {a\cdot \nabla_x u}{y}+ B_y
\end{align*} with the above oblique condition at $y=0$.  We refer the reader to \cite{MNS-Sobolev} for further details about the Sobolev spaces $H^1_c$ and their boundary conditions at $y=0$.
\begin{prop}{{\em (\cite[Proposition 2.2]{Negro-Spina-SingularKernel})}}\label{generation L2}
	$\mathcal L$ and $\mathcal L^*$  generate contractive analytic semigroups $e^{t \mathcal L}$, $e^{t \mathcal L^*}$, $t\in\Sigma_{\frac{\pi}{2}-\arctan \frac{|a|}{1-|a|}}$,  in $L^2_{c}$. Moreover the semigroups $(e^{t\mathcal L})_{t \geq 0}, (e^{t\mathcal L^*})_{t \geq 0}$ are positive and $L^p_c$-contractive for $1 \leq p \leq \infty$.
\end{prop}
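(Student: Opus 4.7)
Both $\mathfrak a$ and $\mathfrak a^*$ are continuous, accretive and sectorial on $H^1_c\subset L^2_c$ by Proposition \ref{prop-form}. The plan is to invoke the standard theory of continuous sectorial sesquilinear forms (see e.g.\ Ouhabaz's monograph): each form is associated with an $m$-sectorial operator whose negative generates a bounded analytic semigroup, contractive on an open sector whose opening is determined by the numerical range of the form. This reduces the proof to three concrete tasks: fixing the angle of the sector, deducing contractivity on the whole sector from accretivity, and checking the Beurling--Deny/Ouhabaz invariance criteria for positivity and $L^p$-contractivity.

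\textbf{Sector angle.} For $u\in H^1_c$ a Cauchy--Schwarz and Young estimate on the mixed term yields
\begin{equation*}
	\left|\,2\int_{\R^{N+1}_+}D_yu\,(a\cdot\nabla_x\overline u)\,y^c\,dz\,\right|\leq 2|a|\,\|D_yu\|_{L^2_c}\|\nabla_xu\|_{L^2_c}\leq |a|\,\|\nabla u\|_{L^2_c}^2,
\end{equation*}
so $\Rp\mathfrak a(u,u)\geq(1-|a|)\|\nabla u\|_{L^2_c}^2$ and $|\Ip\mathfrak a(u,u)|\leq |a|\,\|\nabla u\|_{L^2_c}^2$. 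Consequently $|\Ip\mathfrak a(u,u)|\leq\tan\theta\,\Rp\mathfrak a(u,u)$ with $\theta=\arctan\frac{|a|}{1-|a|}$, which is precisely the sectoriality angle. The form theory then delivers an analytic semigroup on $\Sigma_{\pi/2-\theta}$, contractive there since $\mathfrak a$ is accretive. Applying the same bound to $\mathfrak a^*$ (which has the same structure with the roles of $D_y$ and $a\cdot\nabla_x$ swapped) produces the identical sector for $\mathcal L^*$.

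\textbf{Positivity and $L^p$-contractivity.} I would apply Ouhabaz's invariance criterion in its Beurling--Deny form. Using the density of the class $\mathcal C$ in \eqref{defC} in $H^1_c$, the pointwise chain rule for Sobolev functions (which is insensitive to the weight $y^c\,dz$ since this is a locally finite measure on $\R^{N+1}_+$) shows that the truncations $u\mapsto u^+$ and $u\mapsto u\wedge 1$ preserve $D(\mathfrak a)=H^1_c$. For real $u\in H^1_c$ the key algebraic identities
\begin{equation*}
	\mathfrak a(u^+,u^-)=0,\qquad \mathfrak a(u\wedge 1,(u-1)^+)=0,
\end{equation*}
hold: both the Dirichlet term and the mixed term vanish pointwise a.e.\ by disjointness of the supports of $\nabla u^+$ and $\nabla u^-$, and similarly for the truncation pair. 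The same identities hold verbatim for $\mathfrak a^*$. Applying Ouhabaz's criterion, one obtains positivity and $L^\infty$-contractivity of both $e^{t\mathcal L}$ and $e^{t\mathcal L^*}$; interpolation with the $L^2$-contractivity already provided by accretivity, together with the duality $(e^{t\mathcal L})^*=e^{t\mathcal L^*}$, gives $L^p$-contractivity for every $p\in[1,\infty]$. The main obstacle I anticipate is just the bookkeeping around the truncation/density step; the sectoriality computation itself is essentially forced by $|a|<1$.
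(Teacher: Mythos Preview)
Your proposal is correct and follows the standard route: the paper does not actually prove this proposition here but cites it from \cite[Proposition~2.2]{Negro-Spina-SingularKernel}, where the argument is exactly the one you outline---sectoriality of $\mathfrak a$ with angle $\arctan\frac{|a|}{1-|a|}$ via the estimate $|\Ip\mathfrak a(u,u)|\le|a|\|\nabla u\|_{L^2_c}^2\le\frac{|a|}{1-|a|}\Rp\mathfrak a(u,u)$, followed by the Beurling--Deny/Ouhabaz invariance criteria using that $\nabla u^+$, $\nabla u^-$ (respectively $\nabla(u\wedge1)$, $\nabla(u-1)^+$) have disjoint supports. There is nothing to add.
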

	%
%

The previous proposition is a special case of  Theorem \ref{Teo gen in Lpm} which  shows that the  semigroups $e^{t \mathcal L}$, $e^{t \mathcal L^*}$ extrapolate to the weighted spaces $L^p\left(\R^{N+1}_+, y^mdxdy\right)$, $m\in\R$, when   $0<\frac{m+1}p<c+1$, and which describes  the domain of their generators as respectively the weighted Sobolev spaces defined in \eqref{Definition W_v alpha}
\begin{align*}
	W^{2,p}_{\mathcal N}(m),\qquad \qquad 	W^{2,p}_{v}(m),\quad v=(2a,1).
\end{align*}

\medskip

Finally we collect in the following proposition some invariance properties of $\mathcal L$, $e^{t\mathcal L}$ and of of $\mathcal L^\ast$, $e^{t\mathcal L^\ast}$ whose proofs follow after an easy computation.

\begin{prop}\label{Prop scaling} The following properties hold.
	\begin{itemize}
		\item[(i)] The scale homogeneity of $\mathcal L$ is  $2$:
		\begin{align}\label{scale.op}
			s^2\mathcal L=I_s^{-1}\mathcal L I_s, \qquad I_su(x,y)=u(sx,sy), \qquad s>0. 
		\end{align}
\item[(ii)]   $\mathcal L$ is invariant under translation in the $x$-direction:
\begin{align}\label{scale.op}
	\mathcal L=T_{x_0}^{-1}\mathcal L T_{x_0}, \qquad T_{x_0}u(x,y)=u(x+x_0,y), \qquad x_0\in\R^N. 
\end{align}
	\end{itemize}	Moreover the semigroup $e^{t\mathcal L}$ generated by $\mathcal L$  in $L^2_{c}$
		satisfies for any $t\in\Sigma_{\frac{\pi}{2}-\arctan \frac{|a|}{1-|a|}}$
		\begin{equation}\label{scale.semi}
			e^{s^2 t\mathcal L}=I_{s}^{-1}e^{t\mathcal L}I_s,\quad s>0,\qquad e^{ t\mathcal L}=T_{x_0}^{-1}e^{t\mathcal L}T_{x_0},\quad x_0\in\R^N.  
		\end{equation}
The same results  hold for the adjoint operator $\mathcal L^\ast$ and its generated semigroup. 
\end{prop}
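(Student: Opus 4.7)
The plan is to verify the two operator identities by direct chain rule on a dense core of smooth functions, then transfer them to the semigroup level via standard uniqueness of semigroup generators and analytic continuation in the time variable.

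For (i), given $u$ in the dense class $\mathcal C$ of \eqref{defC}, set $v = I_s u$, so that $v(x,y) = u(sx,sy)$. The chain rule gives
\begin{align*}
	\nabla v(x,y) &= s(\nabla u)(sx,sy), \qquad D^2 v(x,y) = s^2 (D^2 u)(sx,sy),\\
	\frac{1}{y}D_y v(x,y) &= \frac{s}{y}(D_y u)(sx,sy) = s^2 \cdot \frac{1}{sy}(D_y u)(sx,sy).
\end{align*}
Summing the terms of $\mathcal L$, every one acquires an exact factor $s^2$ relative to its value at $(sx,sy)$, so that $\mathcal L(I_s u)(x,y) = s^2 (\mathcal L u)(sx,sy) = s^2 I_s(\mathcal L u)(x,y)$, which is (i). For (ii), the coefficients of $\mathcal L$ are independent of $x$, so each term manifestly commutes with $T_{x_0}$. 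Since $I_s$ and $T_{x_0}$ are bounded isomorphisms of $L^2_c$ (respectively scaling the norm by $s^{-(N+1+c)/2}$ and isometric), and map the Sobolev space $W^{2,2}_{\mathcal N}(c)$ into itself while preserving the integral boundary condition at $y=0$, both identities extend from $\mathcal C$ to the full domain $D(\mathcal L)$.

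To promote these to the semigroup identities \eqref{scale.semi}, define $U(t) := I_s^{-1} e^{t\mathcal L} I_s$. This is a strongly continuous analytic semigroup on $L^2_c$ whose generator, computed from (i), is $I_s^{-1} \mathcal L I_s = s^2 \mathcal L$ on $D(\mathcal L)$; by uniqueness of the generator we obtain $U(t) = e^{t s^2 \mathcal L}$, which rewrites as $e^{s^2 t \mathcal L} = I_s^{-1} e^{t \mathcal L} I_s$. The same argument with $T_{x_0}$ gives the translation identity. The identities then persist for complex $t\in\Sigma_{\pi/2-\arctan(|a|/(1-|a|))}$ by uniqueness of the analytic extension in $t$.

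For the adjoint statement, we apply the same procedure to $\mathcal L^\ast = \Delta_x + 2a\cdot\nabla_x D_y + 2c\,\frac{a\cdot \nabla_x}{y} + B_y$: the extra first order term $2c\,\frac{a\cdot\nabla_x u}{y}$ is $x$-independent in its coefficient and, under $I_s$, picks up the same overall factor $s^2$, since $\nabla_x$ contributes $s$ and $1/y$ contributes $s$ as in the computation above. No genuine obstacle is anticipated; the one subtlety worth flagging is checking that the similarity $I_s$ preserves the boundary condition encoded in $D(\mathcal L)$, which reduces to noting that the integral form of the boundary condition in \cite{Negro-AlphaDirichlet, MNS-Sobolev} transforms homogeneously under dilations.
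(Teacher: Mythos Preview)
Your proof is correct and is exactly the ``easy computation'' the paper alludes to without spelling out: the chain rule identities on a dense class, then transfer to the semigroup via uniqueness of the generator and analytic continuation in $t$. There is nothing to add.
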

\medskip
By Theorem \ref{true.kernel} , the semigroup $e^{t{\mathcal L}}$,  $t\in\Sigma_{\frac{\pi}{2}-\arctan \frac{|a|}{1-|a|}}$,   consists of integral operators and its heat kernel  $p_{{\mathcal L}}$, written with respect to the measure $y^cdz$, satisfies the upper estimate
\begin{align}\label{upper estimates La}
	0\leq p_{{\mathcal  L}}(t,z_1,z_2)
	\leq C t^{-\frac{N+1}{2}} y_1^{-\frac{c}{2}} \left(1\wedge \frac {y_1}{\sqrt t}\right)^{\frac{c}{2}} y_2^{-\frac{c}{2}} \left(1\wedge \frac{y_2}{\sqrt t}\right)^{\frac{c}{2}}\,\exp\left(-\dfrac{|z_1-z_2|^2}{k t}\right).
\end{align}
which is valid for any $t>0$, $z_1,z_2\in\R^{N+1}_+$ and for some positive constant $C,k>0$. By remark \ref{oss equiv estimate} the above estimate can be written  equivalently  as
\begin{align}\label{upper estimates ver2 La}
	p_{{\mathcal  L}}(t,z_1,z_2)
	\leq  C t^{-\frac{N+1}{2}}  y_i^{-c} \left(1\wedge \frac{y_i}{\sqrt t}\right)^{c}\,\exp\left(-\dfrac{|z_1-z_2|^2}{kt}\right),\qquad i=1,2
\end{align} 
The latter bounds extends also, for some possibly different $C,k>0$, for $t$ in  any sub-sector $\Sigma_{\frac{\pi}{2}-\arctan \frac{|a|}{1-|a|}-\eps}$.
Moreover,  using  the invariance properties of Proposition \ref{Prop scaling}, we have that  for any $t\in\Sigma_{\frac\pi 2-\arctan\frac{|a|}{1-|a|}}$, $z_1,\ z_2\in\R^{N+1}_+$, 
\begin{align}\label{scaling}
	p_{\mathcal{L}}(s^2t,z_1,z_2)=s^{-(N+1+c)}\,p_{\mathcal{L}}(t,s^{-1}z_1,s^{-1}{z_2}),\qquad s>0,
\end{align}
and
\begin{align}\label{invariance x-translation}
	p_{\mathcal{L}}(t,z_1+x_0,z_2+x_0)=p_{\mathcal{L}}(t,z_1,z_2),\qquad x_0\in\R^N,
\end{align}
where for $z=(x,y)\in\R^{N+1}_+$, we wrote,  with a little abuse of notation,  $z+x_0=\left(x+x_0,y\right)$ to denote the translation in the $x_0$-direction. \\

The same results  hold for the adjoint operator $\mathcal L^\ast$ and its generated semigroup. In particular, if  $p_{{\mathcal L^\ast}}$ is the heat kernel of $\mathcal L^\ast$ , written with respect to the measure $y^cdz$, then by definition and by the positivity of the semigroups, the following relations hold for any $z_1,\ z_2\in\R^{N+1}_+$:
\begin{align}
		\label{adjoint-ker}
\nonumber 	p_{{\mathcal L^\ast}}(t,z_1,z_2)&=\overline{p_{{\mathcal L}}(t,z_2,z_1)},\qquad\qquad  \forall t\in\Sigma_{\frac\pi 2-\arctan\frac{|a|}{1-|a|}},\\[1ex]
	p_{{\mathcal L^\ast}}(t,z_1,z_2)&=p_{{\mathcal L}}(t,z_2,z_1),\qquad\qquad  \forall t>0.
\end{align}

\section{Gradient estimates}\label{section gradient estimate}
In this section we prove gradient estimates for the heat kernels of $\mathcal L$ and $\mathcal L^\ast$. For simplicity in what follows we only work with $\mathcal L$ but all the  results hold with similar proof also for $\mathcal L^\ast$. 

We start by stating  the next lemma in which we specialize Theorem \ref{Teo gen in Lpm} and Corollary \ref{Oblique cor1} to  the case of  $L^p:=L^p\left(\R^{N+1}_+\right)$ with Lebesgue measure. In what follows, recalling  \eqref{Definition W_v alpha}, we write 
\begin{align*}
	W^{2,p}_{\mathcal{N}}:=W^{2,p}_{\mathcal{N}}(0)=\{u \in W^{2,p}\left(\R^{N+1}_+\right):\ y^{-1}D_yu \in L^p\}.
\end{align*}
Note that from Morrey's embedding, if $p>N+1$, then $W^{2,p}_{\mathcal{N}}\hookrightarrow C^1_b\left(\overline{\R^{N+1}_+}\right)$.
\begin{lem}\label{lem stime Lb inf}
	Let $0<\frac{1}p<c+1$. 	Then $\mathcal L$ endowed with domain 	$D\left(\mathcal L\right)=W^{2,p}_{\mathcal{N}}$	generates a bounded analytic semigroup  in $L^p$. Moreover there exists $C>0$ such that for $\lambda>0$ and  $u \in W^{2,p}_{\mathcal{N}}$ one has 
	$$
	\lambda\|u\|_p+\lambda^\frac{1}{2}\|\nabla u\|_p+\|D^2 u\|_p \leq C\|\lambda u- \mathcal Lu\|_{p}.
	$$
\end{lem}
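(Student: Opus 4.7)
The generation of a bounded analytic semigroup on $L^p$ with domain $W^{2,p}_{\mathcal{N}}$ is the direct application of Theorem \ref{Teo gen in Lpm} to the model operator (recall that this corresponds, in the general notation, to $v=(0,c)$, $\gamma=1$). With $m=0$ the compatibility condition reads $0<1/p<c+1$, which is precisely our hypothesis, so the first assertion is immediate. Likewise, Corollary \ref{Oblique cor1} specialised to $m=0$ yields the elliptic regularity estimate
\begin{equation*}
\|D^2u\|_p+\|y^{-1}D_yu\|_p\leq C\|\mathcal Lu\|_p,\qquad u\in W^{2,p}_{\mathcal{N}}.
\end{equation*}

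It remains to combine these two facts with a standard interpolation argument to obtain the scaled resolvent estimate. By analyticity of the semigroup on $L^p$, the resolvent set of $\mathcal L$ contains $(0,\infty)$ and there exists $C>0$ such that $\lambda\|u\|_p\leq C\|\lambda u-\mathcal Lu\|_p$ for every $\lambda>0$ and every $u\in W^{2,p}_{\mathcal{N}}$. Using the triangle inequality one then also has $\|\mathcal Lu\|_p\leq (1+C)\|\lambda u-\mathcal Lu\|_p$, which combined with the elliptic regularity inequality above gives
\begin{equation*}
\lambda\|u\|_p+\|D^2u\|_p\leq C'\|\lambda u-\mathcal Lu\|_p.
\end{equation*}

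For the gradient term, since any $u\in W^{2,p}_{\mathcal{N}}\subseteq W^{2,p}(\R^{N+1}_+)$ admits a standard extension to $W^{2,p}(\R^{N+1})$, the classical Gagliardo--Nirenberg interpolation inequality yields
\begin{equation*}
\|\nabla u\|_p\leq C\|u\|_p^{1/2}\|D^2 u\|_p^{1/2}.
\end{equation*}
Multiplying by $\lambda^{1/2}$ and applying Young's inequality $ab\leq \tfrac12 a^2+\tfrac12 b^2$ with $a=(\lambda\|u\|_p)^{1/2}$, $b=\|D^2u\|_p^{1/2}$ we obtain
\begin{equation*}
\lambda^{1/2}\|\nabla u\|_p\leq C\bigl(\lambda\|u\|_p+\|D^2u\|_p\bigr)\leq C''\|\lambda u-\mathcal Lu\|_p,
\end{equation*}
and summing the three contributions gives the claimed bound. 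The only step that requires any care is the interpolation inequality on the half-space, but this is standard once one invokes an extension operator, so no real obstacle arises.
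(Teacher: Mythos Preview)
Your proof is correct and follows essentially the same route as the paper: invoke Theorem \ref{Teo gen in Lpm} and Corollary \ref{Oblique cor1} with $m=0$ to get the generation result and the bounds on $\lambda\|u\|_p$ and $\|D^2u\|_p$, then use the multiplicative interpolation inequality $\|\nabla u\|_p^2\leq C\|u\|_p\|D^2u\|_p$ to control the gradient term. The only difference is that you spell out the Young-inequality step and the justification of the interpolation inequality on the half-space, which the paper leaves implicit.
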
 
{\sc Proof.} Recalling  Theorem \ref{Teo gen in Lpm} and Corollary \ref{Oblique cor1},   $(\mathcal L,W^{2,p}_{\mathcal{N}}) $ generates a bounded semigroup on $L^p\left(\R^{N+1}_+\right)$ and moreover we have for $u\in W^{2,p}_{\mathcal{N}}$, $\lambda >0$
\begin{align*}
	\lambda\|u\|_p\leq C\left( \|\lambda u-\mathcal L u\|_p\right), \quad \|D^2u\|_p \leq C\|\mathcal Lu\|_p\leq C(\lambda \|u\|_p+\|\lambda u-\mathcal Lu\|_p).
\end{align*}
The estimate of the gradient term follows by the interpolative inequality $\|\nabla u\|_p^2 \leq C\|u\|_p \|D^2u\|_p$.
\qed

The following regularity result  is an immediate consequence of the holomorphy of the semigroup $e^{t \mathcal L}$.

\begin{lem} \label{regpb}
For every fixed $z_1,\ z_2\in\R^{N+1}_+$ the kernel $p_\mathcal L(t, z_1, z_2)$ is holomorphic with respect to $t\in\Sigma_{\frac\pi 2-\arctan\frac{|a|}{1-|a|}}$  and  $p(t, \cdot , z_2) \in C^1_b\left(\overline{\R^{N+1}_+}\right)$.
\end{lem}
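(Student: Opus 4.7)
My plan is to reduce the regularity of $p_{\mathcal L}$ in both the time and the space variables to the $L^p$-theory of Lemma \ref{lem stime Lb inf} via the semigroup law for the kernel.

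First I would choose an exponent $p$ large enough so that both $p>N+1$ and $\frac{1}{p}<c+1$ hold; this is possible since $c+1>0$. By Lemma \ref{lem stime Lb inf}, $\mathcal L$ with domain $W^{2,p}_{\mathcal{N}}$ generates a bounded analytic semigroup on $L^p(\R^{N+1}_+)$, and by Morrey's embedding $W^{2,p}_{\mathcal{N}} \hookrightarrow C^1_b(\overline{\R^{N+1}_+})$. Given $z_2\in\R^{N+1}_+$ and $t_0\in\Sigma_{\frac\pi2-\arctan\frac{|a|}{1-|a|}}$, I pick a real $s_0>0$ small enough that $t_0-s_0$ still lies in the sector, and set $g(z):=p_{\mathcal L}(s_0,z,z_2)$. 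The upper kernel estimate of Theorem \ref{true.kernel} shows that $g\in L^p(\R^{N+1}_+)$: the factor $y^{-c/2}(1\wedge y/\sqrt{s_0})^{c/2}$ is uniformly bounded in $y>0$ (one verifies this separately for $c\geq 0$ and $c<0$), while the Gaussian factor provides integrability against Lebesgue measure at infinity. The semigroup law then yields
$$
p_{\mathcal L}(t,z_1,z_2)=\bigl(e^{(t-s_0)\mathcal L}g\bigr)(z_1)
$$
for $t$ in a complex neighborhood of $t_0$.

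Since $(e^{\tau\mathcal L})_{\tau\in\Sigma}$ is an analytic semigroup on $L^p$, the map $\tau\mapsto e^{\tau\mathcal L}g$ is holomorphic as an $L^p$-valued function, and its derivative $\mathcal L e^{\tau\mathcal L}g$ is again $L^p$-valued and holomorphic; hence $\tau\mapsto e^{\tau\mathcal L}g$ is holomorphic with values in $D(\mathcal L)=W^{2,p}_{\mathcal{N}}$ equipped with its graph norm, and \emph{a fortiori} into $C^1_b(\overline{\R^{N+1}_+})$ through the Morrey embedding. Composing with the continuous point evaluation $h\mapsto h(z_1)$ on $C^1_b$ gives holomorphy of $t\mapsto p_{\mathcal L}(t,z_1,z_2)$ at $t_0$; since $t_0$ was arbitrary, holomorphy on the whole sector follows. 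The $C^1_b$-regularity of $p_{\mathcal L}(t,\cdot,z_2)$ for fixed $t$ is obtained simultaneously from $e^{(t-s_0)\mathcal L}g\in W^{2,p}_{\mathcal{N}}\hookrightarrow C^1_b$.

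The main technical point I anticipate is the verification that $g\in L^p$ against Lebesgue measure (rather than against $y^c\,dz$): one has to check carefully the behaviour of the upper bound near the degenerate boundary $y=0$, where the factor $y^{-c/2}(1\wedge y/\sqrt{s_0})^{c/2}$ looks singular but collapses to the bounded quantity $s_0^{-c/4}$ for $y\leq\sqrt{s_0}$, regardless of the sign of $c$. Once this is established, the conclusions are immediate from standard holomorphic-semigroup theory, the consistency of the kernel representation on $L^p$ and $L^2_c$, and the embedding $W^{2,p}_{\mathcal N}\hookrightarrow C^1_b$.
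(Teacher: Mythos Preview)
Your argument is correct and is essentially the paper's proof: pick $p>\max\{N+1,(c+1)^{-1}\}$, show $p_{\mathcal L}(s_0,\cdot,z_2)\in L^p$ from the upper bound, and conclude via the semigroup law, analyticity of $(e^{\tau\mathcal L})$ on $L^p$, and the embedding $W^{2,p}_{\mathcal N}\hookrightarrow C^1_b$. One minor slip: for $c<0$ the factor $y^{-c/2}(1\wedge y/\sqrt{s_0})^{c/2}$ is \emph{not} uniformly bounded in $y>0$ (it grows like $y^{|c|/2}$ for large $y$), though this is harmless since the Gaussian absorbs it; the paper sidesteps the issue by invoking the equivalent asymmetric bound \eqref{upper estimates ver2 La} with the weight placed on the fixed variable $y_2$, which makes $g\in L^p$ immediate.
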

{\sc Proof.}  Let us fix  $p>\max\{(c+1)^{-1}, N+1\}$ and let us work, using Lemma \ref{lem stime Lb inf},  in $L^p:=L^p\left(\R^{N+1}_+\right)$. If $s>0$, by \eqref{upper estimates ver2 La}, $p_\mathcal L(s, \cdot, z_2) \in L^p$ and then,  since the semigroup is analytic, $e^{t \mathcal L} p_\mathcal L$ belongs to the domain of $\mathcal L$,  $D(\mathcal L)=W^{2,p}_{\mathcal{N}}$. Since $e^{t \mathcal L} p_\mathcal L(s,z_1,z_2)=p_\mathcal L(t+s,z_1,z_2)$, by the semigroup law, we have that $p(t+s, \cdot, z_2) \in W^{2,p}_{\mathcal{N}}\hookrightarrow C^1_b\left(\overline{\R^{N+1}_+}\right)$. The analyticity with respect to $t\in\Sigma_{\frac\pi 2-\arctan\frac{|a|}{1-|a|}}$  follows again by the identity $e^{t \mathcal L} p_\mathcal L(s,z_1,z_2)=p_\mathcal L(t+s,z_1,z_2)$, using the analyticity of the semigroup.
\qed

\smallskip 
The Cauchy formula for the derivatives of holomorphic functions allows to estimate $D_tp_\mathcal L$ and  $\mathcal L p_\mathcal L$.
	\begin{prop}\label{Time derivative estimates}
			Let $c+1>0$, $\eps>0$. Then   there exists $C,\ k>0$, such that, for every  $t\in\Sigma_{\frac\pi 2-\arctan\frac{|a|}{1-|a|}-\eps}$ and almost every $z_1,\ z_2\in\R^{N+1}_+$,
		\begin{align*} 
			\left|\mathcal L p_\mathcal L(t,z_1,z_2)\right|+	\left|D_tp_\mathcal L (t,z_1,z_2)\right|
			\	\leq C |t|^{-\frac{N+3}{2}}  y_2^{-c} \left(1\wedge \frac{y_2}{\sqrt{ |t|}}\right)^{c}\,\exp\left(-\dfrac{|z_1-z_2|^2}{k|t|}\right).
 \end{align*}
	\end{prop}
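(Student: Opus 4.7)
The idea is to upgrade the kernel upper bound \eqref{upper estimates ver2 La} to an estimate for $D_t p_\mathcal L$ via Cauchy's integral formula, exploiting the holomorphy of $t\mapsto p_\mathcal L(t,z_1,z_2)$ already established in Lemma \ref{regpb}. Once this is done, the bound on $|\mathcal L p_\mathcal L|$ is free: by the argument in the proof of Lemma \ref{regpb}, for each fixed $z_2$ and any $p>\max\{(c+1)^{-1},N+1\}$ the curve $t\mapsto p_\mathcal L(t,\cdot,z_2)$ takes values in $D(\mathcal L)=W^{2,p}_{\mathcal N}$ and is $L^p$-analytic, so $D_t p_\mathcal L(t,\cdot,z_2)=\mathcal L p_\mathcal L(t,\cdot,z_2)$ as an identity in $L^p$. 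The Morrey embedding $W^{2,p}_{\mathcal N}\hookrightarrow C^1_b(\overline{\R^{N+1}_+})$ then upgrades this identity to a pointwise one in $z_1$.

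Next I would fix $t\in\Sigma_{\frac\pi 2-\arctan\frac{|a|}{1-|a|}-\eps}$ and choose $\delta=\delta(\eps)>0$ so small that the closed disk $\overline{B(t,\delta|t|)}$ is contained in the slightly larger sub-sector $\Sigma_{\frac\pi 2-\arctan\frac{|a|}{1-|a|}-\eps/2}$; this is an elementary adjustment of opening angles. Cauchy's formula for the derivative yields
\begin{align*}
D_t p_\mathcal L(t,z_1,z_2)=\frac{1}{2\pi i}\oint_{|\zeta-t|=\delta|t|}\frac{p_\mathcal L(\zeta,z_1,z_2)}{(\zeta-t)^2}\,d\zeta,
\end{align*}
whence $|D_t p_\mathcal L(t,z_1,z_2)|\leq (\delta|t|)^{-1}\max_{|\zeta-t|=\delta|t|}|p_\mathcal L(\zeta,z_1,z_2)|$. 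Applying the upper estimate \eqref{upper estimates ver2 La} (valid on the intermediate sub-sector with constants depending on $\eps/2$) to the right-hand side and noting that $|\zeta|\in[(1-\delta)|t|,(1+\delta)|t|]$ on the circle, the polynomial factor $|\zeta|^{-(N+1)/2}$ is comparable to $|t|^{-(N+1)/2}$; combining this with the extra $(\delta|t|)^{-1}$ coming from Cauchy produces the claimed $|t|^{-(N+3)/2}$.

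The only delicate point is transferring the two non-polynomial factors from $\zeta$ on the circle back to $t$. For the Gaussian, $1/|\zeta|\leq 1/((1-\delta)|t|)$, so $\exp(-|z_1-z_2|^2/(k_{\eps/2}|\zeta|))$ is dominated by $\exp(-|z_1-z_2|^2/(k|t|))$ with $k=k_{\eps/2}/(1-\delta)$, at the price of merely enlarging the Gaussian constant. For the weight $y_2^{-c}(1\wedge y_2/\sqrt{|\zeta|})^c$ one either splits the two regimes $y_2\leq\sqrt{|t|}$ and $y_2>\sqrt{|t|}$, using $\sqrt{|\zeta|}\simeq\sqrt{|t|}$ in each, or invokes \eqref{equiv estimates up low} to absorb any residual discrepancy into the Gaussian at the cost of a further enlargement of $k$. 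This bookkeeping — checking that the sign of $c$ does not spoil uniformity — is the main, albeit essentially routine, obstacle; once it is dispatched the stated bound on $|D_t p_\mathcal L|$ follows, and the bound on $|\mathcal L p_\mathcal L|$ is the same by the pointwise identity established in the first step.
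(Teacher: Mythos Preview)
Your proposal is correct and follows essentially the same route as the paper: Cauchy's integral formula on a disk $B(t,\delta|t|)$ contained in a slightly larger sub-sector, combined with the kernel upper bound of Theorem~\ref{true.kernel} (in its equivalent form \eqref{upper estimates ver2 La}), yields the extra factor $|t|^{-1}$; and the identity $D_t p_\mathcal L=\mathcal L p_\mathcal L$ transfers the bound to $\mathcal L p_\mathcal L$. Your treatment is in fact slightly more careful than the paper's, in that you explicitly justify the pointwise identity via the $L^p$ domain and Morrey embedding, and you spell out the bookkeeping for the weight and Gaussian factors when passing from $|\zeta|$ back to $|t|$.
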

{\sc{Proof.}}
		Since the kernel $p_\mathcal L$  satisfies the equation $D_tp_\mathcal L=\mathcal L p_\mathcal L$,   it is sufficient to deal only with $D_t p_\mathcal L$.   Let  $r$ be small enough such  that
		\begin{align*}
			B\left(t_0,  r|t_0|\right)\subset \Sigma_{\frac\pi 2-\arctan\frac{|a|}{1-|a|}}, \qquad \forall t_0\in\Sigma_{\frac\pi 2-\arctan\frac{|a|}{1-|a|}-\eps}.
		\end{align*}
			Using the Cauchy formula for the derivatives of holomorphic functions in the ball  $B\left(t_0,  r|t_0|\right)$, we get 
		\begin{align*} 
			\left |D_t p_\mathcal L(t_0,y,\rho)\right|
			\leq \frac{1}{r|t_0|}\max_{|t-t_0|=r|t_0|}|p_\mathcal L(t,z_1,z_2)|,\qquad z_1,\ z_2\in\R^{N+1}_+.
		\end{align*}
		Applying the estimate of  Theorem \ref{true.kernel} in the sector $\Sigma_{\frac\pi 2-\arctan\frac{|a|}{1-|a|}}$ and recalling Remark \ref{oss equiv estimate}, we obtain for suitable $C', \kappa'$
		\begin{align*} 
		\left |D_t p_\mathcal L(t_0,y,\rho)\right|\leq C'|t|^{-\frac{N+3}{2}}  
			   y_2^{-c}\left (\frac{y_2}{|t_0|^{\frac{1}{2}}}\wedge 1 \right)^{c}
			\exp\left(-\frac{|z_1-z_2|^2}{\kappa' |z_0|}\right)
		\end{align*}
	which is equivalent to the statement. 
\qed

Before proving the estimates for  the gradient of  $p_\mathcal L$,  we recall that,  using  the scaling equalities of Proposition \ref{Prop scaling}, we have that  for any $t\in\Sigma_{\frac\pi 2-\arctan\frac{|a|}{1-|a|}}$, ${z_1}, {z_2}\in \R^{N+1}_+$ 
\begin{align} \label{scaling 1}
\nonumber	p_{\mathcal{L}}(t,z_1,z_2)&=|t|^{-\frac{N+1+c}2}p_{\mathcal{L}}\left(t|t|^{-1},\frac{z_1}{\sqrt {|t|}},\frac{z_2}{\sqrt {|t|}}\right),\\[1ex] 
	\nabla_{z_1}p_{\mathcal{L}}(t,z_1,z_2)&=|t|^{-\frac{N+2+c}2}\nabla p_{\mathcal{L}}\left(t|t|^{-1},\frac{z_1}{\sqrt {|t|}},\frac{z_2}{\sqrt {|t|}}\right).
\end{align}

Now we localize the  estimates of Lemma \ref{lem stime Lb inf}. For   $z\in\R^{N+1}_+$, $r>0$ we set $B^+(z,r):=B(z,r)\cap\R^{N+1}_+$. 
	\begin{prop}\label{Proposition inner estimates} Let $c+1>0$, $p>\max\{(c+1)^{-1}, N+1\}$. Then there exists a constant $C>0$  such that  for every $u \in W^{2,p}_{\mathcal{N}}$, $r>0$,
	\begin{align*}
		\|\nabla u \|_{L^\infty(B_{\frac r 2}^+)}\leq Cr^{-\frac{N+1}{p}}\left(r^2\|\mathcal L u\|_{L^p(B_r^+)}+\|u\|_{L^p(B_r^+)}\right)
	\end{align*}
\end{prop}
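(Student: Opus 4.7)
The plan is to derive this localized $L^p$--$L^\infty$ gradient bound from the \emph{global} $L^p$--$C^1$ estimate provided by Lemma \ref{lem stime Lb inf} combined with Morrey's embedding, through a cut-off plus scaling argument.

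First, I would observe that Lemma \ref{lem stime Lb inf} applied with $\lambda=1$ yields
\[
\|u\|_{W^{2,p}(\R^{N+1}_+)} \leq C\bigl(\|u\|_{L^p(\R^{N+1}_+)} + \|\mathcal L u\|_{L^p(\R^{N+1}_+)}\bigr).
\]
Since $p>N+1$, Morrey's embedding gives $W^{2,p}(\R^{N+1}_+) \hookrightarrow C^{1,\alpha}_b(\overline{\R^{N+1}_+})$ with $\alpha = 1 - (N+1)/p$, hence the global bound
\[
\|\nabla u\|_{L^\infty(\R^{N+1}_+)} \leq C\bigl(\|u\|_{L^p(\R^{N+1}_+)} + \|\mathcal L u\|_{L^p(\R^{N+1}_+)}\bigr)
\]
for every $u \in W^{2,p}_{\mathcal{N}}$.

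Next, by the scaling identity $\mathcal L v(z) = r^2 (\mathcal L u)(rz)$ with $v(z):=u(rz)$, recorded in Proposition \ref{Prop scaling}, it is enough to establish the localized estimate for $r=1$ and then rescale; the factor $r^{-(N+1)/p}$ and the weight $r^2$ in front of $\|\mathcal L u\|_{L^p(B_r^+)}$ will come out automatically from the transformation of $L^p$-norms and of $\mathcal L$. For $r=1$, I would introduce a smooth cut-off $\eta$ supported in $B_1$ with $\eta \equiv 1$ on $B_{1/2}$, and apply the global bound to $\eta u$:
\[
\|\nabla u\|_{L^\infty(B_{1/2}^+)} \leq \|\nabla(\eta u)\|_{L^\infty} \leq C\bigl(\|\eta u\|_{L^p} + \|\mathcal L(\eta u)\|_{L^p}\bigr).
\]
The commutator $\mathcal L(\eta u) - \eta \mathcal L u = [\mathcal L, \eta]u$ is a first-order differential expression in $u$ with coefficients supported in $B_1^+$, producing terms in $\|u\|_{L^p(B_1^+)}$ and $\|\nabla u\|_{L^p(B_1^+)}$; the latter is absorbed using the interpolation inequality $\|\nabla u\|_p^2 \leq C\|u\|_p\|D^2 u\|_p$ together with a second application of Lemma \ref{lem stime Lb inf} through a slightly larger cut-off (or, equivalently, an iteration over a nested sequence of balls $B_{r_k}^+$ with $r_k \uparrow 1$) to control $\|D^2 u\|_{L^p(B_1^+)}$ by $\|u\|_{L^p(B_1^+)} + \|\mathcal L u\|_{L^p(B_1^+)}$.

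The main technical obstacle is the choice of the cut-off $\eta$ so that $\eta u$ actually lies in $W^{2,p}_{\mathcal N}$, namely so that $y^{-1} D_y(\eta u) \in L^p$ and the Neumann condition is preserved. This forces $\eta$ to be even in $y$ across $y=0$, so that $D_y\eta = O(y)$ near the boundary; with this choice the singular contribution $\frac{c}{y}\, u\, D_y \eta$ to the commutator $[\mathcal L,\eta]u$ stays bounded in $L^p$ by a constant multiple of $\|u\|_{L^p(B_1^+)}$, and the mixed term $2a\cdot\nabla_x\eta\,D_y u$ is handled together with $2a\cdot\nabla_x u\,D_y\eta$ by the same interpolation argument used for the $\|\nabla u\|_{L^p}$ term. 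Once this technical hurdle is cleared, the rest of the proof is a routine cut-off computation followed by the rescaling $v(z)=u(rz)$.
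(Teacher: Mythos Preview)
Your approach is essentially the same as the paper's: reduce to $r=1$ by scaling, apply the global estimate of Lemma \ref{lem stime Lb inf} to cut-off functions that are even in $y$ (so that $y^{-1}D_y\eta$ stays bounded and $\eta u\in W^{2,p}_{\mathcal N}$), and absorb the gradient term in the commutator by an iteration over nested balls $B_{r_k}^+$ with $r_k\uparrow 1$. The only cosmetic difference is that the paper carries out the iteration entirely at the $L^p$ level---using a geometric weighting $\xi^n$ to make the telescoping sum converge---and applies Morrey at the very end, whereas you invoke Morrey at the outset; either order works, but note that your ``second application through a slightly larger cut-off'' does not close by itself and the nested-ball iteration you mention parenthetically is in fact indispensable.
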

{\sc Proof.}  Up to a scaling argument we can suppose, without any loss of generality, $r=1$. Set $r_n=\sum_{k=1}^n2^{-k}$. Then $r_1=1/2$, $r_\infty=1$, $r_{n+1}-r_n=2^{-(n+1)}$.

Let  $B^+_n=B^+(z,r_n)$, $B^+= B^+(z,1)$, $B_{\frac 1 2}^+= B^+(z,1/2)$  and choose  cut-off functions $\eta_n\in C_c^{\infty }(\R^{N+1})$ such that  $\eta_n(x,y)=\eta_n(x,-y)$,
$0\le \eta_n\le 1$, 
$\eta_n=1$ in $B_n^+$, $({\rm supp \ } \eta_n) \cap \R^{N+1}_+ \subset B_{n+1}^+$,
$|\nabla \eta_n| \leq \frac{C}{r}2^n$,  $|D^2 \eta_n| \leq \frac{C}{r^2}4^n$ for some constant $C>0$ independent of $n$. Then also $|y^{-1}D_y \eta_n| \leq \frac{C}{r^2}4^n$, since $D_y \eta_n(x,0)=0$.  If  $u\in W^{2,p}_{\mathcal{N}}$ then  $\eta_n u\in W^{2,p}_{\mathcal{N}}$   and we have
\begin{align*}
	\mathcal L(\eta_n u)=&\eta_n \mathcal L u +2\nabla_x\eta_n\nabla_xu+2 D_y \eta_n D_y u+2a\cdot(\nabla_x\eta_nD_yu+D_y\eta_n\nabla_xu)\\& +u\left(D_{yy}\eta_n+\Delta_x\eta_n+2a\cdot\nabla_xD_y\eta_n+c\frac{D_y\eta_n}{y}\right).
\end{align*}
Applying Lemma \ref{lem stime Lb inf} with $\lambda=1$ to $\eta_n u$   we get 
\begin{align*}
	&\|\eta_nu\|_p+\|\nabla (\eta_nu)\|_p+\|D^2 (\eta_nu)\|_p
	\leq C \left(\|\mathcal L (\eta_nu)\|_p+\|\eta_nu\|_p\right)\\[1.5ex]
	&\leq C\left( \|\mathcal L u\|_{L^p(B^+)}+2^n\|\nabla(\eta_{n+1}u)\|_{p}+\left(4^n+1\right)\|u\|_{L^p(B_r^+)}\right).
\end{align*}
Applying the interpolative inequality $\|\nabla u\|_p \leq C\epsilon  \|D^2u\|_p^2+\frac 1 \epsilon \|u\|_p$ we get for $\epsilon>0$
\begin{align*}
	&\|\eta_nu\|_p+\|\nabla (\eta_nu)\|_p+\|D^2 (\eta_nu)\|_p \\[1.5ex]
	&\leq C\left(\|\mathcal L u\|_{L^p(B^+)}+\epsilon 2^n\|D^2(\eta_{n+1}u)\|_{p}+\frac{2^n}{\epsilon }\|\eta_{n+1}u\|_{p}+\left(4^n+1\right)\|u\|_{L^p(B^+)}\right)\\[1.5ex]
	&\leq C\left(\|\mathcal L u\|_{L^p(B^+)}+\epsilon 2^n\|D^2(\eta_{n+1}u)\|_{p}+\left(\frac{2^n}{\epsilon }+4^n+1\right)\|u\|_{L^p(B_r^+)}\right)
\end{align*}
Setting
$\xi:=C2^n\eps $, we get
\begin{align*}
	& \|\eta_nu\|_p+\|\nabla (\eta_nu)\|_p+\|D^2 (\eta_nu)\|_p \\[1.5ex]
	&\leq C\left( \|  u-\mathcal L u\|_{L^p(B^+)}+\left(\frac{4^n}{\xi }+{4^n}\right)\|u\|_{L^p(B^+)}\right)+\xi \|D^2(\eta_{n+1}u)\|_{p}.
\end{align*}
It follows that 
\begin{align*}
	&\xi^n\Big( \|\eta_nu\|_p+\|\nabla (\eta_nu)\|_p+\|D^2(\eta_nu)\|_p \Big)\\[1.5ex]
	&\leq C\left( \xi^n\|  u-\mathcal L u\|_{L^p(B_r^+)}+\xi^n\left(\frac{4^n}{\xi }+{4^n}\right)\|u\|_{L^p(B^+)}\right)+\xi^{n+1} \|D^2(\eta_{n+1} u)\|_{p}.
\end{align*}
By choosing $\eps=\eps_n$  so that $\xi=\frac 18$ and summing up the  previous inequality over $n\in\N$ we get
\begin{align*}
	  \|\nabla u \|_{L^p(B_{\frac 1 2}^+)}+\sum_{n=1}^\infty \xi^n \|D^2 (\eta_nu)\|_p
	\leq &C\left(\|u-\mathcal Lu\|_{L^p(B^+)}+\|u\|_{L^p(B^+)}\right)\\[1.5ex]
	&+\sum_{n=1}^\infty \xi^{n+1} \|D^2 (\eta_{n+1}u)\|_p.
\end{align*}
Cancelling equal terms on both sides it follows that
\begin{align*}
	\|\nabla u \|_{L^p(B_{\frac 1 2}^+)}&+\frac{1}{8}\|D^2 u\|_{L^p(B_{\frac 1 2}^+)}\leq C\left(\| u-\mathcal L u\|_{L^p(B^+)}+\|u\|_{L^p(B^+)}\right).
\end{align*}
 Since $p>N+1$, using the  Morrey's embedding, the previous inequality implies
we get 
\begin{align*}
	\|\nabla u \|_{L^\infty(B_{\frac 1 2}^+)}&\leq C\left(\|\nabla u \|_{L^p(B_{\frac 1 2}^+)}+\|D^2 u\|_{L^p(B_{\frac 1 2}^+)}\right)\leq C\left(\| u-\mathcal L u\|_{L^p(B^+)}+\|u\|_{L^p(B^+)}\right)
\end{align*}
which implies the required claim for $r=1$.
\qed

\begin{os}\label{os grad ajoint}
		 We remark that the same results as those in  Lemmas \ref{lem stime Lb inf} and  \ref{regpb} and  Propositions \ref{Time derivative estimates}  and  \ref{Proposition inner estimates}  hold also for the adjoint  operator $\mathcal L^\ast $ under the oblique boundary  condition $\ds \lim_{y \to 0} y^c \left ( 2a\cdot \nabla_x u+D_yu\right )=0$.  To be concise  we do not state them explicitly but their proof follows by arguing as before and by using the generation and the  domain characterization for the operator $\mathcal L^*$ endowed with domain 	$D\left(\mathcal L\right)=W^{2,p}_{v}$, $v=(2a,1)$ (see Theorem \ref{Teo gen in Lpm}  and Corollary \ref{Oblique cor1}). 
\end{os}
We need the following basic estimate.
\begin{lem}\label{lemma loc gauss}
	Let $z_0, z_2 \in\R^{N+1}_+$. Then 
	\begin{align*}
\sup_{z_1\in B(z_0,\sqrt t) }\exp\left(-\frac{|z_1-z_2|^2}t\right)\leq e^{16}\exp\left(-\frac 9{16}\frac{|z_0-z_2|^2}t\right)
	\end{align*}
\end{lem}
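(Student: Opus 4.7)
The plan is to split into two cases depending on whether $|z_0-z_2|$ is large or small compared to $\sqrt t$. The threshold $4\sqrt t$ is what the constants $9/16$ and $e^{16}$ in the statement suggest, since $(1-1/4)^2=9/16$ and $(9/16)\cdot 16=9<16$.

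First I would apply the triangle inequality to write $|z_1-z_2|\ge |z_0-z_2|-|z_1-z_0|\ge |z_0-z_2|-\sqrt t$ for any $z_1\in B(z_0,\sqrt t)$. In the regime $|z_0-z_2|\ge 4\sqrt t$ the right-hand side is at least $(3/4)|z_0-z_2|$, so squaring gives
\[
|z_1-z_2|^2\ge \tfrac{9}{16}|z_0-z_2|^2,
\]
and hence $\exp(-|z_1-z_2|^2/t)\le \exp(-\tfrac{9}{16}|z_0-z_2|^2/t)\le e^{16}\exp(-\tfrac{9}{16}|z_0-z_2|^2/t)$, which is the claim in this range.

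In the complementary regime $|z_0-z_2|<4\sqrt t$ I would simply use the trivial bound $\exp(-|z_1-z_2|^2/t)\le 1$. Since $\tfrac{9}{16}|z_0-z_2|^2/t<9$ in this case, the right-hand side of the desired inequality is at least $e^{16}\cdot e^{-9}=e^7\ge 1$, finishing the proof. Taking the supremum over $z_1\in B(z_0,\sqrt t)$ at the end concludes the argument.

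There is no real obstacle here: the estimate is a standard loss-of-Gaussian-constant computation, and the proof is essentially two lines of triangle inequality plus case analysis. The only thing worth being careful about is ensuring the case split is handled cleanly so that both sides of the stated bound are compared on equal footing.
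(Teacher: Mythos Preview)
Your proof is correct and follows essentially the same approach as the paper: the same case split at the threshold $4\sqrt t$, the triangle inequality in the far regime to get the factor $9/16$, and the trivial bound $\exp(-|z_1-z_2|^2/t)\le 1$ in the near regime. The only cosmetic difference is that in the near case the paper compares against $e^{16}\exp(-|z_0-z_2|^2/t)\ge 1$ rather than $e^{16}\exp(-\tfrac{9}{16}|z_0-z_2|^2/t)\ge e^7$, which is immaterial.
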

{\sc{Proof.}}  If $|z_0-z_2|\leq 4\sqrt t$ then for every $z_1\in\R^{N+1}_+$
\begin{align*}
\exp\left(-\frac{|z_1-z_2|^2}t\right)\leq  e^{16}\exp\left(-\frac{|z_0-z_2|^2}t\right).
\end{align*}
If $|z_0-z_2|> 4\sqrt t$ and $z_1\in B(z_0,\sqrt t)$, then $|z_1-z_2|\geq |z_0-z_2|-|z_1-z_0| \geq |z_0-z_2|-\frac 1 4 |z_0-z_2|=\frac 3 4 |z_0-z_2|$ and 
\begin{align*}
	\exp\left(-\frac{|z_1-z_2|^2}t\right)\leq  \exp\left(-\frac 9{16}\frac{|z_0-z_2|^2}t\right).
\end{align*}
\qed 

We can finally  deduce  pointwise estimates for  the gradient of the kernel $p_{\mathcal L}(t,z_1,z_2)$. Recalling \eqref{adjoint-ker}, due  to the asymmetry of $p_{\mathcal L}$ and  in order to control both derivatives with respect to  $z_1,\ z_2\in\R^{N+1}_+$, we state them also for the adjoint operator $\mathcal L^\ast$. 
\begin{teo}\label{space derivative estimates}
	Let $c+1>0$, $\eps>0$.   Then  for  every $\epsilon>0$,  there exist $C,k>0$, such that, for every  $t\in\Sigma_{\frac\pi 2-\arctan\frac{|a|}{1-|a|}- \epsilon }$ and almost every $z_1,\ z_2\in\R^{N+1}_+$,
		\begin{align*} 
			\left|\nabla_{z_1} p_\mathcal L(t,z_1,z_2)\right|+	\left|\nabla_{z_1} p_{\mathcal L^\ast }(t,z_1,z_2)\right|
			\	\leq C |t|^{-\frac{N+2}{2}}  y_2^{-c} \left(1\wedge \frac{y_2}{\sqrt{ |t|}}\right)^{c}\,\exp\left(-\dfrac{|z_1-z_2|^2}{k|t|}\right).
 \end{align*}
\end{teo}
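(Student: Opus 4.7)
The natural approach is to reduce to the case $|t|=1$ via scaling and then exploit the elliptic interior gradient estimate from Proposition \ref{Proposition inner estimates}, feeding into it the kernel upper bound of Theorem \ref{true.kernel} and the pointwise bound on $\mathcal L p_\mathcal L$ of Proposition \ref{Time derivative estimates}. Concretely, the scaling identity \eqref{scaling 1} reads
\begin{equation*}
\nabla_{z_1} p_{\mathcal L}(t,z_1,z_2)=|t|^{-\frac{N+2+c}{2}}\,\nabla p_{\mathcal L}\!\left(\tfrac{t}{|t|},\tfrac{z_1}{\sqrt{|t|}},\tfrac{z_2}{\sqrt{|t|}}\right),
\end{equation*}
so it is enough to establish, for every $t$ in the sub-sector with $|t|=1$, the estimate
\begin{equation*}
|\nabla p_{\mathcal L}(t,z_1,z_2)|\leq C\, y_2^{-c}(1\wedge y_2)^{c}\,\exp\!\left(-\tfrac{|z_1-z_2|^2}{k}\right);
\end{equation*}
replacing $z_i$ by $z_i/\sqrt{|t|}$ afterwards yields the desired bound.

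Fix such a $t$ and fix $z_2\in\R^{N+1}_+$. Choose $p>\max\{(c+1)^{-1},N+1\}$. By Lemma \ref{regpb} the function $u(\cdot):=p_{\mathcal L}(t,\cdot,z_2)$ belongs to $W^{2,p}_{\mathcal N}$, so for every $z_1\in\R^{N+1}_+$ Proposition \ref{Proposition inner estimates} applied on the ball $B^+(z_1,1)$ with $r=1$ gives
\begin{equation*}
|\nabla u(z_1)|\leq \|\nabla u\|_{L^\infty(B^+(z_1,1/2))}\leq C\bigl(\|\mathcal L u\|_{L^p(B^+(z_1,1))}+\|u\|_{L^p(B^+(z_1,1))}\bigr).
\end{equation*}
By the upper bound \eqref{upper estimates ver2 La} (for $i=2$) and by Proposition \ref{Time derivative estimates}, since $|t|=1$, both $|u(w)|$ and $|\mathcal L u(w)|$ are majorised by $C\,y_2^{-c}(1\wedge y_2)^{c}\exp(-|w-z_2|^2/k)$ for $w\in\R^{N+1}_+$. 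The factor $y_2^{-c}(1\wedge y_2)^{c}$ is independent of the integration variable $w$ and can be pulled out, while Lemma \ref{lemma loc gauss} gives
\begin{equation*}
\sup_{w\in B(z_1,1)}\exp\!\left(-\tfrac{|w-z_2|^2}{k}\right)\leq C\exp\!\left(-\tfrac{|z_1-z_2|^2}{k'}\right)
\end{equation*}
for a possibly larger constant $k'>0$. Since the Lebesgue volume of $B^+(z_1,1)$ is uniformly bounded, one concludes
\begin{equation*}
|\nabla u(z_1)|\leq C\,y_2^{-c}(1\wedge y_2)^{c}\exp\!\left(-\tfrac{|z_1-z_2|^2}{k'}\right),
\end{equation*}
which, combined with the scaling step above, produces the stated estimate for $\nabla p_{\mathcal L}$. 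For $\nabla p_{\mathcal L^{\ast}}$ the same argument applies verbatim using the adjoint versions of Lemma \ref{regpb} and Propositions \ref{Time derivative estimates}, \ref{Proposition inner estimates} mentioned in Remark \ref{os grad ajoint}, now relative to the domain $W^{2,p}_{v}$ with $v=(2a,1)$.

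The genuinely delicate point is of a technical nature: the interior $L^p\!\to\! L^\infty$ gradient estimate of Proposition \ref{Proposition inner estimates} is a static, elliptic regularity statement that only requires the fixed-time function $p_{\mathcal L}(t,\cdot,z_2)$ to lie in $W^{2,p}_{\mathcal N}$, so it transfers to complex times $t\in\Sigma_{\frac{\pi}{2}-\arctan\frac{|a|}{1-|a|}-\epsilon}$ without modification, provided the inputs on the right-hand side are controlled uniformly in such $t$. This uniformity for the $L^p$ norm of both $u$ and $\mathcal L u$ on unit balls is exactly what Theorem \ref{true.kernel} and Proposition \ref{Time derivative estimates} provide, at the price of shrinking the sector by $\epsilon$. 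No further subtlety arises: the degeneracy weight $y_2^{-c}(1\wedge y_2/\sqrt{|t|})^{c}$ depends only on the ``source'' variable $z_2$ that is frozen throughout the argument, so it factors through all the $L^p$ computations trivially.
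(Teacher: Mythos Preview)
Your proposal is correct and follows essentially the same route as the paper: reduce to $|t|=1$ via the scaling identity \eqref{scaling 1}, apply the localized gradient estimate of Proposition \ref{Proposition inner estimates} to $u=p_{\mathcal L}(t,\cdot,z_2)$, control $\|u\|_{L^p}$ and $\|\mathcal L u\|_{L^p}$ on the unit ball through the pointwise bounds of Theorem \ref{true.kernel} and Proposition \ref{Time derivative estimates}, and use Lemma \ref{lemma loc gauss} to recenter the Gaussian; the adjoint case is handled identically via Remark \ref{os grad ajoint}.
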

{\sc{Proof.}} Let us firstly prove the required estimate for $\nabla_{z_1} p_\mathcal L$. In virtue of the validity of the scaling property \eqref{scaling 1}, we may  assume that $|t|=1$.  Let $z_0,\, z_2\in\R^{N+1}_+$. Then applying Proposition \ref{Proposition inner estimates} with $r=1$  to the function  $u=p_{\mathcal L}(t,\,\cdot\,,z_2)$ in  $B^+(z_0,1)$  we get 
\begin{align*} 
		 \left|\nabla p_{\mathcal L} (t,z_0,z_2)\right|&\leq C\left(\|{\mathcal L}u\|_{L^p(B^+(z_0,2))}+\|u\|_{L^p(B^+(z_0,2))}\right)\\[1ex]
		 &\leq C\left(\|{\mathcal L}u\|_{L^\infty(B^+(z_0,2))}+\|u\|_{L^\infty(B^+(z_0,2))}\right).
\end{align*} 
 Using \eqref{upper estimates ver2 La} and  Proposition \ref{Time derivative estimates},  we get for suitable  $C, \kappa >0$ 
  \begin{align*} 
 	\|u\|_{L^\infty(B^+(z_0,2))}+\|\mathcal Lu\|_{L^\infty(B^+(z_0,2))}&\leq  C y_2^{-c}\left (y_2\wedge 1 \right)^{c}
 	\sup_{z\in B^+(z_0,2) }\exp\left(-\frac{|z-z_2|^2}{\kappa}\right).
 \end{align*}
Lemma \ref{lemma loc gauss}, combined with the previous inequalities, then implies (for suitable $C', \kappa'>0$)
 \begin{align*} 
 	 \left|\nabla p_{\mathcal L} (t,z_0,z_2)\right|&\leq C' y_2^{-c}\left (y_2\wedge 1 \right)^{c}
 	\exp\left(-\frac{|z_0-z_2|^2}{\kappa'}\right)
 \end{align*}
 which is the statement for $|t|=1$. The result for general $t$ follows by the scaling property \eqref{scaling 1}.
 
The analogous estimates $\nabla_{z_1} p_{\mathcal L^\ast}$  can be proved similarly using Remark \ref{os grad ajoint}. 
\qed 

The following corollary is a direct consequence of Theorem \ref{space derivative estimates}. For simplicity we state it only for $\mathcal L$.
\begin{cor}\label{Dy T(t)_b}
Let  $1<p < \infty$, $0< \frac{m+1}{p} <c+1$, $f\in L^p_{m}$.  Then for every  $t\in\Sigma_{\frac\pi 2-\arctan\frac{|a|}{1-|a|}- \epsilon}$ ,   $e^{t\mathcal L}f$ is differentiable in $\R^{N+1}_+$ and one has
	\begin{align}\label{der semigroup}
		 \nabla e^{t\mathcal L}f=\int_{\R^{N+1}_+}\nabla p_{\mathcal L}(t,\cdot,z_2)f(z_2)\,y_2^{c} dz_2.
	\end{align}
Moreover, for  every $\epsilon>0$, there exist $C,\ k>0$  such that,
	\begin{align}\label{der semigroup1}
 |\nabla e^{t\mathcal L}f(z_1)|\leq \frac{C}{ |t|^{\frac{N+2}{2}}} \int_{\R^{N+1}_+} \left(1\wedge \frac{y_2}{\sqrt{ |t|}}\right)^{c}\,\exp\left(-\frac{|z_1-z_2|^2}{k|t|}\right)|f(z_2)|\,dz_2.
\end{align}
 
	\end{cor}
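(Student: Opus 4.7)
The corollary is essentially a consequence of the pointwise kernel gradient estimate in Theorem \ref{space derivative estimates}, combined with a dominated convergence argument to justify differentiation under the integral sign. My plan is divided into three steps.

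\textbf{Step 1: Kernel representation on $L^p_m$.} Theorem \ref{true.kernel} gives the integral representation $e^{t\mathcal{L}}f(z_1)=\int_{\R^{N+1}_+}p_{\mathcal{L}}(t,z_1,z_2)f(z_2)\,y_2^c\,dz_2$ for $f\in L^2_c$. For general $f\in L^p_m$ with $0<(m+1)/p<c+1$, I would extend this representation by density, using the upper kernel bound \eqref{upper estimates ver2 La} together with the boundedness criterion for integral operators on $L^p_m$ from the Appendix: both the semigroup (by Theorem \ref{Teo gen in Lpm}) and the integral operator defined by $p_\mathcal{L}$ are bounded on $L^p_m$ under the standing hypothesis, and they agree on the dense subset $L^2_c \cap L^p_m$.

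\textbf{Step 2: Differentiation under the integral.} For fixed $z_2$ and $t\in\Sigma_{\frac{\pi}{2}-\arctan\frac{|a|}{1-|a|}-\epsilon}$, Lemma \ref{regpb} gives that $p_\mathcal{L}(t,\cdot,z_2)\in C^1_b(\overline{\R^{N+1}_+})$, and Theorem \ref{space derivative estimates} furnishes the pointwise majorant
\[
|\nabla_{z_1}p_{\mathcal{L}}(t,z_1,z_2)f(z_2)|\,y_2^c \leq C|t|^{-\frac{N+2}{2}}\left(1\wedge\tfrac{y_2}{\sqrt{|t|}}\right)^{c}\exp\left(-\tfrac{|z_1-z_2|^2}{k|t|}\right)|f(z_2)|.
\]
To apply dominated convergence in the difference quotients of $z_1\mapsto e^{t\mathcal{L}}f(z_1)$, I need the right-hand side (written in the form $y_2^{c-m}\cdot[\cdots]\cdot|f(z_2)|\,y_2^m$) to be integrable uniformly for $z_1$ in a small neighborhood. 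By Hölder's inequality in $L^p(y^m\,dz)$ this reduces to verifying that the function $z_2\mapsto y_2^{c-m}(1\wedge y_2/\sqrt{|t|})^c \exp(-|z_1-z_2|^2/(k|t|))$ belongs to $L^{p'}(y^m\,dz)$, uniformly in $z_1$ on compact sets.

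\textbf{Step 3: Verification of integrability.} The $x$-integration is Gaussian and harmless. The $y$-integration near $y_2=0$ produces the power $y_2^{(c-m)p' + cp' + m}$... more carefully, the exponent is $(c-m)p' + m$ near infinity (where the cutoff $(1\wedge\cdot)^c$ equals $1$ for small $|t|$-scale) and involves $y_2^{(c-m)p'+cp'+m}=y_2^{cp'-(p'-1)m}$ near zero. Integrability near $0$ thus requires $cp'-(p'-1)m>-1$, which after dividing by $p'$ and using $(p'-1)/p'=1/p$ becomes exactly $(m+1)/p<c+1$; integrability near infinity is ensured by the Gaussian factor. This is the hypothesis. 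Granted the majorant, dominated convergence yields both the differentiability of $e^{t\mathcal{L}}f$ and the identity \eqref{der semigroup}. The pointwise bound \eqref{der semigroup1} is then immediate upon taking absolute values inside the integral and invoking Theorem \ref{space derivative estimates}, the factor $y_2^{-c}$ from the gradient estimate cancelling the $y_2^c$ of the measure.

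\textbf{Main obstacle.} The only non-routine point is the duality computation in Step 3: tracking the precise exponent of $y_2$ so that the range condition $(m+1)/p<c+1$ enters exactly, and in particular ensuring uniformity of the majorant on a neighborhood of $z_1$ (which follows since the Gaussian is continuous and $z_1\in\R^{N+1}_+$ stays away from $\partial\R^{N+1}_+$ only through its $y_1$-component, but the estimate does not degenerate there). All remaining steps are standard applications of dominated convergence and Hölder's inequality.
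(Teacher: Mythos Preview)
Your approach is essentially identical to the paper's: both justify \eqref{der semigroup} by differentiating under the integral sign, using that the gradient majorant from Theorem \ref{space derivative estimates} lies in $L^{p'}_m$ as a function of $z_2$ under the hypothesis $(m+1)/p<c+1$, and then read off \eqref{der semigroup1} directly from the same pointwise bound. One small slip in your Step~3: after applying the gradient estimate the function whose $L^{p'}_m$-membership you must check is $y_2^{-m}(1\wedge y_2/\sqrt{|t|})^{c}\exp(\cdots)$, not $y_2^{c-m}(1\wedge y_2/\sqrt{|t|})^{c}\exp(\cdots)$ (the factor $y_2^{-c}$ from Theorem \ref{space derivative estimates} cancels the $y_2^{c}$ in your $y_2^{c-m}$); with this correction your near-zero exponent $cp'-(p'-1)m$ and the conclusion $(m+1)/p<c+1$ are indeed right.
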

{\sc Proof.} 
	Let $z_0, r>0$ such that $B(z_0,r)\in\R^{N+1}_+$. By Theorem \ref{space derivative estimates},  for almost every $z_1\in B(z_0,r)$, $z_2\in\R^{N+1}_+$, one has 
\begin{align}  \label{stimagradiente}
			\left|\nabla p_{\mathcal L} (t,z_1,z_2)\right|y_2^c\leq \frac{C}{ |t|^{\frac{N+2}{2}}} \left(1\wedge \frac{y_2}{\sqrt{ |t|}}\right)^{c}\,\exp\left(-\frac{|z_1-z_2|^2}{k_\epsilon|t|}\right)|f(z_2)|.
	\end{align}
	for  suitable  $C$ and $ \kappa$ depending also  on $r$ and $z_0$. Then \eqref{der semigroup} follows by differentiating under the integral sign since 
the right hand side of \eqref{stimagradiente} belongs to $L^{p'}_m$. Finally, \eqref{der semigroup1} is consequence of Theorem \ref{space derivative estimates}.
	\qed

\section{The semigroup in $L^1_c$ and the conservation property}\label{Section conservativity}
In this section we prove that the semigroups $e^{t\mathcal L}$, $e^{t\mathcal L^\ast }$, initially defined on $L^2_c$, extend also   on $L^1_c$ and we discuss the conservation property
\begin{align*}
	e^{t\mathcal L}1=1,\qquad e^{t\mathcal L^\ast }1=1,\qquad \quad \text{for any}\quad t\geq 0.
\end{align*}

Following the notation introduced in the appendix, we start by  rewriting the upper  estimates of Theorems \ref{true.kernel} and \ref{space derivative estimates}  in terms of  the family of integral operators
\begin{align*}
	S^{0,-c}(t)f(z_1)=t^{-\frac {N+1} 2}\int_{\R^{N+1}_+}  \left (\frac{y_2}{\sqrt t}\wedge 1 \right)^{c}
	\exp\left(-\frac{|z_1-z_2|^2}{\kappa t}\right)f(z_2) \,dz_2,
\end{align*}
defined for $t>0$ and $z_1=(x_1,y_1),\ z_2=(x_2,y_2)\in\R^{N+1}_+$. Here $\kappa$ is a positive constant; to shorten the notation we omit the dependence on $\kappa$  which may vary in each occurrence. We recall that, by Proposition \ref{Boundedness theta} and by \cite[Proposition A.1]{MNS-Caf-Schauder}, if $c+1>0$ then the family $S^{0,-c}(t)$ is uniformly bounded on $L^p_c$ for $1\leq p\leq\infty$.

\begin{prop}\label{Prop stime sem+grad family}
	Let $c+1>0$. Then, for some positive constant $C>0$ and  a  suitable $\kappa>0$, one has  for any  $f\in L^2_c$ and $t>0$
	\begin{align}
		\label{stime sem+grad family}
		\left|e^{t\mathcal L}f\right|+ \sqrt t\left|\nabla e^{t\mathcal L}f\right|	\leq CS^{0,-c}(t)|f|,\qquad 	\left|e^{t\mathcal L^\ast }f\right|+\sqrt t\left|\nabla e^{t\mathcal L^\ast }f\right|\leq C\,S^{0,-c}(t)|f|.
	\end{align}
\end{prop}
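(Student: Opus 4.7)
The plan is to deduce both estimates by direct inspection of the kernel representation of $e^{t\mathcal L}$ (and $e^{t\mathcal L^\ast}$) together with the pointwise bounds on $p_{\mathcal L}$ and $\nabla p_{\mathcal L}$ already established earlier in the paper. The key algebraic observation is that the heat kernel $p_{\mathcal L}$ is written with respect to the weighted measure $y_2^c\, dz_2$, so that the factor $y_2^{-c}$ appearing in the upper bound \eqref{upper estimates ver2 La} is absorbed exactly by the weight when we write out $e^{t\mathcal L}f$ as an integral, leaving precisely the kernel of $S^{0,-c}(t)$ acting against $|f|$ with respect to Lebesgue measure.

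For the semigroup bound, I would start from
\begin{equation*}
  e^{t\mathcal L}f(z_1)=\int_{\R^{N+1}_+} p_{\mathcal L}(t,z_1,z_2)f(z_2)\,y_2^c\,dz_2,
\end{equation*}
take absolute values inside the integral, and apply \eqref{upper estimates ver2 La} with $i=2$. The resulting integrand contains $y_2^{-c}\bigl(1\wedge y_2/\sqrt t\bigr)^c \,y_2^c = \bigl(1\wedge y_2/\sqrt t\bigr)^c$, which is exactly the kernel of $S^{0,-c}(t)$ (after a harmless adjustment of the constant $\kappa$). For the gradient bound, I would appeal to Corollary \ref{Dy T(t)_b}, in particular to \eqref{der semigroup1}, which already delivers the desired inequality in essentially final form: factoring $|t|^{-(N+2)/2}=|t|^{-1/2}\cdot|t|^{-(N+1)/2}$ identifies the remaining integral with $S^{0,-c}(t)|f|(z_1)$, so that multiplying through by $\sqrt{t}$ yields $\sqrt t\,|\nabla e^{t\mathcal L}f|\le C\,S^{0,-c}(t)|f|$.

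For the adjoint operator $\mathcal L^\ast$ the argument is identical: the kernel $p_{\mathcal L^\ast}$ obeys the same upper bound by \eqref{adjoint-ker}, and the gradient estimate of Theorem \ref{space derivative estimates} is stated explicitly for $\nabla_{z_1} p_{\mathcal L^\ast}$. Corollary \ref{Dy T(t)_b} transfers to $\mathcal L^\ast$ via Remark \ref{os grad ajoint}, so both inequalities for $e^{t\mathcal L^\ast}$ follow by the same one-line manipulation.

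Since the two constituent ingredients — the upper heat kernel estimate and the pointwise gradient bound — are already established, I do not anticipate any serious technical obstacle. The only point requiring minor care is keeping track of the varying Gaussian constant $\kappa$, which is harmless because the statement is indifferent to its precise value, together with the bookkeeping of the weight $y_2^c$ coming from the measure versus the $y_2^{-c}$ coming from the kernel bound. The whole proof is therefore a short computation and can be presented in a few lines.
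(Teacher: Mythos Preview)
Your proposal is correct and follows exactly the approach of the paper: the paper's proof is the single line ``The required estimates follow from Theorem~\ref{true.kernel}, Corollary~\ref{Dy T(t)_b} and Remark~\ref{oss equiv estimate}'', and you have simply unpacked that line, using \eqref{upper estimates ver2 La} (the equivalent form of the kernel bound) together with the integral representation and \eqref{der semigroup1} for the gradient, and the adjoint analogues via \eqref{adjoint-ker} and Remark~\ref{os grad ajoint}. No additional idea is needed.
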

\begin{proof}
	The required estimates follows from    Theorems \ref{true.kernel},  Corollary \ref{Dy T(t)_b} and Remark \ref{oss equiv estimate}.
\end{proof}

We now prove that the semigroups $e^{t\mathcal L}$, $e^{t\mathcal L^\ast }$ extend from $L^2_c\cap L^1_c$ to  strongly continuous semigroups on $L^1_c$.

\begin{prop} \label{strong-cont}
	Let $c+1>0$ and $1\leq p<\infty$.	Then the  semigroup $e^{t\mathcal L}$  and  $e^{t\mathcal L^*}$ extend from $L^2_c\cap L^p_c$ to  strongly continuous semigroups on $L^p_c$.
\end{prop}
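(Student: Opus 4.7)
The proof decomposes into three steps: (i) extend each semigroup to a bounded family on $L^p_c$, (ii) transfer the semigroup identity by density, (iii) establish strong continuity at $t=0$. For steps (i) and (ii), the pointwise majorization $|e^{t\mathcal L}f|\le C\,S^{0,-c}(t)|f|$ from Proposition \ref{Prop stime sem+grad family}, combined with the uniform $L^p_c$-boundedness of $\{S^{0,-c}(t)\}_{t>0}$ (valid for every $1\le p\le\infty$ when $c+1>0$, by Proposition \ref{Boundedness theta}), lets me extend $e^{t\mathcal L}$ from $L^2_c\cap L^p_c$ (which is dense in $L^p_c$ by standard truncation/localization since the boundary has $y^c\,dz$-measure zero) to a bounded operator $T_p(t)$ on $L^p_c$ with operator norm uniformly controlled in $t$ on bounded intervals. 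The identity $T_p(t+s)=T_p(t)T_p(s)$ is inherited from $L^2_c$ on $L^2_c\cap L^p_c$ and extends by continuity. The same construction applies to $\mathcal L^*$.

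\textbf{Strong continuity for $1<p<\infty$.} In this range the hypothesis $0<(c+1)/p<c+1$ of Theorem \ref{Teo gen in Lpm} is fulfilled with $m=c$ (the lower strict inequality requires $p>1$), and the theorem directly supplies a bounded analytic, hence strongly continuous, semigroup on $L^p_c$. Consistency with $T_p(t)$ is verified on the common subspace $L^2_c\cap L^p_c$ by equality of resolvents $R(\lambda)f=\int_0^\infty e^{-\lambda s}T(s)f\,ds$ for $\lambda$ large, both expressions being computed through the common integral kernel of Theorem \ref{true.kernel}.

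\textbf{Strong continuity for $p=1$, the main obstacle.} This is the delicate case: at the endpoint $(c+1)/p=c+1$ Theorem \ref{Teo gen in Lpm} no longer applies, and no analytic semigroup on $L^1_c$ is supplied a priori, so strong continuity must be produced by hand. My plan is to verify it on the dense subspace $C_c^\infty(\R^{N+1}_+)$ and then transfer it to $L^1_c$ via the uniform bound of step (i). For $f\in C_c^\infty(\R^{N+1}_+)$ with $\mathrm{supp}\,f\subset K\subset\{y\ge \eta\}\cap B_{R_0}$, I would split $\R^{N+1}_+=B_R\cup B_R^c$ with $R=2R_0$. On $B_R$, Cauchy--Schwarz against the locally finite measure $y^c\,dz$ (integrable since $c>-1$), combined with the $L^2_c$-strong continuity of the form semigroup, yields
\[
\int_{B_R}|T_1(t)f-f|\,y^c\,dz\le\Bigl(\int_{B_R}y^c\,dz\Bigr)^{1/2}\|e^{t\mathcal L}f-f\|_{L^2_c}\longrightarrow 0.
\]
On $B_R^c$, where $f\equiv 0$, the pointwise bound of Proposition \ref{Prop stime sem+grad family} gives, for $t\le \eta^2$ (so that $(y_2/\sqrt{t}\wedge 1)^c=1$ on $K$) and $|z_1|\ge 2R_0$ (so that $|z_1-z_2|\ge |z_1|/2$ for $z_2\in K$), the estimate $|T_1(t)f(z_1)|\le C_K\|f\|_\infty\,t^{-(N+1)/2}\exp(-|z_1|^2/(4\kappa t))$; a routine gaussian tail calculation in $L^1_c$, using the substitution $z_1=\sqrt{t}\,w$, produces $\int_{B_R^c}|T_1(t)f|\,y^c\,dz=O(t^{c/2}\exp(-R_0^2/(2\kappa t)))\to 0$. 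The core difficulty is precisely this gaussian tail step: the sharp kernel bound of Proposition \ref{Prop stime sem+grad family} is tight enough to defeat the $t^{-(N+1)/2}$ prefactor outside the support of the test function, which is why the argument closes. The adjoint $\mathcal L^*$ is handled identically using the corresponding pointwise bound and form construction.
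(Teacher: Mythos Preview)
Your proof is correct, but it takes a noticeably different route from the paper's. The paper relies throughout on the $L^p_c$-\emph{contractivity} of $e^{t\mathcal L}$ coming from the Beurling--Deny criteria (Proposition~\ref{generation L2}), and never touches the pointwise kernel bounds for this proposition. For $1<p<\infty$ the paper argues softly: the pairing $\langle e^{t\mathcal L}f,g\rangle_{L^2_c}$ converges for $f,g\in C_c^\infty$ by $L^2_c$-strong continuity, and contractivity plus density upgrade this to weak continuity on $L^p_c$, which for semigroups implies strong continuity. For $p=1$ the paper uses a neat contractivity trick: writing $f=f_{|B_r}+f_{|B_r^c}$, the tail is controlled by $2\|f_{|B_r^c}\|_{L^1_c}$, while for the compactly supported piece the $B_r$-contribution goes to zero by H\"older and $L^2_c$-continuity, and the $B_r^c$-contribution is squeezed to zero because $\|e^{t\mathcal L}(f_{|B_r})\|_{L^1_c}\le \|f_{|B_r}\|_{L^1_c}$ while its $B_r$-part already converges to $\|f_{|B_r}\|_{L^1_c}$. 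Your approach instead leans on the explicit Gaussian kernel majorization (Proposition~\ref{Prop stime sem+grad family}) both for the uniform $L^p_c$-bound and, in the $p=1$ tail estimate, for the quantitative decay $O(t^{c/2}e^{-R^2/(\kappa t)})$. This buys you a self-contained, computational argument that avoids the weak-to-strong semigroup lemma and the contractivity squeeze, at the cost of invoking heavier machinery (the full kernel upper bounds of Theorem~\ref{true.kernel}) than the paper needs at this stage; the paper's argument would survive for any form-contractive semigroup, while yours is tied to the Gaussian estimates already in hand.
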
 
\begin{proof}
	We prove the statement for $\mathcal L$; the analogous claim for $\mathcal L^\ast$ follows identically. By Proposition  \ref{generation L2}, $e^{t\mathcal L}$ is  contractive in $L^p_c$ for every $1\leq p\leq \infty$: then $e^{t\mathcal L}$  clearly extends, by density, from $L^2_c\cap L^p_c$ to  $L^p_c$ and the semigroup law is inherited  from the one of $L^2_c$.  Therefore we have only to prove the  strong continuity at $0$.  
	Assume first that $1<p<\infty$ and let $f, g \in C_c^\infty (\overline{\R^{N+1}_+})$. Then as $t \to 0$
	$$
	\int_{\R^{N+1}_+} (e^{t\mathcal L}f)\, g\, y^c dy\longrightarrow \int_{\R^{N+1}_+} fgy^cdy  ,
	$$
	by the strong continuity of $e^{t\mathcal L}$ in $L^2_c$. Since $e^{t\mathcal L}$ is contractive on $L^p_c$, a   density argument then proves that the previous limit holds for every $f \in L^p_c$, $g \in L^{p'}_c$. The semigroup is then weakly continuous, hence strongly continuous. Assume now $p=1$ and let us fix, preliminarily, $f\in L^1_c\cap L^2_c$. For any $r>0$ we set $B_r=B(0,r)\times ]0,r[$ and write $f=f_{|B_r}+f_{|B_r^c}$. Then we have, using the contractivity of $e^{t\mathcal L}$ on $L^1_c$,
	\begin{align}\label{strong-cont eq 1}
		\nonumber	\|e^{t\mathcal L}f-f\|_{L^1_c}&\leq \|e^{t\mathcal L}\left(f_{|B_r}\right)-f_{|B_r}\|_{L^1_c}+\|e^{t\mathcal L}\left(f_{|B_r^c}\right)-f_{|B_r^c}\|_{L^1_c}
		\\[1.5ex]&
		\leq \|e^{t\mathcal L}\left(f_{|B_r}\right)-f_{|B_r}\|_{L^1_c}+ 2 \|f_{|B_r^c}\|_{L^1_c}.
	\end{align}
	Recalling that  the measure $\mu=y^cdz$ is locally finite  and using  the H\"older inequality we get
	\begin{align*}
		\|e^{t\mathcal L}\left(f_{|B_r}\right)-f_{|B_r}\|_{L^1_c}&=\int_{B_r}\left|e^{t\mathcal L}\left(f_{|B_r}\right)-f_{|B_r}\right|\,y^cdz+\int_{B_r^c}\left|e^{t\mathcal L}\left(f_{|B_r}\right)\right|\,y^cdz\\[1ex]
		&\leq \left(\mu\left(B_r\right)\right)^{\frac 12}\left\|e^{t\mathcal L}\left(f_{|B_r}\right)-f_{|B_r}\right\|_{L^2_c}+\int_{B_r^c}\left|e^{t\mathcal L}\left(f_{|B_r}\right)\right|\,y^cdz\\[1ex]
		&:=A_1+A_2
	\end{align*}
	which goes to $0$ as $t\to 0$. Indeed  $\ds \lim_{t\to 0} A_1=0$ by the strong continuity of  $e^{t\mathcal L}$ in $L^2_c$. For the second term we observe that, using again  the contractivity of $e^{t\mathcal L}$ on $L^1_c$ 
	\begin{align*}
		\|f_{|B_r}\|_{L^1_c}\geq \|e^{t\mathcal L}\left(f_{|B_r}\right)\|_{L^1_c}=	\int_{B_r^c}\left|e^{t\mathcal L}\left(f_{|B_r}\right)\right|\,y^cdz+\int_{B_r}\left|e^{t\mathcal L}\left(f_{|B_r}\right)\right|\,y^cdz.
	\end{align*}
	Then taking the limit in the above estimate and observing that, from the previous point, 
	$$\int_{B_r}\left|e^{t\mathcal L}\left(f_{|B_r}\right)\right|\,y^cdz\to \|f_{|B_r}\|_{L^1_c},$$
	we get $\ds \lim_{t\to 0} A_2=0$. Then it follows from \eqref{strong-cont eq 1} that
	\begin{align*}
		\limsup_{t\to 0}\|e^{t\mathcal L}f-f\|_{L^1_c}\leq 2 \|f_{|B_r^c}\|_{L^1_c}\longrightarrow 0,\qquad \text{as}\quad r\to\infty
	\end{align*}
	which proves that 	$\ds \lim_{t\to 0}\|e^{t\mathcal L}f-f\|_{L^1_c}=0$ which is the required claim for   $f\in L^1_c\cap L^2_c$. The general case follows  by a   density argument.
\end{proof}

For the case $p=\infty$ of the above Proposition we refer the reader to \cite{MNS-Caf-Schauder} for   generation of semigroup, in a special case, in spaces of continuous functions.\\

We can now prove the conservation property of the semigroups.
\begin{prop}  \label{conservation}
	Let us assume $c+1>0$. Then the heat kernel $p_\mathcal L$ of $\mathcal L$ satisfies  
	\begin{align*}
		\int_{\R^{N+1}_+}p_{\mathcal L}(t,z_1,z_2) y_2^c\, dz_2=1
		\quad \forall \ t>0, \ z_1\in\R^{N+1}_+.
	\end{align*}
	The same conservation property also holds for the kernel $p_{\mathcal L^\ast}$ of the adjoint operator $\mathcal L^\ast$.
\end{prop}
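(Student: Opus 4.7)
The strategy is to prove the integral-preservation property
\begin{equation*}
\int_{\R^{N+1}_+} e^{t\mathcal L^*} f \, y^c dz = \int_{\R^{N+1}_+} f \, y^c dz, \qquad f \in L^1_c,\ t > 0,
\end{equation*}
and then deduce the conservation of $p_\mathcal L$ by duality. Indeed, for any $g \in L^1_c$ one has, via Fubini and \eqref{adjoint-ker},
\begin{equation*}
\int e^{t\mathcal L^*} g \, y_2^c dz_2 = \int g(z_1) \Big[\int p_\mathcal L(t, z_1, z_2) y_2^c dz_2\Big] y_1^c dz_1,
\end{equation*}
so arbitrariness of $g$ yields $\int p_\mathcal L(t, z_1, z_2) y_2^c dz_2 = 1$ for a.e.~$z_1$, which upgrades to every $z_1$ by the continuity of the left-hand side in $z_1$ granted by Lemma \ref{regpb} and dominated convergence. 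The symmetric argument with $\mathcal L$ in place of $\mathcal L^*$ then gives the conservation of $p_{\mathcal L^*}$.

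To establish the integral preservation, the key a priori bound is
\begin{equation*}
\|\nabla e^{s\mathcal L^*} f\|_{L^1_c} \leq C s^{-1/2} \|f\|_{L^1_c}, \qquad s > 0,
\end{equation*}
which I would deduce from the pointwise estimate $|\nabla e^{s\mathcal L^*} f| \leq C s^{-1/2} S^{0,-c}(s)|f|$ of Proposition \ref{Prop stime sem+grad family}, combined with the uniform $L^1_c$-boundedness of the operators $S^{0,-c}(s)$ (Proposition \ref{Boundedness theta} and \cite[Proposition A.1]{MNS-Caf-Schauder}).

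Fix $f \in C_c^\infty(\overline{\R^{N+1}_+})$ and set $u(s) = e^{s\mathcal L^*} f$. Analyticity of the $L^2_c$-semigroup (Proposition \ref{generation L2}) gives $u(s) \in D(\mathcal L^*)$ for every $s > 0$, and the form definition \eqref{adjoint} yields
\begin{equation*}
\frac{d}{ds} \int u(s) \, \eta_m \, y^c dz = \langle \mathcal L^* u(s), \eta_m\rangle_{L^2_c} = -\mathfrak a^*(u(s), \eta_m)
\end{equation*}
for any $\eta_m \in H^1_c$. I would take $\eta_m(z) = \chi(|z|/m)$ with $\chi \in C_c^\infty$ equal to $1$ on $[0,1]$ and $0$ on $[2,\infty)$, which gives $\eta_m \in \mathcal C$ (see \eqref{defC}), $0 \leq \eta_m \leq 1$, $\eta_m \to 1$ pointwise, $|\nabla \eta_m| \leq C/m$ and the Neumann compatibility $D_y \eta_m(\cdot, 0) = 0$. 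The gradient bound above then gives $|\mathfrak a^*(u(s), \eta_m)| \leq (1 + 2|a|)(C/m) \|\nabla u(s)\|_{L^1_c} \leq C' m^{-1} s^{-1/2} \|f\|_{L^1_c}$, integrable on $(0,t]$. Integrating on $(\epsilon, t]$, letting $\epsilon \to 0^+$ using the strong continuity of $e^{s\mathcal L^*}$ (Proposition \ref{strong-cont}), and then $m \to \infty$ by dominated convergence (with $u(t), f \in L^1_c$), produces $\int u(t) y^c dz = \int f y^c dz$. Density of $C_c^\infty(\overline{\R^{N+1}_+})$ in $L^1_c$ and $L^1_c$-contractivity extend the identity to all $f \in L^1_c$.

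The main obstacle is precisely the $L^1_c$-gradient estimate $\|\nabla e^{s\mathcal L^*} f\|_{L^1_c} \lesssim s^{-1/2} \|f\|_{L^1_c}$: it is what makes $s \mapsto \mathfrak a^*(u(s), \eta_m)$ integrable at $s = 0$ while vanishing like $1/m$, so that the $m \to \infty$ limit of the form term can be exchanged with the time integral. Once this bound is secured, the rest is a routine approximation based on the form identity and the strong $L^1_c$-continuity established in Proposition \ref{strong-cont}.
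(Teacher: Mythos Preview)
Your proof is correct and follows essentially the same route as the paper's: both establish the $L^1_c$-integral preservation of the semigroup by testing the form identity against an expanding sequence of cutoffs and invoking the key gradient bound $\|\nabla e^{s\cdot} f\|_{L^1_c} \leq Cs^{-1/2}\|f\|_{L^1_c}$ from Proposition \ref{Prop stime sem+grad family} to make the boundary term vanish, then appeal to the strong $L^1_c$-continuity of Proposition \ref{strong-cont}. One harmless inaccuracy: your radial cutoff $\chi(|z|/m)$ does not actually lie in $\mathcal C$ (its $y$-derivative is not identically zero on a full strip $\{y\leq\delta\}$), but this is irrelevant since only $\eta_m\in H^1_c$ is needed; the paper sidesteps this by taking a product cutoff $\vartheta(x)\theta(y)$ with $\theta\equiv 1$ near $y=0$.
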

{\sc Proof.}  
We prove the statement for $P_{\mathcal L}$; the analogous claim for $p_{\mathcal L^\ast}$ follows identically. We prove, equivalently, that for every $f\in L^2_c\cap L^1_c$ one has
\begin{equation*}
	\int_{\R^{N+1}_+}e^{t{\mathcal L}}f(z) y^c\, dz=\int_{\R^{N+1}_+}f(z) y^c\, dz
	\quad \forall \ t>0.
\end{equation*}
We fix a  cut-off function $\eta(z)=\vartheta(x)\theta(y)$ where $\vartheta\in C_c^{\infty}\left( B(0, 2)\right)$ is a  cut-off function in the $x$-variables which  satisfies
$0\le \vartheta\le 1$, $\vartheta=1$ in $B(0,1)\subset\R^N$ and  $\theta\in C_c^{\infty }([0,2[)$ such that  $0\leq \theta \leq 1$, $\theta(y)=1$ for $y\leq 1$. 
We also set, for every $n>0$, $\eta_n(z)=\eta\left(\frac{z}{n}\right)$. 
Then  differentiating  under the integral sign and recalling Section \ref{L2},  we have for every $t>0$
\begin{align*}
	I_n(t)&=\frac{d}{dt}\int_{\R^{N+1}_+}e^{t\mathcal L}f(z)\eta_n(z) y^c\, dz=\int_{\R^{N+1}_+}\mathcal L e^{t\mathcal L}f(z)\eta_n(z) y^c\, dz=-\mathfrak{a}(e^{t\mathcal L^*}f, \eta_n)\\
	&=-\int_{\R^{N+1}_+}\nabla e^{t\mathcal L}f(z)\cdot\nabla\eta_n(z) y^c\, dz-2\int_{\R^{N+1}_+}D_y e^{t\mathcal L}f(z)a\cdot\nabla_x\eta_n(z) y^c\, dz.
\end{align*} 
Therefore, since $|\nabla \eta_n|\leq \frac C n$, using Propositions \ref{Prop stime sem+grad family} and  \ref{Boundedness theta}  we get for some possibly different positive constants $C>0$
\begin{align*} 
	|I_n(t)|\leq\frac{C}{n}\|\nabla e^{t\mathcal L}f\|_{L^1_c}\leq \frac{C}{n\sqrt t}\|S^{0,-c}(t)|f|\|_{L^1_c}\leq \frac{C}{n\sqrt t}\|f\|_{L^1_c}
\end{align*}
which implies $\ds \lim_{n\to \infty}I_n(t)=0$. Then using Lebesgue's Theorem it follows that for every $t, s>0$
\begin{align*}
	&\int_{\R^{N+1}_+}e^{t\mathcal L}f(z)\ y^c\, dz-\int_{\R^{N+1}_+}e^{s\mathcal L}f(z) y^c\, dz\\&=\lim_{n\to\infty}\left(\int_{\R^{N+1}_+}e^{t\mathcal L}f(z)\eta_n(z) y^c\, dz-\int_{\R^{N+1}_+}e^{s\mathcal L}f(z)\eta_n(z) y^c\, dz\right)=\lim_{n\to\infty}\int_s^tI_n(r)\, dr=0
\end{align*}
and hence $$\int_{\R^{N+1}_+}e^{t\mathcal L}f(z)\ y^c\, dz=\int_{\R^{N+1}_+}e^{s\mathcal L}f(z) y^c\, dz,\qquad t,\ s>0.$$
Letting $s$ go to $0$, the claim follows by the strong continuity of  $e^{t{\mathcal L}}$ in $L^1_c$  proved in  Proposition \ref{strong-cont}.
\qed

\section{Kernel lower bounds}\label{section kernel lower}
This section, which contains the main result of the paper, is devoted to prove lower  estimates for the heat kernel of $\mathcal L$, defined in \eqref{La}, of the same form of the upper ones  of Theorem \ref{true.kernel}. We need some preparation.

We work in $\R^{N+1}_+$ endowed with the standard euclidean metric  and, for $z_0=(x_0,y_0)\in\R^{N+1}_+$ and $r>0$, we consider the cylindric balls 
$$Q(z_0,r):=B(x_0,r)\times [y_0,y_0+r)$$  in place of the standard balls $B(z_0,r)$; we keep the same notation to denote $Q(0,r):=B(0,r)\times [0,r[$. For $c+1>0$ we consider the weighted measure $\mu=y^c\,dx\,dy$ and we write $V (z_0,r):=\mu\left(Q(z_0,r)\right)$ to denote the measure  of the cylindric balls given by
\begin{align*}
	V (z_0,r)=\int_{Q(z_0,r)} y^c dz=w_Nr^{N}\int_{y_0}^{y_0+r} y^c dy.
\end{align*}
We need the following  elementary lemma which guarantees that $\mu$ is a doubling measure and which is proved in  {\cite[Lemma 5.2]{MNS-PerturbedBessel}}. 

\begin{lem}\label{Misura palle}
	Let $c+1>0$. 
	Then one has 
	\begin{align*}
		V(z_0,r)&=r^{N+1+c}\,V(\frac{z_0}r,1),\qquad 
		V(z_0,r)\simeq r^{N+1+c}\left(\frac{y_0}{r}\right)^{c}\left(\frac{y_0}{r}\wedge 1\right)^{-c}.
	\end{align*}
	In particular the function $V$ satisfies, for some  constants $C\geq 1$, the doubling condition
	\begin{align*}
		\frac{V(z_0,s)}{V(z_0,r)}\leq C \left(\frac{s}{r}\right)^N\left(1 \vee \frac{s}r\right)^{1+c^+},\qquad \forall\, s,  r>0.
	\end{align*}
\end{lem}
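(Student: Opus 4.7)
The lemma is a computational fact about the one–dimensional weight $y^c$ restricted to intervals of length $r$; everything reduces to estimating $\int_s^{s+1}\eta^c\,d\eta$ with $s=y_0/r$. I would organise the proof in three short steps.

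\emph{Step 1 (scaling).} Starting from $V(z_0,r)=w_N r^N\int_{y_0}^{y_0+r}y^c\,dy$, perform the change of variables $y=r\eta$. This immediately gives $V(z_0,r)=r^{N+1+c}\,w_N\int_{y_0/r}^{y_0/r+1}\eta^c\,d\eta=r^{N+1+c}V(z_0/r,1)$, which is the first identity.

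\emph{Step 2 (sharp asymptotics).} Set $s=y_0/r$. The claim $V(z_0,r)\simeq r^{N+1+c}\, s^c\,(s\wedge 1)^{-c}$ amounts to showing
\[
\int_s^{s+1}\eta^c\,d\eta\simeq\begin{cases} 1 & \text{if }0\le s\le 1,\\ s^c & \text{if } s\ge 1.\end{cases}
\]
For $s\ge 1$, the integrand is monotone, so the integral is pinched between $s^c$ and $(s+1)^c\le 2^{|c|}s^c$. For $0\le s\le 1$ one uses the explicit antiderivative: $\int_s^{s+1}\eta^c\,d\eta=\bigl((s+1)^{c+1}-s^{c+1}\bigr)/(c+1)$, and the hypothesis $c+1>0$ guarantees this expression is uniformly bounded above and below by positive constants on $[0,1]$ (monotone in $s$, finite and nonzero at the endpoints). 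Combining the two cases and observing that $s^c(s\wedge 1)^{-c}=\max(s,1)^c$ gives the concise equivalent form
\[
V(z_0,r)\;\simeq\; r^{N+1}\max(y_0,r)^c,
\]
which is the convenient shape for the doubling estimate.

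\emph{Step 3 (doubling).} Using the compact formula from Step 2,
\[
\frac{V(z_0,s)}{V(z_0,r)}\;\simeq\;\Bigl(\frac{s}{r}\Bigr)^{N+1}\Bigl(\frac{\max(y_0,s)}{\max(y_0,r)}\Bigr)^{c}.
\]
I would then case–split on the sign of $c$ and on whether $s\le r$ or $s\ge r$. When $s\le r$: for $c\ge 0$ both factors are $\le 1$; for $-1<c<0$ the elementary bound $\max(y_0,s)/\max(y_0,r)\ge s/r$ gives the weighted ratio $\le (s/r)^c$, and $(s/r)^{1+c}\le 1$ since $c+1>0$. When $s\ge r$: for $c\ge 0$ the inequality $\max(y_0,s)/\max(y_0,r)\le s/r$ yields the factor $(s/r)^c=(s/r)^{c^+}$; for $c<0$ the same ratio is $\ge 1$ so the weighted factor is $\le 1=(s/r)^{c^+}$. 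Assembling both cases produces $V(z_0,s)/V(z_0,r)\le C(s/r)^N(1\vee s/r)^{1+c^+}$, which is the doubling bound claimed.

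No step is genuinely hard; the only place demanding care is keeping track of the sign of $c$ and of the regime $s\lessgtr y_0$ in Step 3. Rewriting the equivalence in the more symmetric form $r^{N+1}\max(y_0,r)^c$ avoids most of the bookkeeping and keeps the case analysis short.
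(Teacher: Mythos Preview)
Your proof is correct and complete. The paper itself does not give a proof of this lemma; it simply cites \cite[Lemma 5.2]{MNS-PerturbedBessel}. Your direct computation---reducing to the one-variable integral $\int_s^{s+1}\eta^c\,d\eta$ via the scaling $y=r\eta$, then reading off the two regimes $s\le1$ and $s\ge1$, and finally using the compact form $V(z_0,r)\simeq r^{N+1}\max(y_0,r)^c$ for the doubling inequality---is exactly the natural elementary argument one expects in the cited reference, and all the case splits in Step~3 are handled correctly.
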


\begin{os} \label{doubling}
	The previous inequality  implies that, for some positive constant $C>0$, one has
	$$V(z_0,2r)\leq C V(z_0,r),\qquad \forall z_0\in\ R^{N+1}_+,\ r>0.$$
\end{os}

In order to prove lower heat kernel estimates we also need to rewrite the upper  bounds of Theorem \ref{true.kernel} in terms of  the measure of the cylindric balls.
\begin{prop}\label{kernel tilde} Let us assume $c+1>0$. Then the heat kernel $p_\mathcal L$ of $\mathcal L$, written with respect to the measure $y^c\ dx\,dy$, satisfies  
	\begin{align}\label{up kernel measure}
		p_{\mathcal L}(t,z_1,z_2)
		&\leq \frac{C}{V\left(z_1,\sqrt t\right)^{\frac 1 2}V\left(z_2,\sqrt t\right)^{\frac 1 2}}\exp\left(-\frac{|z_1-z_2|^2}{\kappa t}\right),\quad \forall \ t>0, \ z_1,\ z_2\in\R^{N+1}_+.
	\end{align}
\end{prop}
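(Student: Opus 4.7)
The plan is a direct algebraic comparison. The upper estimate of Theorem~\ref{true.kernel} gives, for $t>0$,
\[
p_{\mathcal L}(t,z_1,z_2)\leq C\,t^{-\frac{N+1}{2}}\,y_1^{-\frac c2}\!\left(1\wedge\tfrac{y_1}{\sqrt t}\right)^{\frac c2}\,y_2^{-\frac c2}\!\left(1\wedge\tfrac{y_2}{\sqrt t}\right)^{\frac c2}\exp\!\left(-\tfrac{|z_1-z_2|^2}{kt}\right),
\]
and so it suffices to check that the polynomial prefactor agrees, up to a multiplicative constant, with $V(z_1,\sqrt t)^{-1/2}V(z_2,\sqrt t)^{-1/2}$.

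To this end I would apply Lemma~\ref{Misura palle} with $r=\sqrt t$ and a generic $z_0=(x_0,y_0)$, obtaining
\[
V(z_0,\sqrt t)\simeq t^{\frac{N+1+c}{2}}\left(\tfrac{y_0}{\sqrt t}\right)^{c}\left(\tfrac{y_0}{\sqrt t}\wedge 1\right)^{-c},
\]
and consequently
\[
V(z_0,\sqrt t)^{-\frac12}\simeq t^{-\frac{N+1+c}{4}}\left(\tfrac{y_0}{\sqrt t}\right)^{-\frac c2}\left(\tfrac{y_0}{\sqrt t}\wedge 1\right)^{\frac c2}=t^{-\frac{N+1}{4}}\,y_0^{-\frac c2}\left(1\wedge\tfrac{y_0}{\sqrt t}\right)^{\frac c2},
\]
after absorbing the factor $t^{-c/4}$ into $y_0^{-c/2}$.

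Taking the product of this identity at $z_0=z_1$ and $z_0=z_2$ and multiplying by the Gaussian factor, I recover exactly the right-hand side of \eqref{up kernel measure}, so the desired inequality follows from Theorem~\ref{true.kernel} with the same exponential constant $\kappa=k$. There is essentially no obstacle here: the statement is a change of bookkeeping from explicit powers of $y_i$ and $t$ to the geometric quantities $V(z_i,\sqrt t)$, and the only nontrivial ingredient—the two-sided control of $V(z_0,r)$ in Lemma~\ref{Misura palle}—is already available.
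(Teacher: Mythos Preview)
Your argument is correct and is precisely the approach the paper takes: the proof there simply reads ``The required estimates follows from Theorems~\ref{true.kernel} and Lemma~\ref{Misura palle},'' and your computation spells out exactly that identification of the prefactor $t^{-\frac{N+1}{2}}y_i^{-c/2}(1\wedge y_i/\sqrt t)^{c/2}$ with $V(z_i,\sqrt t)^{-1/2}$.
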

\begin{proof}
	The required estimates follows from    Theorems \ref{true.kernel} and Lemma \ref{Misura palle}.
\end{proof}
\medskip

In what follows we aim to prove that the similar lower bound
\begin{align}\label{low kernel measure}
	p_{\mathcal L}(t,z_1,z_2)
	&\geq \frac{C}{V\left(z_1,\sqrt t\right)^{\frac 1 2}V\left(z_2,\sqrt t\right)^{\frac 1 2}}\exp\left(-\frac{|z_1-z_2|^2}{\kappa t}\right),\quad \forall \ t>0, \ z_1,\ z_2\in\R^{N+1}_+
\end{align}
holds for the heat kernel of $\mathcal L$.\\

Before to prove this result, let us discuss, in an abstract setting, a  well established  strategy (see e.g. \cite[Chapter 7, Section 7.8]{Ouhabaz}) for deducing lower kernel estimates  for a non-negative self-adjoint operator $A$ acting  on an $L^2$-space $L^2\left(X,\mu\right)$; here $X$ is a homogeneous space i.e. a metric space $(X,d)$ endowed with a doubling measure $\mu$. Assume that the semigroup $e^{tA}$ has a kernel $p(t,z_1,z_2)$ which satisfies the conservation property as the one in Proposition \ref{conservation} and  some Gaussian upper bounds of the same form as in  \eqref{up kernel measure} (in this setting $V(z,r)$ is the measure of the $X$-balls and $|z_1-z_2|$ is the $X$-distance $d(z_1,z_2)$). Then the analogous lower bound can be derived from the conservation property of the semigroup, the Gaussian upper bound and the H\"older continuity of $p$ (or, a fortiori, from some gradient estimates on $p$). To be precise, the Gaussian lower bound   \eqref{low kernel measure} can be obtained by:
\begin{itemize}
	\item[(i)] proving, preliminarily, the  on-diagonal lower  estimates 
	\begin{align}\label{lower  diagonal}
		p(t,z,z)\geq \frac{C}{V(z,\sqrt t)},\qquad z\in X,\ t>0,
	\end{align}
	using the conservation  property of the kernel and the upper estimates \eqref{up kernel measure} (see e.g. \cite[Proposition 7.28]{Ouhabaz});
	\item[(ii)] proving the off-diagonal bound \eqref{low kernel measure}, adding also the Gaussian term, using the H\"older continuity of $p$ to go outside of the diagonal and an iterative argument based on the validity of the so-called chain condition  for $X$ (see e.g. \cite[Theorem 7.29]{Ouhabaz}).
\end{itemize}

In our case, due to the non self-adjointness of $\mathcal L$, this strategy does not work. Then, inspired by the method of Nash \cite{Nash,Fabes-Stroock}, we  prove  the on-diagonal lower estimate \eqref{lower  diagonal} by proving in Proposition \ref{log-kernel-est} the so-called  ``G-bound'', namely a lower bound of the integral of the logarithm of the heat kernel  with respect to a  weighted Gaussian measure and  whose validity relies on some suitable Poincar\'{e} inequalities. Some difficulties appear: in order to compensate the non self-adjointness of $\mathcal L$ we need to prove all the statements for both $\mathcal L$ and $\mathcal L^\ast$ and we need also to take control of the  weighted measure $y^cdz$ whose density can be either singular or degenerate at $y=0$ and at infinity. Then, thanks to the gradient estimates  proved in Section \ref{section gradient estimate}, we can go outside the diagonal proving the off-diagonal lower bound \eqref{low kernel measure}.\\

The proof is structured in many steps and requires some preliminary results.
\begin{os}\label{equivalent V up low}
	We point out that, by \eqref{equiv estimates up low} and Lemma \ref{Misura palle}     the  estimates 
	\begin{align*}
		p_{\mathcal L}(t,z_1,z_2)
		&\simeq \frac{C}{V\left(z_1,\sqrt t\right)^{\frac 1 2}V\left(z_2,\sqrt t\right)^{\frac 1 2}}\exp\left(-\frac{|z_1-z_2|^2}{\kappa t}\right)
	\end{align*}
	are equivalent, up to change the constants $C,k>0$,  to
	\begin{align*}
		p_{{\mathcal  L}}(t,z_1,z_2)
		\simeq \frac{C}{V\left(z_i,\sqrt t\right)}\,\exp\left(-\dfrac{|z_1-z_2|^2}{kt}\right),\qquad i=1,2.
	\end{align*}	
\end{os}

\smallskip

We start by proving, preliminarily, the lower estimate \eqref{low kernel measure} far away from the boundary i.e. for   $z_1=(x_1,y_1),\ z_2=(x_2,y_2)\in\R^{N+1}_+$  satisfying $\frac{y_1}{\sqrt{t}}\geq r,\ \frac{y_2}{\sqrt{t}}\geq r$.
Its proof follows  by domination with a uniformly elliptic  operator with bounded coefficients.

\begin{lem}\label{Lemma uniformly outside strip}
	Let $r>0$ and let us consider the operator  $\mathcal A=\Delta_x+a\cdot\nabla_xD_y+B_y$ acting in the $L^2$-space  $L^2\left(\R^{N}\times \left[r,+\infty\right[,y^cdz\right)$ endowed with Dirichlet boundary conditions. Then $\mathcal A$  generates an analytic   semigroup of integral operator which is positive for $t>0$ and  whose kernel $p_0(t,z_1,z_2)$, taken with respect to the Lebesgue measure, satisfies, for some positive constants $C,c>0$,
	\begin{equation}\label{fuori palla 1}
		p_0(t,z_1,z_2)\geq C\left(1\wedge \frac{y_1-r}{\sqrt{t}}\right)\left(1\wedge \frac{y_2-r}{\sqrt{t}}\right)t^{-\frac{N+1}{2}} \exp\left(-c\frac{|z_1-z_2|^2}{ t}\right)
	\end{equation} 
	for any  $t>0$, $y_1\geq r$, $y_2\geq r$.
\end{lem}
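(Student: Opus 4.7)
The plan is to reduce the lemma to a classical two-sided Aronson-type Gaussian estimate for a uniformly elliptic operator on a half-space with Dirichlet boundary data. The key observation is that on the domain $\{y\geq r\}$ the singular drift coefficient $c/y$ satisfies $|c/y|\leq |c|/r$, while the constant-coefficient second-order part corresponds to the positive-definite matrix $\bigl(\begin{smallmatrix}I & a/2\\ a^T/2 & 1\end{smallmatrix}\bigr)$ (uniformly elliptic since $|a|<1$). Up to the translation $\tilde y:=y-r$, we are therefore dealing with an honest uniformly elliptic operator with smooth bounded coefficients on the classical half-space $\{\tilde y\geq 0\}$ with Dirichlet condition at $\tilde y=0$.

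Generation of a positive contractive analytic semigroup would follow by repeating the form construction of Section \ref{L2}: the Dirichlet form
$$\mathfrak b(u,v)=\int_{\R^N\times(r,\infty)}\bigl(\langle \nabla u,\nabla\bar v\rangle+D_yu\,a\cdot\nabla_x\bar v\bigr)\,y^c\,dz$$
on $H^1_0(\R^N\times(r,\infty);\,y^c dz)$ is continuous, accretive and sectorial (verbatim as in Proposition \ref{prop-form}, the only change being the Dirichlet condition at $y=r$). Ultracontractivity and the integral-operator representation then follow by the standard Nash/Gagliardo--Nirenberg argument as in \cite{Negro-Spina-SingularKernel}, using that on the truncated domain $y^c$ is locally bounded away from $0$ and $\infty$, so classical (unweighted) Sobolev embeddings on the half-space are available.

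For the decisive lower bound \eqref{fuori palla 1} the strategy is to invoke the classical lower Aronson bound for (non-symmetric) uniformly elliptic operators with bounded coefficients on the half-space with Dirichlet data. The boundary factor $(1\wedge(y_i-r)/\sqrt t)$ is the standard Dirichlet-killing correction and the exponential is the usual off-diagonal Gaussian tail. Since the form theory produces the kernel with respect to $y^c dz$, one converts to Lebesgue by multiplying by $y_2^c$; on $\{y\geq r\}$ this factor is locally comparable to a constant, and any residual $y$-dependence can be absorbed into the Gaussian via Remark \ref{oss equiv estimate} at the price of enlarging the constant in the exponent.

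The main obstacle is justifying the boundary-corrected Aronson lower bound for a \emph{non-symmetric} operator on a half-space, since most textbook references treat either the whole space or the symmetric divergence-form case. A clean workaround is a Girsanov-type reweighting that removes the bounded drift $\frac{c}{y}D_y$ and reduces us to a symmetric uniformly elliptic operator on $\{y\geq r\}$ with Dirichlet data at $y=r$, for which the Dirichlet-corrected two-sided Gaussian estimate is classical; the exponential weighting introduces only bounded multiplicative factors on $\{y\geq r\}$ and is absorbed into the constants $C$ and $c$.
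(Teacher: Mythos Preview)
Your overall route coincides with the paper's: set up the Dirichlet form $\mathfrak a_0$ on $H^1_{c,0}(\R^N\times[r,\infty))$, note that on $\{y\ge r\}$ the operator is uniformly elliptic with bounded (drift) coefficients, and then invoke a classical two-sided Dirichlet heat-kernel estimate on the half-space. The paper does precisely this and simply cites \cite[Theorem 3.8]{cho}, which already covers the \emph{non-symmetric} uniformly parabolic case on smooth domains and gives the boundary-corrected bound \eqref{fuori palla 1} directly.

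The point where your write-up departs from the paper, and where a genuine gap appears, is the ``Girsanov-type reweighting'' you propose as a substitute for that citation. A Doob/$h$-transform makes $\mathrm{div}(A\nabla)+b\cdot\nabla$ symmetric only when $b=A\nabla\phi$ for some $\phi$. Here the principal matrix is $A=\bigl(\begin{smallmatrix}I&a\\a^{t}&1\end{smallmatrix}\bigr)$ and the drift is $b=\frac{c}{y}\,e_{N+1}$; since $A^{-1}e_{N+1}$ has nonzero $x$-components whenever $a\neq0$, $b$ is \emph{not} an $A$-gradient, and no multiplicative reweighting turns $\mathcal A$ into a symmetric operator. (Equivalently: even with the natural weight $y^c$, the form $\mathfrak a_0$ is non-symmetric because of the asymmetric cross term $2D_yu\,a\cdot\nabla_x\bar v$.) A probabilistic Cameron--Martin--Girsanov argument only yields comparison constants that grow like $e^{Ct}$, which does not give \eqref{fuori palla 1} uniformly in $t>0$. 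So the reduction to the symmetric case does not go through as stated; the paper sidesteps this entirely by invoking a result that handles the non-symmetric case from the outset.
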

{\sc Proof.} 
We consider  in $L^2\left(\R^{N}\times \left[r,+\infty\right[,y^cdz\right)$  the  sesquilinear form 
\begin{align*}
	\mathfrak{a}_0(u,v)
	&:=
	\int_{\R^N \times \left[r,+\infty\right[}\langle \nabla u, \nabla \overline{v}\rangle\,y^{c} dx\,dy+2\int_{\R^N \times \left[r,+\infty\right[} D_yu\, a\cdot\nabla_x\overline{v}\,y^{c} dx\,dy , \\[1.5ex]
	D(\mathfrak{a}_0)&=H_{c,0}^1\left(\R^N \times \left[r,+\infty\right[\right):=\ov{C_c^\infty\left(\R^N \times \left]r,+\infty\right[\right)}
\end{align*}
where, in the latter equality, the closure is taken in the norm of $H^1_c$. Recalling Section \ref{L2}, $\mathfrak{a}_0$ is the restriction of $\mathfrak{a}$ to the space $H_{c,0}^1\left(\R^N \times \left[r,+\infty\right[\right)\hookrightarrow H^1_c$ and therefore inherits all its properties stated in Proposition \ref{prop-form}; moreover  $\mathcal A$ is the operator associated with $\mathfrak{a}_0$. The generation results for $\mathcal A$ then follows by classical results on forms as in \cite[Proposition 2.2]{Negro-Spina-SingularKernel}.
The existence of the kernel as well as its  estimate follows by \cite[Theorem 3.8]{cho}.\qed

\begin{prop} \label{kernel-exterior}
	Let $r>0$, $c+1>0$. Then there exists a positive constant $C=C(r)>0$ such that 
	\begin{align*}
		p_{\mathcal L}(t,z_1,z_2)\geq \frac{C}{V(z_1,\sqrt t)^\frac{1}{2}V(z_2,\sqrt t)^\frac{1}{2}}\exp\left(-c\frac{|z_1-z_2|^2}{ t}\right)
	\end{align*}
	for every $t>0$ and every $z_1,\ z_2\in\R^{N+1}_+$ satisfying $\frac{y_1}{\sqrt{t}}\geq r,\ \frac{y_2}{\sqrt{t}}\geq r$.
\end{prop}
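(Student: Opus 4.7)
The strategy is to dominate $p_{\mathcal L}$ from below by the Dirichlet heat kernel on a slightly smaller strip $\{y\geq r_0\}$ with $r_0<r$, where the operator is uniformly elliptic with bounded coefficients, and then invoke the explicit Gaussian lower bound of Lemma \ref{Lemma uniformly outside strip}. By the scaling identity \eqref{scaling 1} for $p_{\mathcal L}$ and the companion rule $V(z,s)=s^{N+1+c}V(z/s,1)$ from Lemma \ref{Misura palle}, both sides of the claimed inequality transform identically under $z_i\mapsto z_i/\sqrt t$, so it suffices to prove the estimate at $t=1$ for $y_1,y_2\geq r$. Fixing $r_0=r/2$ and letting $\mathcal A$ denote the operator of Lemma \ref{Lemma uniformly outside strip} with its $r$ replaced by $r_0$, the associated form $\mathfrak a_0$ is exactly the restriction of $\mathfrak a$ in \eqref{form a def} to the closed ideal $H^1_{c,0}(\R^N\times[r_0,+\infty[)$ of $H^1_c(\R^{N+1}_+)$. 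The standard form-ideal domination principle (see e.g.\ \cite[Ch.~2]{Ouhabaz}), together with the positivity of $e^{t\mathcal L}$ in Proposition \ref{generation L2}, then gives $0\leq e^{t\mathcal A}f\leq e^{t\mathcal L}f$ for every $f\geq 0$ supported in the strip. Since $p_0$ is written with respect to the Lebesgue measure whereas $p_{\mathcal L}$ is taken with respect to $y^c\,dz$, this translates at the kernel level into
\[
p_{\mathcal L}(t,z_1,z_2)\geq y_2^{-c}\,p_0(t,z_1,z_2)\qquad\text{for }y_1,y_2\geq r_0.
\]

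At $t=1$ and $y_i\geq r$, one has $y_i-r_0\geq r/2$, so both factors $(1\wedge(y_i-r_0))$ in \eqref{fuori palla 1} are bounded below by a positive constant depending only on $r$. Combined with the previous display this yields $p_{\mathcal L}(1,z_1,z_2)\geq C(r)\,y_2^{-c}\exp(-c|z_1-z_2|^2)$. Repeating the argument for the adjoint $\mathcal L^\ast$ (whose form $\mathfrak a^\ast$ from Section \ref{L2} restricts analogously to the strip, and for which the conclusion of Lemma \ref{Lemma uniformly outside strip} goes through verbatim) and then using the identity $p_{\mathcal L}(t,z_1,z_2)=p_{\mathcal L^\ast}(t,z_2,z_1)$ from \eqref{adjoint-ker}, one also obtains $p_{\mathcal L}(1,z_1,z_2)\geq C(r)\,y_1^{-c}\exp(-c|z_1-z_2|^2)$. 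Taking the maximum of the two bounds and using $\max(a,b)\geq\sqrt{ab}$ with $a=y_1^{-c}$, $b=y_2^{-c}$ produces the symmetric estimate
\[
p_{\mathcal L}(1,z_1,z_2)\geq C(r)\,(y_1y_2)^{-c/2}\,\exp\bigl(-c|z_1-z_2|^2\bigr).
\]

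By Lemma \ref{Misura palle}, $V(z_i,1)\simeq y_i^c(y_i\wedge 1)^{-c}$, and for $y_i\geq r$ the factor $(y_i\wedge 1)^{-c}$ lies between two positive constants depending only on $r$; hence $(y_1y_2)^{-c/2}$ is comparable to $V(z_1,1)^{-1/2}V(z_2,1)^{-1/2}$ up to constants depending on $r$, and scaling back to general $t$ via the first paragraph finishes the proof. The main technical point is the form-ideal domination $e^{t\mathcal A}\leq e^{t\mathcal L}$ in the weighted $L^2_c$-setting, i.e.\ that extension by zero embeds $H^1_{c,0}(\R^N\times[r_0,+\infty[)$ as a lattice ideal of $H^1_c(\R^{N+1}_+)$; since we are working away from the singular boundary (where the weight $y^c$ is bounded and strictly positive), this is standard once $c+1>0$ and is the only step where the degeneracy or singularity of $y^c$ interacts non-trivially with the argument.
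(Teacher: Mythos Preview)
Your proof is correct and follows essentially the same route as the paper: reduce to $t=1$ by scaling, apply form-ideal domination to compare $e^{t\mathcal L}$ with the Dirichlet semigroup on the strip $\{y\geq r/2\}$, and then invoke the lower bound of Lemma \ref{Lemma uniformly outside strip}. The only difference is in the final symmetrization: the paper obtains the one-sided bound $p_{\mathcal L}(t,z_1,z_2)\gtrsim V(z_2,\sqrt t)^{-1}\exp(-c|z_1-z_2|^2/t)$ and then invokes Remark \ref{equivalent V up low} (i.e.\ the pointwise equivalence \eqref{equiv estimates up low}) to pass to the symmetric $V(z_1,\sqrt t)^{-1/2}V(z_2,\sqrt t)^{-1/2}$ form, whereas you reprove the bound for $\mathcal L^\ast$ and use $\max(y_1^{-c},y_2^{-c})\geq (y_1y_2)^{-c/2}$. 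Both are valid; the paper's route is shorter since the needed equivalence is already available, while yours avoids appealing to that remark at the cost of running the domination argument twice.
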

{\sc Proof.} Let us suppose, for simplicity,  $r=1$. The general case follows similarly.
Let us set $\R^{N+1}_{\geq \frac 1 2}:=\R^{N}\times \left[\frac{1}{2},+\infty\right[$. We consider the uniformly elliptic operator $\mathcal A$ of Lemma \ref{Lemma uniformly outside strip}, in the case $r=\frac 1 2$,  acting in $L^2_c\left(\R^{N+1}_{\geq \frac 1 2}\right):=L^2\left(\R^{N+1}_{\geq \frac 1 2},y^cdz\right)$ and endowed with Dirichlet boundary conditions. By construction the operators $\mathcal A$ and $\mathcal L$ are associated respectively with the sesquilinear forms $\mathfrak{a}_0$, $\mathfrak{a}$.  Moreover 
\begin{align*}
	\mathfrak{a}_0=\mathfrak{a}_{|D(\mathfrak{a}_0)}\qquad\text{and}\qquad 	D(\mathfrak{a}_0)=H_{c,0}^1\left(\R^{N+1}_{\geq \frac 1 2}\right)\hookrightarrow H^1_c=D(\mathfrak{a}).
\end{align*} 
Then, under the notation of  \cite[Section 4]{Manavi-Vogt-Voigt}, the previous relations imply that $D(\mathfrak{a}_0)$ is an ideal of $D(\mathfrak{a})$  and moreover we have, in particular,
\begin{align*}
	\mathfrak{a}_0(u,v)&= \mathfrak{a}(u,v),\qquad \text{for any}\quad 0\leq u,v\in D(\mathfrak{a}_0).
\end{align*} 
Then, recalling that $\mathcal A$ and $\mathcal L$ generate positive semigroups, we  apply \cite[Corollary 4.2]{Manavi-Vogt-Voigt} thus obtaining  
\begin{align*}
	\left|e^{t\mathcal A}f\right|\leq e^{t\mathcal L}|f|,\qquad \forall f\in L^2_c\left(\R^{N+1}_{\geq \frac 1 2}\right)
\end{align*} 
which in terms of their heat kernels rewrites as
\begin{align*}
	p_{\mathcal L}(t,z_1,z_2)\geq p_0(t,z_1,z_2)y_2^{-c},\qquad t>0,\ z_1,z_2\in  \R^{N+1}_{\geq \frac 1 2},
\end{align*}
(note that $p_{\mathcal L}$ is written with respect to the measure $y^cdz$ and  $p_0$ with respect to the Lebesgue measure). Then specializing the previous inequality for $t=1$ and $z_1,z_2\in \R^N\times[1,+\infty[$ and using  \eqref{fuori palla 1},  we get for some positive constant $C,c>0$
$$p_{\mathcal L}(1,z_1,z_2)\geq p_0(1,z_1,z_2)y_2^{-c}\geq Cy_2^{-c}\exp\left(-c|z_1-z_2|^2\right).$$
Finally, using  the scaling property \eqref{scaling} and Lemma \ref{Misura palle}, we get 
\begin{align*}
	p_{\mathcal{L}}(t,z_1,z_2)&=t^{-\frac{N+1+c}2}p_{\mathcal{L}}\left(1,\frac{z_1}{\sqrt t},\frac{z_2}{\sqrt t}\right)\geq Ct^{-\frac{N+1+c}2}\left(\frac{y_2}{\sqrt t}\right)^{-c}\exp\left(-c\frac{|z_1-z_2|^2}{ t}\right)\\[1ex]&\simeq \frac{C}{V(z_2,\sqrt t)}\exp\left(-c\frac{|z_1-z_2|^2}{ t}\right)
\end{align*}
for every  $z_1$, $z_2\in\R^{N+1}_+$ such that $\frac{y_1}{\sqrt{t}}\geq 1$,  $\frac{y_2}{\sqrt{t}}\geq 1$. The required estimates then follows by Remark \ref{equivalent V up low}.
\qed

\smallskip

We now come to the main part of the section: we prove  the lower estimate \eqref{low kernel measure} near the boundary i.e. for   $z_1, z_2\in\R^{N+1}_+$  when at least one of $\frac{z_1}{\sqrt t}, \frac{z_2}{\sqrt t}$  lies in $\R^N\times ]0,1]$. The main point consists in Proposition \ref{log-kernel-est} where we prove the so-called  ``G-bound'', i.e. a lower bound of the integral of the logarithm of the heat kernel with respect to a  weighted Gaussian measure. We need some preparation.

\smallskip 

We fix  $\alpha>0$ and we consider the weighted  Gaussian measure $\nu :=y^ce^{-a|z|^2}dz$ on $\R^{N+1}_+$ which, from the assumptions  $\alpha, c+1>0$, is  finite on $\R^{N+1}_+$. By standard computation one has
\begin{align}\label{gauss-norm2}
	\nu\left(\R^{N+1}_+\right)=\int_{\R^{N+1}_+} y^c e^{-\alpha |z|^2}\,dz= a^{-\frac{N+1+c}{2}}\pi^{\frac N2}\Gamma\left(\frac{c+1}2\right)
\end{align} 
where $\Gamma$ is the Gamma function (see \cite[Section 3]{Negro-Spina-Poincare}).  We consider the space $L^2_\nu\left(\R^{N+1}_+\right):=L^2(\R^{N+1}_+, y^ce^{-a|z|^2}dz)$ and we define also the Sobolev space 
$$H^{1}_{\nu}(\R^{N+1}_+):=\{u \in L^2_{\nu}(\R^{N+1}_+) : \nabla u \in L^2_{\nu}(\R^{N+1}_+)\}$$
The following Poincar\'{e}-type inequality in $H^1_\nu(\R^{N+1}_+)$ is essential for the proof of Proposition \ref{log-kernel-est}.
\begin{teo}[Poincar\'{e} inequality] \label{poincare}
	Let $\alpha>0$,  $c+1>0$. Then,  for some positive constant $C>0$, one has
	\begin{align*}
		\left\|u-\overline u\right\|_{L^2_\nu(\R^{N+1}_+)}\leq  C \|\nabla u\|_{L^2_\nu (\R^{N+1}_+)},\qquad \forall u\in H^1_\nu(\R^{N+1}_+),
	\end{align*}
	where $\ds \overline u=\frac 1{\nu(\R^{N+1}_+)}\int_{\R^{N+1}_+} u\,d\nu(z)$.
\end{teo}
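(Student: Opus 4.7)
The strategy is to exploit the product structure $\nu=\mu_x\otimes\mu_y$, where $\mu_x=e^{-\alpha|x|^2}dx$ on $\R^N$ and $\mu_y=y^c e^{-\alpha y^2}dy$ on $\R_+$, and reduce the problem to a one-dimensional Poincaré inequality on each factor, recombined by tensorization. For the Gaussian marginal $\mu_x$ the classical Gaussian Poincaré applies directly with constant $1/(2\alpha)$, so no work is needed there.

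The main step is the one-dimensional inequality
\[
\int_0^\infty\Bigl|f-\textstyle\frac{1}{\mu_y(\R_+)}\int f\,d\mu_y\Bigr|^2 d\mu_y\;\leq\;C_y\int_0^\infty|f'|^2\,d\mu_y,
\]
which I would prove via the Muckenhoupt--Hardy characterization of one-dimensional weighted Poincaré inequalities: the relevant supremum of products of tail integrals of $\rho(y)=y^c e^{-\alpha y^2}$ against tail integrals of $1/\rho$ on intervals adjacent to $0$ and $\infty$ is finite, since $c+1>0$ makes $\mu_y$ a finite measure with the right integrability at the origin, and the Gaussian factor supplies rapid decay at infinity.

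Given both 1D inequalities, tensorization proceeds by splitting $u=(u-U)+(U-\overline u)$, where $U(x):=\mu_y(\R_+)^{-1}\int_0^\infty u(x,y)\,d\mu_y(y)$ is the slice average. The two summands are $\mu_y$-orthogonal in the $y$-variable, hence
\[
\|u-\overline u\|_{L^2_\nu}^2=\int_{\R^N}\!\Bigl(\int_0^\infty |u-U|^2 d\mu_y\Bigr)d\mu_x+\mu_y(\R_+)\,\|U-\overline u\|_{L^2_{\mu_x}}^2.
\]
The first summand is controlled by $C_y\int|D_y u|^2 d\nu$ by applying the 1D $y$-Poincaré with $x$ frozen; the second by $(2\alpha)^{-1}\int|\nabla_x u|^2 d\nu$ by applying the Gaussian Poincaré to $U$ together with the Jensen bound $|\nabla_x U(x)|^2\leq \mu_y(\R_+)^{-1}\int|\nabla_x u(x,y)|^2\,d\mu_y(y)$. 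The claim follows with $C=\max(C_y,1/(2\alpha))$.

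The principal obstacle is the 1D Poincaré on $(\R_+,\mu_y)$: when $c\in(-1,0)$ the density $y^c$ is singular at the origin, so the Bakry--Émery criterion (which would require $-\log\rho$ uniformly convex) fails near $y=0$ and one must fall back on the Muckenhoupt analysis, with an explicit treatment of the boundary behaviour. This delicate analysis is precisely the content developed in the authors' companion paper \cite{Negro-Spina-Poincare}.
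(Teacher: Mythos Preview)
The paper does not give a self-contained argument here: its proof consists solely of the reference ``See \cite[Theorem 3.7]{Negro-Spina-Poincare}'', so there is no in-paper proof to compare against line by line. Your tensorization strategy---factor $\nu=\mu_x\otimes\mu_y$, apply the classical Gaussian Poincar\'e on the $x$-factor, reduce the $y$-factor to a one-dimensional weighted inequality, and recombine via the orthogonal splitting $u-\overline u=(u-U)+(U-\overline u)$---is correct and is a standard route to product-measure Poincar\'e inequalities. The orthogonality you claim and the Jensen bound on $\nabla_x U$ are both valid, and they close the argument once the one-dimensional $y$-inequality is available.

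As you correctly isolate, the only substantive step is the Poincar\'e inequality for $\mu_y=y^c e^{-\alpha y^2}\,dy$ on $\R_+$, particularly when $c\in(-1,0)$ and Bakry--\'Emery does not apply directly. The Muckenhoupt--Hardy criterion you propose does settle it; an alternative is the substitution $s=y^2$, which converts the question into the classical Laguerre (Gamma) Poincar\'e inequality with parameter $(c+1)/2>0$, whose spectral gap is well known. Since you yourself defer the detailed one-dimensional analysis to the companion paper, your proposal and the paper's ``proof'' are in effect the same: both point to \cite{Negro-Spina-Poincare} for the hard step, with your sketch additionally spelling out the (correct) tensorization that reduces the full inequality to that one-dimensional core.
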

\begin{proof}
	See \cite[Theorem 3.7]{Negro-Spina-Poincare}.
\end{proof}
From now on, without any loss of generality,  we fix $\alpha>0$ in \eqref{gauss-norm2}  such that 
\begin{equation}\label{gauss-norm}
	\int_{\R^{N+1}_+} y^c\,e^{-\alpha|z|^2} dz=1.
\end{equation}
We define, for fixed $\theta \in [\frac{1}{2},1)$ and $z_2\in\R^{N+1}_+$, the function 
\begin{align*}
	u(t,z)&:=\theta p_{\mathcal L}(t,z,z_2)+1-\theta,\qquad \forall t>0,\; z\in\R^{N+1}_+.
\end{align*}
We collect in the next lemma the properties we need about $u$.
\begin{lem}\label{Prop u}
	Let $u$ be  the function above defined. Then the following properties hold.
	\begin{itemize}
		\item[(i)] $u(t,z)\geq 1-\theta>0$ for any $t>0$, $z\in\R^{N+1}_+$.
		\item[(ii)] For fixed $t>0$ one has $\frac{e^{-\alpha|z|^2}}u\in H^1_c(\R^{N+1}_+)$ and $\log u\in H^1_\nu(\R^{N+1}_+)$.
		\item[(iii)] If $z_2\in\R^N\times ]0,1]$ the one has  $$\sup_{t\in \left[\frac{1}{2},1\right]}u(t,z)\leq K,\qquad \forall z\in\R^{N+1}_+$$
		for some positive constant $K>0$, independent on $\theta$ and on $z_2$. 
		\item[(iv)]  One has    $$\lim_{R\to+\infty }\int_{Q(0,R)}u(t,z)y^c\,dz\geq \frac 1 2$$
		  uniformly  on $\theta$,  $t\in \left[\frac{1}{2},1\right]$ and  $z
		_2\in Q(0,1)=B(0,1)\times[0,1[$ .
	\end{itemize}
\end{lem}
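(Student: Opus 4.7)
Part (i) is immediate from $p_{\mathcal L} \geq 0$ (positivity from Theorem \ref{true.kernel}) and $\theta < 1$, giving $u \geq 1-\theta > 0$. For part (iii), apply the upper kernel estimate of Theorem \ref{true.kernel} in the equivalent form of Remark \ref{oss equiv estimate} with $i = 2$: for $t \in [1/2, 1]$ and $y_2 \in (0, 1]$, the weight $y_2^{-c}(1 \wedge y_2/\sqrt{t})^c$ equals $t^{-c/2}$ when $y_2 \leq \sqrt{t}$ and $y_2^{-c}$ otherwise; in the latter case $y_2 > \sqrt{t} \geq 1/\sqrt{2}$, so both quantities are bounded by a universal constant independent of $t, z_2$. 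Together with $\exp(-|z-z_2|^2/(kt)) \leq 1$, this gives $p_{\mathcal L}(t,z,z_2) \leq K_0$ uniformly in $z \in \R^{N+1}_+$, $t \in [1/2, 1]$ and $z_2 \in \R^N \times \,]0, 1]$; hence $u \leq K := \theta K_0 + 1$.

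For (ii), fix $t > 0$. The same argument as in (iii), with constants now allowed to depend on $t$ and on $z_2$, shows that $p_{\mathcal L}(t,\cdot, z_2)$ is bounded on $\R^{N+1}_+$, so $u \in [1 - \theta, M_t]$. Thus $\log u$ is bounded and lies trivially in $L^2_\nu$, since $\nu$ is a finite measure by \eqref{gauss-norm}. The gradient $\nabla \log u = \theta\, \nabla p_{\mathcal L} / u$ is dominated by $|\nabla p_{\mathcal L}|/(1-\theta)$, which by Theorem \ref{space derivative estimates} has Gaussian decay in $z$ and therefore lies in $L^2_\nu$. For $e^{-\alpha|z|^2}/u \in H^1_c$, the function itself is dominated by $e^{-\alpha|z|^2}/(1-\theta) \in L^2_c$; differentiating gives $-2\alpha z\, e^{-\alpha|z|^2}/u - e^{-\alpha|z|^2} \nabla u / u^2$, whose two summands are bounded by $(|z|+1) e^{-\alpha|z|^2}$ times $L^\infty$ quantities, hence in $L^2_c$.

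For (iv), dropping the non-negative term gives $\int_{Q(0,R)} u\, y^c\, dz \geq \theta \int_{Q(0,R)} p_{\mathcal L}(t,z,z_2)\, y^c\, dz$, reducing the claim to proving $\int_{Q(0,R)} p_{\mathcal L}\, y^c\, dz \to 1$ uniformly in $(t, z_2) \in [1/2, 1] \times Q(0, 1)$. By the conservation property (Proposition \ref{conservation}) it suffices to show the uniform tail estimate $\int_{Q(0,R)^c} p_{\mathcal L}\, y^c\, dz \to 0$. Using the upper kernel estimate together with the uniform bound on $y_2^{-c}(1 \wedge y_2/\sqrt t)^c$ from the argument in (iii), the integrand is dominated uniformly by a fixed Gaussian $C\exp(-|z - z_2|^2 / k')$; its integral outside $Q(0, R)$ tends to zero as $R \to \infty$ uniformly for $z_2$ ranging in the compact set $Q(0, 1)$, thanks to the integrability $\int_0^\infty y^c e^{-cy^2}\,dy<\infty$ granted by $c+1>0$. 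Since $\theta \geq 1/2$, the limit is at least $1/2$.

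\textbf{Main obstacle.} The most delicate technical point is the uniform control of the weight $y_2^{-c}(1 \wedge y_2/\sqrt t)^c$ in both (iii) and (iv), especially when $c \in (-1, 0)$: its behaviour splits according to whether $y_2 \leq \sqrt t$, and one must verify that both regimes yield bounds independent of $z_2 \in Q(0, 1)$, $t \in [1/2, 1]$, and $\theta \in [1/2, 1)$. Once this uniform control is in place, everything else reduces to routine applications of the kernel and gradient estimates combined with the conservation property and the finiteness of $\nu$.
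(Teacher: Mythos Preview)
Your proposal is correct and follows essentially the same route as the paper: positivity for (i), the upper and gradient kernel estimates for (ii) and (iii), and the conservation property plus a uniform Gaussian tail bound for (iv). The only difference is cosmetic---in (iv) the paper applies the upper estimate with the weight placed on the integration variable $y$ (i.e.\ $i=1$ in Remark \ref{oss equiv estimate}) rather than on $y_2$, which makes the $z_2$-independent domination immediate; also note the slip ``$e^{-cy^2}$'' in your integrability remark should read $e^{-\kappa y^2}$ for a positive constant $\kappa$, since $c$ may be negative.
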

\begin{proof}
	(i) follows from the positivity of $p_{\mathcal L}$.  (ii) follows from standard calculation, the previous property, the upper  estimates \eqref{upper estimates ver2} and of the gradient estimates of Theorem \ref{space derivative estimates}. To prove (iii) we use again  \eqref{upper estimates ver2} to write
	\begin{align*}
		p_{{\mathcal  L}}(t,z,z_2)
		\leq C t^{-\frac{N+1}{2}}  y_2^{-c} \left(1\wedge \frac{y_2}{\sqrt t}\right)^{c}\,\exp\left(-\dfrac{|z-z_2|^2}{kt}\right)
	\end{align*}
	which from the assumption $y_2\leq 1$  implies 
	\begin{align*}
		\sup_{t\in \left[\frac{1}{2},1\right]}u(t,z)\leq K \,\exp\left(-\dfrac{|z-z_2|^2}{k'}\right)\leq K
	\end{align*}
	for some positive constants $K,k'>0$.
	
	 Let us finally prove (iv). Since by the  definition $u(t,z)\geq \frac 1 2 \,p_{\mathcal L}(t,z,z_2)$ then using the conservation property of Proposition \ref{conservation} we get 
	\begin{align}\label{Prop u eq 1}
		\int_{Q(0,R)}  u(t,z) 
		\,y^{c} dz&\geq\frac 1 2\int_{Q(0,R)}  p_{\mathcal L}(t,z,z_2)\,y^{c} dz= \frac 1 2\left(1-\int_{Q(0,R)^c}  p_{\mathcal L}(t,z,z_2) \,y^{c} dz\right).
	\end{align}
	We then observe that,  by the  upper kernel estimates \eqref{upper estimates ver2} and the assumption $\frac 1 2 \leq t\leq 1$,  $|x_2|<1$, $y_2< 1$, one has,  for some positive constants $C,k>0$ which may be different in each occurrence,
	\begin{align*}
		p_{{\mathcal  L}}(t,z,z_2)
		&\leq C t^{-\frac{N+1}{2}}  y^{-c} \left(1\wedge \frac{y}{\sqrt t}\right)^{c}\,\exp\left(-\dfrac{|z-z_2|^2}{kt}\right)\\[1ex]
		&\leq C y^{-c} \left(1\wedge y\right)^{c}\,\exp\left(-\dfrac{|z|^2}{k}\right).
	\end{align*}
The last inequalities  then imply, by Lebesgue's Theorem,
\begin{align*}
	\int_{Q(0,R)^c}  p_{\mathcal L}(t,z,z_2) \,y^{c} dz\leq  	C \int_{Q(0,R)^c}  \left(1\wedge y\right)^{c}\,\exp\left(-\dfrac{|z|^2}{k}\right) dz\underset{R\to+\infty}{\longrightarrow} 0
\end{align*}
which using \eqref{Prop u eq 1} proves (iv).
\end{proof}
We can define now the Nash's $G$-function.
\begin{defi}\label{defi G-function}
	For fixed $\theta \in [\frac{1}{2},1)$ and $z_2\in\R^{N+1}_+$, we define 
	\begin{align*}
		G(t)&:=\int_{\R^{N+1}_+} \log u(t,z)e^{-\alpha|z|^2}y^c\, dz,\qquad \forall t>0.
	\end{align*}
	We remark that, in virtue of Theorem \ref{true.kernel} and of \eqref{upper estimates ver2},  $G(t)$ is well defined and finite.
\end{defi}
We will estimate $G(1)$ from below trough a differential inequality satisfied by $G(t)$ which we prove in the following lemma.
\begin{lem}\label{diff ineq G}
	The following properties hold.
	\begin{itemize}
		\item[(i)] $G(t)\leq 0$ for any t$>0$. 
		\item[(ii)] For some positive constants $A, B>0$, independent of $t$, $z_2$ and $\theta$, one has 
		\begin{equation} \label{increasing}
			G'(t)\geq -A+B\int_{\R^{N+1}_+}  |\nabla \log u|^2 e^{-\alpha|z|^2}\,y^{c} dz.
		\end{equation}
		In particular  $G(t)+At$ is increasing.
	\end{itemize}
\end{lem}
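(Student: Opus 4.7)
The plan is to handle $(i)$ via Jensen's inequality and $(ii)$ by differentiating under the integral sign and exploiting the form representation of $\mathcal{L}$ from Section \ref{L2}.

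For $(i)$, the normalization \eqref{gauss-norm} makes $d\nu := y^c e^{-\alpha|z|^2}dz$ a probability measure on $\R^{N+1}_+$, so by concavity of $\log$
\[
G(t) = \int \log u\, d\nu \leq \log \int u\, d\nu.
\]
By the definition of $u$ and the adjoint relation \eqref{adjoint-ker} one identifies $\int p_\mathcal{L}(t,z,z_2)\, e^{-\alpha|z|^2}y^c\,dz = (e^{t\mathcal{L}^\ast}g)(z_2)$ with $g(z) := e^{-\alpha|z|^2}\leq 1$. Since $e^{t\mathcal{L}^\ast}$ is $L^\infty$-contractive by Proposition \ref{generation L2}, this quantity is $\leq 1$, whence $\int u\, d\nu \leq \theta + (1-\theta) = 1$ and $(i)$ follows.

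For $(ii)$, I will differentiate under the integral sign — justified by the time-derivative estimate of Proposition \ref{Time derivative estimates} together with the pointwise lower bound $u \geq 1-\theta > 0$ from Lemma \ref{Prop u}(i) — to obtain, using $\partial_t u = \theta\, \mathcal{L} p_\mathcal{L} = \mathcal{L}u$,
\[
G'(t) = \int \frac{\mathcal{L}u}{u}\, e^{-\alpha|z|^2} y^c\, dz.
\]
Since $p_\mathcal{L}(t,\cdot,z_2)\in D(\mathcal{L})$ for every $t>0$ by analyticity of $e^{t\mathcal{L}}$, and the test function $\phi := e^{-\alpha|z|^2}/u$ lies in $H^1_c = D(\mathfrak{a})$ by Lemma \ref{Prop u}(ii), the form identity \eqref{BesselN} gives $G'(t) = -\mathfrak{a}(u,\phi)$. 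A direct computation of $\nabla\phi$, substituted into the expression for $\mathfrak{a}$, combined with the identity $\nabla u/u = \nabla\log u$, yields (setting $w:=\log u$)
\[
G'(t) = \int\!\Big[|\nabla w|^2 + 2(a\cdot\nabla_x w)D_y w + 2\alpha\, z\cdot\nabla w + 4\alpha(a\cdot x) D_y w\Big] e^{-\alpha|z|^2} y^c\, dz.
\]

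The main obstacle is the lower bound on this integrand. The ellipticity condition $|a|<1$ yields $|\nabla w|^2 + 2(a\cdot\nabla_x w) D_y w \geq (1-|a|)|\nabla w|^2$. The linear-in-$\nabla w$ remainder is absorbed via Young's inequality,
\[
|2\alpha\, z\cdot\nabla w + 4\alpha(a\cdot x) D_y w| \leq \varepsilon |\nabla w|^2 + C_\varepsilon |z|^2,
\]
with $\varepsilon := (1-|a|)/2$. The leftover Gaussian moment $\int|z|^2 e^{-\alpha|z|^2} y^c\, dz$ is finite by \eqref{gauss-norm2}, producing \eqref{increasing} with $B = (1-|a|)/2$ and $A$ a constant depending only on $\alpha, a, c$; crucially both are independent of $t, \theta, z_2$. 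Monotonicity of $G(t)+At$ is immediate from $G'(t)\geq -A$.
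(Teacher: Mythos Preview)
Your proof is correct and follows essentially the same route as the paper's: Jensen for (i), then for (ii) the form identity $G'(t)=-\mathfrak{a}(u,e^{-\alpha|z|^2}/u)$, expansion, and Young's inequality exploiting $|a|<1$. The only cosmetic difference is that for (i) you invoke the $L^\infty$-contractivity of $e^{t\mathcal L^\ast}$ applied to $g=e^{-\alpha|z|^2}$, whereas the paper drops the Gaussian factor first and uses the conservation property $\int p_{\mathcal L}(t,z,z_2)\,y^c\,dz=1$ (i.e.\ conservation for $\mathcal L^\ast$) directly; both yield $\int u\,d\nu\leq 1$.
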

\begin{proof}
	To prove (i) we recall that, by Proposition \ref{conservation} one has  \begin{align*}
		\int_{\R^{N+1}_+}  p_{\mathcal L}(t,z,z_2)y^c\, dz=1.
	\end{align*}
	Recalling \eqref{gauss-norm} and applying  Jensen's inequality   it follows that
	\begin{align*}
		G(t)&=\int_{\R^{N+1}_+} \log u(t,z)e^{-\alpha|z|^2}y^c\, dz\leq 	\log \left(\int_{\R^{N+1}_+}  u(t,z)e^{-\alpha|z|^2}y^c\, dz_2\right)\\&=\log \left(\int_{\R^{N+1}_+}  \theta p_{\mathcal L}(t,z,z_2)e^{-\alpha|z|^2}y^c\, dz+\int_{\R^{N+1}_+} (1-\theta) e^{-\alpha|z|^2}y^c\, dz\right)\\&\leq \log\left(\theta\int_{\R^{N+1}_+}   p_{\mathcal L}(t,z,z_2) y^c\, dz+(1-\theta)\right)=\log 1=0.
	\end{align*}
	To prove (ii) we observe preliminarily that 
	\begin{align*}
		\partial_t u(t,z)=\theta \mathcal L p_{\mathcal L}(t,z,z_2)=\mathcal L u(t,z).
	\end{align*} 
	Then  differentiating under the integral sign and using  Lemma \ref{Prop u}  we   obtain
	\begin{align*}
		G'(t)&=\int_{\R^{N+1}_+} \partial_t\log u(t,z)e^{-\alpha|z|^2}y^c\, dz\\[1ex]
		&=\int_{\R^{N+1}_+} \mathcal Lu(t,z)\frac{e^{-\alpha|z|^2}}{u}y^c\, dz=-\mathfrak{a}\left( u,\frac{e^{-\alpha|z|^2}}{u}\right).
	\end{align*}
	Then recalling \eqref{form a def} and by standard calculation we get
	\begin{align*}
		G'(t)&=-\int_{\R^{N+1}_+}  \nabla u\cdot\nabla \left(\frac{e^{-\alpha|z|^2}}{u}\right)y^{c}\, dz-2\int_{\R^{N+1}_+}  D_y  u\,a\cdot\nabla_x\left(\frac{e^{-\alpha|z|^2}}{u}\right)\,y^{c}\, dz\\&
		=2\alpha\int_{\R^{N+1}_+}  \nabla \log u\cdot z\, e^{-\alpha|z|^2}\,y^{c}\, dz+\int_{\R^{N+1}_+}  |\nabla \log u|^2e^{-\alpha|z|^2}\,y^{c}\, dz\\[1ex]
		&+4\alpha\int_{\R^{N+1}_+}  D_y\log u\,  a\cdot x e^{-\alpha|z|^2}\,y^{c}\, dz +2\int_{\R^{N+1}_+} a\cdot \nabla_x \log u\,D_y\log u\, e^{-\alpha|z|^2}\,y^{c}\, dz.
	\end{align*}
	
	By Young's inequality we get for any  $\eps>0$ and some positive constant $C>0$
	\begin{align*}
		\left|2\alpha\int_{\R^{N+1}_+}  \nabla \log u\cdot z\, e^{-\alpha|z|^2}\,y^{c}\, dz\right|&\leq \eps\int_{\R^{N+1}_+}  |\nabla \log u|^2\, e^{-\alpha|z|^2}\,y^{c}\, dz\\&+C \int_{\R^{N+1}_+}  |z|^2\, e^{-\alpha|z|^2}\,y^{c}\, dz,
	\end{align*}
	\begin{align*}
		\left|4\alpha\int_{\R^{N+1}_+} D_y\log u\,  a\cdot x e^{-\alpha|z|^2}\,y^{c}\, dz\right|&\leq \eps\int_{\R^{N+1}_+}  |D_y \log u|^2\, e^{-\alpha|z|^2}\,y^{c}\, dz\\&+C \int_{\R^{N+1}_+}  |x|^2\, e^{-\alpha|z|^2}\,y^{c}\, dz,
	\end{align*}
	\begin{align*}
		\left|2\int_{\R^{N+1}_+} a\cdot \nabla_x \log u\,D_y\log u\, e^{-\alpha|z|^2}\,y^{c}\, dz\right|&\leq |a|\int_{\R^{N+1}_+}  |\nabla_x \log u|^2\, e^{-\alpha|z|^2}\,y^{c}\, dz\\&+|a|\int_{\R^{N+1}_+}  |D_y \log u|^2\, e^{-\alpha|z|^2}\,y^{c}\, dz.
	\end{align*}
	Putting together the previous inequality and choosing $\epsilon$ sufficiently small we get
	\begin{equation*}
		G'(t)\geq -A+B\int_{\R^{N+1}_+}  |\nabla \log u|^2 e^{-\alpha|z|^2}\,y^{c} dz,
	\end{equation*}
	some positive constants $A, B>0$, independent of $t$, $z_2$ and $\theta$. This proves the required claim.
\end{proof}
\begin{os}\label{os G-ast}
	We can also consider, similarly, 
	\begin{align*}
		u^\ast(t,z)&:=\theta p_{\mathcal L^\ast }(t,z,z_2)+1-\theta,\qquad 
		G^\ast (t):=\int_{\R^{N+1}_+} \log u^\ast (t,z)e^{-\alpha|z|^2}y^c\, dz.
	\end{align*}
	Then, adapting almost identically all the  proofs to the operator $\mathcal L^\ast$,   the same results of Lemmas \ref{Prop u} and \ref{diff ineq G} keep to hold for $u^\ast$ and $G^\ast$.
\end{os}

\begin{prop}[G-bound] \label{log-kernel-est}
	There exists a positive constant $C>0$ such that   one has
	\begin{align*}
		\int_{\R^{N+1}_+} \log p_{\mathcal L}(1,z,z_2)e^{-\alpha|z_2|^2}y_2^c\, dz_2\geq -C,\quad 	\int_{\R^{N+1}_+} \log p_{\mathcal L^*}(1,z,z_2)e^{-\alpha|z_2|^2}y_2^c\, dz_2\geq -C.
	\end{align*}
	for any $z\in B(0,1)\times]0,1[$.
\end{prop}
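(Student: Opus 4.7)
The strategy follows Nash's $G$-function method: we aim to prove $G(1) \geq -C$ (and separately the analog $G^*(1) \geq -C$) uniformly in $z_2 \in Q(0,1)$ and $\theta \in [\tfrac{1}{2}, 1)$, then let $\theta \to 1^-$ and, via the adjoint relation $p_{\mathcal L}(1,z_1,z_2) = p_{\mathcal L^*}(1,z_2,z_1)$ from \eqref{adjoint-ker}, swap the roles of $\mathcal L$ and $\mathcal L^*$ to obtain the two displayed inequalities.

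Combining Lemma \ref{diff ineq G}(ii) with the Poincar\'e inequality of Theorem \ref{poincare} applied to $\log u \in H^1_\nu$ (valid by Lemma \ref{Prop u}(ii)) yields the bootstrap inequality
\begin{equation*}
    G'(t) \;\geq\; -A + \frac{B}{C_P} \int_{\R^{N+1}_+} \bigl(\log u - G(t)\bigr)^2 \, d\nu .
\end{equation*}
The heart of the argument is to bound the right-hand integral below by $\eta\, G(t)^2$ when $|G(t)|$ is large. For this, fix $R$ so that Lemma \ref{Prop u}(iv) yields $\int_{Q(0,R)} u\, y^c dz \geq \tfrac{1}{4}$ uniformly in $t \in [\tfrac{1}{2},1]$, $\theta$, and $z_2 \in Q(0,1)$. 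Combined with the uniform ceiling $u \leq K$ from (iii), a Chebyshev-type argument produces a set $E \subset Q(0,R)$ of Lebesgue measure at least $c/K$ on which $u$ stays above a positive constant $c_1$. Hence $\nu(E) \geq \eta_0 > 0$ independently of the parameters, and $\log u \geq -L$ on $E$ for $L := |\log c_1|$. Whenever $|G(t)| \geq 2L$, on $E$ one has $\log u - G(t) \geq |G(t)|/2$, so $\int (\log u - G(t))^2 d\nu \geq (\eta_0/4)\, G(t)^2$.

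Inserting this back yields $G'(t) \geq \eta_*\, G(t)^2$ whenever $|G(t)| \geq L^{**}$, for suitable universal constants $L^{**}$ and $\eta_*>0$ chosen so as to absorb the additive constant $-A$. Equivalently, writing $m := -G \geq 0$, one has $(1/m)'(t) \geq \eta_*$ wherever $m(t) \geq L^{**}$. A case analysis on $[\tfrac{1}{2},1]$ then closes the argument: if $m \geq L^{**}$ throughout the interval, integration gives $m(1) \leq 2/\eta_*$; otherwise there exists $t^* \in [\tfrac{1}{2},1]$ with $m(t^*) \leq L^{**}$, and the monotonicity of $G(t) + At$ from Lemma \ref{diff ineq G}(ii) implies $m(s) \leq m(t^*) + A(s - t^*)$ for $s \geq t^*$, so $m(1) \leq L^{**} + A/2$. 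In either case $G(1) \geq -C_0$ with $C_0$ independent of $\theta$ and $z_2$.

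Finally, since $\log K - \log u \geq 0$ uniformly in $\theta$, Fatou's lemma lets one transfer the bound on $G(1)$ to the limit $\theta \to 1^-$, obtaining $\int \log p_{\mathcal L}(1, z', z_2)\, d\nu(z') \geq -C_0$ for every $z_2 \in Q(0,1)$. The adjoint relation \eqref{adjoint-ker} then rewrites this as the proposition's bound for $p_{\mathcal L^*}$ (with $z \in Q(0,1) \supset B(0,1)\times ]0,1[$), and repeating the whole argument with $G^*$ in place of $G$ (cf.\ Remark \ref{os G-ast}) yields the bound for $p_{\mathcal L}$. The principal obstacle is the quadratic lower bound $\int (\log u - G(t))^2 d\nu \gtrsim G(t)^2$: it requires the combination of the uniform ceiling from (iii) with the concentration of mass from (iv) to locate a definite portion of $\nu$-mass on which $\log u$ is not too negative, without which the Poincar\'e inequality alone only controls the \emph{variance} of $\log u$ and cannot overcome the additive constant $-A$ in the bootstrap ODE.
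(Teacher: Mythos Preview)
Your proof is correct and follows the same Nash $G$-function strategy as the paper, but the key technical step --- passing from the Poincar\'e variance bound to the Riccati inequality $G'(t) \geq \eta_* G(t)^2$ --- is handled differently. The paper writes $(\log u - G)^2 = \tfrac{(\log u - G)^2}{u}\cdot u$, observes that $u \mapsto (\log u - G)^2/u$ is decreasing on $[e^{2+G},\infty)$, and uses $u \le K$ to obtain $\int_{\{u\ge e^{2+G}\}}(\log u - G)^2\,d\nu \ge \tfrac{(\log K - G)^2}{K}\int_{\{u\ge e^{2+G}\}} u\,d\nu$; the last integral is then bounded below via conservation (Lemma~\ref{Prop u}(iv)). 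Your Chebyshev route is more direct: from $\int_{Q(0,R)} u\,d\mu \ge 1/4$ and $u\le K$ you extract a set $E$ of definite $\mu$-measure (not Lebesgue measure --- that is a slip, but harmless since $d\nu = e^{-\alpha|z|^2}\,d\mu$ and $E\subset Q(0,R)$ gives $\nu(E)\ge e^{-2\alpha R^2}\mu(E)$ at once) on which $\log u$ is bounded below, and then estimate the variance integral on $E$ directly. Both arguments yield the same quadratic differential inequality; yours is arguably simpler and more transparent, while the paper's avoids the auxiliary set $E$ altogether. Your treatment of the limit $\theta\to 1^-$ via Fatou applied to $\log K - \log u \ge 0$ is also more explicit than the paper's, which simply writes ``taking the limit''.
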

\begin{proof} Let $\theta \in [\frac{1}{2},1)$, $z_2\in B(0,1)\times]0,1[$ and let us consider, recalling Definition \ref{defi G-function} , the function 
\begin{align*}
	G(t)&=\int_{\R^{N+1}_+} \log u(t,z)e^{-\alpha|z|^2}y^c\, dz,\qquad \forall t\in \left[\frac 12,1\right].
\end{align*} 
Our aim is to prove that 
\begin{align}\label{gbound eq1}
	G(1)\geq -C
\end{align} 
for some positive constant $C>0$ independent of $\theta$ and of $z_2$. Indeed once proved \eqref{gbound eq1}, then  taking the limit for $\theta\to 1$ and recalling that $p_{\mathcal L}(1,z,z_2)=p_{\mathcal L^*}(1,z_2,z)$, one obtains
\begin{align*}
	\int_{\R^{N+1}_+} \log p_{\mathcal L}(t,z,z_2)e^{-\alpha|z|^2}y^c\, dz=\int_{\R^{N+1}_+} \log p_{\mathcal L^*}(1,z_2,z)e^{-\alpha|z|^2}y^c\, dz\geq -C
\end{align*}
which is the second required inequality for $p_{\mathcal L^*}$. The analogue inequality for  $p_{\mathcal L}$  follows in the same manner after replacing the function $G$ with $G^\ast$ and using Remark \ref{os G-ast}.

Let us then prove \eqref{gbound eq1}.  Recalling that, by (ii) of Lemma \ref{Prop u},   $\log u\in H^1_\nu(\R^{N+1}_+)$,  we can combine  \eqref{increasing} and  the weighted Poincar\'e inequality of Theorem \ref{poincare} applied to $\log u$  to obtain  
$$G'(t)\geq -A+B\int_{\R^{N+1}_+}  (\log u-G(t))^2 e^{-\alpha|z|^2}\,y^{c} dz$$
for some positive constants $A, B>0$, independent of $t$, $z_2$ and $\theta$. In particular the function   $G(t)+At$ is increasing.

We make, preliminarily, the following useful observation: the previous properties allow, without any loss of generality, to suppose the function  $G$ very negative, i.e. satisfying
\begin{align}\label{gbound eq2}
	G(t)<-D,\qquad \forall t\in \left[\frac 12,1\right],
\end{align}
for some  arbitrary large constant $D>0$ to be specified later. Indeed if \eqref{gbound eq2} fails  for some $t_0\in \left[\frac 12,1\right]$ then  we  have, from the monotonicity of $G(t)+At$, 
$$G(1)+A\geq G(t_0)+At_0\geq -D+\frac A 2$$
which already   imply the required claim \eqref{gbound eq1}.

 Assuming   \eqref{gbound eq2}  we are going now to estimate $G(1)$ from below by proving the differential inequality \eqref{Gbound eq 4}  which we derive using Lemma \ref{diff ineq G} and the  upper kernel estimates of Theorem \ref{true.kernel}.
We start by observing that by (iii) of  Lemma \ref{Prop u} one has
\begin{align}\label{gbound eq3}
\sup_{t\in \left[\frac{1}{2},1\right]}u(t,z)\leq  K,\qquad \forall  z\in\R^{N+1}_+
\end{align}
for some positive constant $K$ independent on $\theta$ and $z_2$.
Moreover the function $u\mapsto \frac{(\log u-G(t))^2}{u}$ is decreasing as a function of $u$ in $[e^{2+G(t)},+\infty[$. Therefore, choosing in \eqref{gbound eq2} $D$ sufficiently large,  we can then suppose $e^{2+G(t)}< K$  thus obtaining, since by  \eqref{gbound eq3}  $u\leq k$,
\begin{align*}  
	G'(t)&\geq -A+B\int_{\R^{N+1}_+}  (\log u-G(t))^2 e^{-\alpha|z|^2}\,y^{c} dz\\[1ex]
	&\geq -A+B\int_{\{u\geq e^{2+G(t)}\}}  \frac{(\log u-G(t))^2}{u}u\, e^{-\alpha|z|^2}\,y^{c} dz\\[1.5ex]
	&\geq -A+B\frac{(\log K-G(t))^2}{K}\int_{\{u\geq e^{2+G(t)}\}}  u(t,z) e^{-\alpha|z|^2}\,y_2^{c} dz_2.
\end{align*}
Next we prove that $\ds \int_{\{u\geq e^{2+G(t)}\}}  u(t,z) e^{-\alpha|z|^2}\,y^{c} dz$ is bounded from below. Using (iv) of  Lemma \ref{Prop u}  we get, choosing a sufficiently large $R>0$, 
$$\int_{Q(0,R)}  u(t,z)\,y^{c} dz\geq \frac 1 4$$ which implies
\begin{align*}
	\int_{u\geq e^{2+G(t)}}  u(t,z) e^{-\alpha|z|^2}\,y^{c} dz&=	\int_{\R^{N+1}_+}  u(t,z) e^{-\alpha|z|^2}\,y^{c} dz-	\int_{\{u< e^{2+G(t)}\}}  u(t,z) e^{-\alpha|z|^2}\,y^{c} dz\\[1ex]
	&\geq e^{-\alpha R^2}\int_{Q(0,R)}  u(t,z)\,y^{c} dz-
	e^{2+G(t)}\\[1ex]&
	\geq \frac 1 4 e^{-\alpha R^2}-
	e^{2+G(t)}.
\end{align*}
We then obtain
\begin{align*} 
	G'(t)&\geq -A+B\frac{(\log K-G(t))^2}{K}\left( \frac{1}{4}e^{-\alpha R^2}-e^{2+G(t)}\right).
\end{align*}

Choosing $D$ large enough in \eqref{gbound eq2} in  order to have $ \frac{1}{4}e^{-\alpha R^2}-e^{2+G(t)}>0$ and using  Young's inequality 
$$2|\log k||G(t)|<\epsilon G(t)^2+\frac{|\log k|^2}{\epsilon}$$
 with $\epsilon=\frac 1 2 $, we get for some possibly different constants $A, B>0$
\begin{align*}  
	G'(t)&\geq -A+B\, G(t)^2.
\end{align*}
Choosing again  $D$ large enough in \eqref{gbound eq2} in  order to have $G(t)^2>2\frac A B$, the previous inequality implies
\begin{align}\label{Gbound eq 4}  
	G'(t)&\geq \frac{B}{2} G(t)^2.
\end{align}
Integrating between $\frac 1 2$ and $1$ and recalling that $G(t)<0$, we finally obtain $G(1)\geq -\frac{4}{B}$ which is the required claim.
\end{proof}

We can now deduce, from  the G-bound of  Proposition \ref{log-kernel-est}, the  lower estimates \eqref{lower  diagonal} on the diagonal.

\begin{prop} \label{lower-diag}
There exists a positive constant $C>0$ such that 
	\begin{align*}
		p_{\mathcal L}(t,z,z)\geq \frac{C}{V(z,\sqrt t)},
		\qquad \forall \ t>0, \ z\in\R^{N+1}_+.
	\end{align*}
\end{prop}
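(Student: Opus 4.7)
The plan is to combine the G-bound of Proposition \ref{log-kernel-est} with the semigroup reproducing property and Jensen's inequality to obtain a uniform positive lower bound for $p_{\mathcal L}(2,z,z)$ on a fixed bounded cylinder, and then to recover the general statement by scaling, translation and the exterior estimate of Proposition \ref{kernel-exterior}.

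First, I would fix $z=(x_0,y_0)$ with $x_0\in B(0,1)\subset\R^N$ and $y_0\in\,]0,1[$. By the semigroup law and the identity $p_{\mathcal L}(1,w,z)=p_{\mathcal L^\ast}(1,z,w)$ from \eqref{adjoint-ker}, one has
\begin{align*}
p_{\mathcal L}(2,z,z)&=\int_{\R^{N+1}_+}p_{\mathcal L}(1,z,w)\,p_{\mathcal L}(1,w,z)\,y_w^c\,dw \\
&=\int_{\R^{N+1}_+}p_{\mathcal L}(1,z,w)\,p_{\mathcal L^\ast}(1,z,w)\,e^{\alpha|w|^2}\,d\nu(w),
\end{align*}
where $d\nu(w)=e^{-\alpha|w|^2}y_w^c\,dw$ is the probability measure normalized by \eqref{gauss-norm}. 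Applying Jensen's inequality to the concave function $\log$ and the probability measure $\nu$ gives
\begin{align*}
\log p_{\mathcal L}(2,z,z)\geq \int_{\R^{N+1}_+}\!\!\log p_{\mathcal L}(1,z,w)\,d\nu(w)+\int_{\R^{N+1}_+}\!\!\log p_{\mathcal L^\ast}(1,z,w)\,d\nu(w)+\alpha\!\int_{\R^{N+1}_+}\!\!|w|^2\,d\nu(w).
\end{align*}
The two logarithmic integrals are bounded below by $-C$ uniformly in $z\in B(0,1)\times\,]0,1[$ thanks to Proposition \ref{log-kernel-est} applied to both $\mathcal L$ and $\mathcal L^\ast$, while the last term is a finite positive constant. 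Hence $p_{\mathcal L}(2,z,z)\geq c_0>0$ for all such $z$.

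Next, I would remove the restriction on $x_0$ using the $x$-translation invariance \eqref{invariance x-translation}: for any $x_0\in\R^N$ and $y_0\in\,]0,1[$, $p_{\mathcal L}(2,(x_0,y_0),(x_0,y_0))=p_{\mathcal L}(2,(0,y_0),(0,y_0))\geq c_0$. Then the scaling identity \eqref{scaling} with $s=\sqrt{t/2}$ transports this into
\[
p_{\mathcal L}(t,z,z)\geq c_0\,(t/2)^{-(N+1+c)/2}\qquad\text{for every } z=(x,y) \text{ with } y\leq\sqrt{t/2}.
\]
By Lemma \ref{Misura palle} the right-hand side is comparable to $C/V(z,\sqrt t)$ in the regime $y/\sqrt t\leq 1$, which gives the claimed on-diagonal bound in the near-boundary region.

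Finally, for the far-from-boundary regime $y/\sqrt t\geq 1/\sqrt 2$, I would simply apply Proposition \ref{kernel-exterior} with $r=1/\sqrt 2$ and $z_1=z_2=z$, which yields $p_{\mathcal L}(t,z,z)\geq C/V(z,\sqrt t)$ directly. The two regimes together cover all of $\R^{N+1}_+$ and give the result. The main conceptual point — and the reason why the $L^2$-theory had to be developed in parallel for $\mathcal L$ and $\mathcal L^\ast$ — lies in the Jensen step: the diagonal value $p_{\mathcal L}(2,z,z)$ factors via the semigroup law as an integral of the \emph{product} $p_{\mathcal L}(1,z,\cdot)\,p_{\mathcal L^\ast}(1,z,\cdot)$, so the G-bound must be available for both kernels simultaneously; the rest of the argument is a routine scaling/translation reduction.
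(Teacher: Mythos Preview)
Your proposal is correct and follows essentially the same route as the paper: reproducing property plus the adjoint identity to write $p_{\mathcal L}(2,z,z)$ as a $\nu$-integral of $p_{\mathcal L}(1,z,\cdot)\,p_{\mathcal L^\ast}(1,z,\cdot)$, Jensen's inequality, the G-bound for both $\mathcal L$ and $\mathcal L^\ast$, and then translation/scaling together with Proposition \ref{kernel-exterior} for the far-from-boundary region. The only cosmetic difference is that the paper first bounds $e^{-\alpha|w|^2}\leq 1$ before applying Jensen (so the extra moment term $\alpha\int|w|^2\,d\nu$ does not appear), and it performs the scaling reduction to $t=2$ at the very beginning rather than at the end; neither change affects the argument.
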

{\sc Proof.} Recalling the scaling  properties \eqref{scaling} of the kernel 
 and of the function $V$ of Lemma \ref{Misura palle}, we have
 \begin{align*}
	p_{\mathcal{L}}(t,z,z)&=\left(\frac t2\right)^{-\frac{N+1+c}2}p_{\mathcal{L}}\left(2,\sqrt 2\frac{z}{\sqrt t},\sqrt 2\frac{z}{\sqrt t}\right),\\[1ex]
		V(z,\sqrt t)&=\left(\frac t2\right)^{-\frac{N+1+c}2}\,V\left(\sqrt 2\frac{z}{\sqrt t},\sqrt 2\right).
\end{align*}
From the previous equalities we can then assume, without any loss of generality, $t=2$. 

If $y\geq 1$, the claimed bound    follows from Proposition \ref{kernel-exterior}. Let us assume, therefore,   $y<1$. By using the   invariance for translation in the $x$-variable of the kernel \eqref{invariance x-translation} and of the function $V$ of Lemma \ref{Misura palle} we have
\begin{align*}
	p_{\mathcal{L}}(t,z+x_0,z+x_0)=	p_{\mathcal{L}}(t,z,z),\qquad 	V(z+x_0,\sqrt t)&=	V(z,\sqrt t)
\end{align*}
where for any $z=(x,y)\in\R^{N+1}_+$ and any $x_0\in\R^{N}$, we wrote,  with a little abuse of notation,  $z+x_0=\left(x+x_0,y\right)$ to denote the translation in the $x_0$-direction. Choosing then  a suitable $x_0\in\R^N$,  we may also assume  $|x|< 1$. 
Let $\alpha>0$ satisfying (\ref{gauss-norm}); then by using  the reproducing property of the kernel we get
\begin{align*}
	p_{\mathcal L}(2,z,z)&=\int_{\R^{N+1}_+}p_{\mathcal L}(1,z,z_2) p_{\mathcal L}(1,z_2,z)y_2^c\, dz_2\\[1ex]
	&=\int_{\R^{N+1}_+}p_{\mathcal L}(1,z,z_2)p^*_{\mathcal L}(1,z,z_2)y_2^c\, dz_2\\[1ex]&\geq \int_{\R^{N+1}_+}p_{\mathcal L}(1,z,z_2)p^*_{\mathcal L}(1,z,z_2)e^{-\alpha|z_2|^2}y_2^c\, dz_2.
\end{align*}
Applying  then Jensen inequality  with respect to the unitary measure $\nu=e^{-\alpha|z_2|^2}y_2^cdz_2$ and using Proposition \ref{log-kernel-est},  it follows that, for some positive constant $C>0$, one has 
\begin{align*}
	 \log(p_{\mathcal L}(2,z,z))\geq& \int_{\R^{N+1}_+}\log\Big(p_{\mathcal L}(1,z,z_2)p^*_{\mathcal L}(1,z,z_2)\Big)e^{-\alpha|z_2|^2}y_2^c\, dz_2\\[1ex]
	 =&\int_{\R^{N+1}_+}\log p_{\mathcal L}(1,z,z_2)e^{-\alpha|z_2|^2}y_2^c\, dz_2\\[1ex]
	 &\hspace{10ex}+\int_{\R^{N+1}_+}\log p^*_{\mathcal L}(1,z,z_2)e^{-\alpha|z_2|^2}y_2^c\, dz_2\geq -C
\end{align*}
which implies
\begin{align*}
	p_{\mathcal L}(2,z,z)\geq  e^{-C}.
\end{align*}
Recalling  Lemma \ref{Misura palle},  since we are assuming  $y<1$, we have 
$$V(z,\sqrt 2)\simeq V(z,1)\simeq y^{c}\left(y\wedge 1\right)^{-c}=1;$$
 the above inequality  is then equivalent to
\begin{align*}
	p_{\mathcal L}(2,z,z)\geq   \frac{C}{V(z,\sqrt 2)}.
\end{align*}
which proves the required claim. 
\qed

We may now use  the gradient estimates  proved in Section \ref{section gradient estimate}, to extend the lower bounds of Proposition \ref{lower-diag} ``near the diagonal'' i.e. in the range  $|z_1-z_2|<c\sqrt t$, for some $c>0$.
\begin{prop} \label{lower-small-ball}
There exist some positive constants $C,c>0$ such that 
	\begin{align*}
		p_{\mathcal L}(t,z_1,z_2)\geq \frac{C}{V(z_2,\sqrt t)},
		\qquad \forall \ t>0, \ z_1, z_2\in\R^{N+1}_+\quad \text{ s.t.} \quad |z_1-z_2|\leq c\sqrt{t}.
	\end{align*}
\end{prop}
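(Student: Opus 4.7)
The plan is to combine the on-diagonal lower bound from Proposition \ref{lower-diag} with the gradient estimate from Theorem \ref{space derivative estimates} via a mean value argument. Fix $t>0$ and $z_2 \in \R^{N+1}_+$ and consider $f(z) := p_{\mathcal L}(t,z,z_2)$, which by Lemma \ref{regpb} belongs to $C^1_b(\overline{\R^{N+1}_+})$.

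First I would rewrite the gradient estimate in terms of the measure $V$. Lemma \ref{Misura palle} yields
$$\frac{1}{V(z_2,\sqrt t)} \simeq t^{-\frac{N+1}{2}}\, y_2^{-c}\left(1\wedge\frac{y_2}{\sqrt t}\right)^{c},$$
so Theorem \ref{space derivative estimates} takes the form
$$|\nabla f(z)| \leq \frac{C_1}{\sqrt t\, V(z_2,\sqrt t)}\exp\!\left(-\frac{|z-z_2|^2}{kt}\right) \leq \frac{C_1}{\sqrt t\, V(z_2,\sqrt t)},\qquad z\in\R^{N+1}_+.$$

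Next, since $\R^{N+1}_+$ is convex, the straight segment from $z_2$ to $z_1$ lies entirely in $\R^{N+1}_+$, and $f\in C^1_b(\overline{\R^{N+1}_+})$ allows me to apply the mean value inequality along it. Combined with the on-diagonal lower bound $p_{\mathcal L}(t,z_2,z_2) \geq C_0/V(z_2,\sqrt t)$ from Proposition \ref{lower-diag}, this gives
$$p_{\mathcal L}(t,z_1,z_2) \geq p_{\mathcal L}(t,z_2,z_2) - |z_1-z_2|\,\sup_{z}|\nabla f(z)| \geq \frac{1}{V(z_2,\sqrt t)}\left(C_0 - C_1\,\frac{|z_1-z_2|}{\sqrt t}\right).$$

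Finally, setting $c := C_0/(2C_1)$, whenever $|z_1-z_2|\leq c\sqrt t$ the bracket is bounded below by $C_0/2$, and we conclude
$$p_{\mathcal L}(t,z_1,z_2) \geq \frac{C_0}{2\,V(z_2,\sqrt t)},$$
which is the desired inequality. I do not foresee any substantial obstacle: all the hard analytic work has already been done in Proposition \ref{lower-diag} (for the diagonal) and Theorem \ref{space derivative estimates} (for the gradient); the only bookkeeping is the passage from the form $y_2^{-c}(1\wedge y_2/\sqrt t)^{c}$ to $V(z_2,\sqrt t)^{-1}$, and the verification that the integration path stays in the open half-space, both of which are immediate.
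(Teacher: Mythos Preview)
Your proof is correct and follows essentially the same route as the paper: mean value inequality plus the gradient bound of Theorem \ref{space derivative estimates} (rewritten via Lemma \ref{Misura palle} in terms of $V$) to control $p_{\mathcal L}(t,z_1,z_2)-p_{\mathcal L}(t,z_2,z_2)$, and then the on-diagonal lower bound of Proposition \ref{lower-diag}. Your explicit remarks on the $C^1_b$ regularity and on the segment remaining in $\R^{N+1}_+$ are welcome clarifications the paper leaves implicit.
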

{\sc Proof.}
Using the mean value theorem we get 
\begin{align*}
	|p_{\mathcal L}(t,z_1,z_2)-p_{\mathcal L}(t,z_1',z_2)|\leq |\nabla p_{\mathcal L}(t,\tilde{z},z_2)||z_1-z_1'|,\qquad \forall z_1,\ z_1',\ z_2\in\R^{N+1}_+
\end{align*}
where $\tilde z$ lies in the segment linking $z_1$ and $z_1'$. Then using  the gradient estimate of Theorem \ref{space derivative estimates} and Remark \ref{equivalent V up low} we get
	\begin{align*}
	|\nabla p_{{\mathcal  L}}(t,\tilde z,z_2)|\leq 
	\frac{C}{\sqrt t\,V\left(z_2,\sqrt t\right)}\,\exp\left(-\dfrac{|z_1-z_2|^2}{kt}\right)\leq 	\frac{C}{\sqrt t\,V\left(z_2,\sqrt t\right)},
\end{align*}
which combined with the previous inequality yields
\begin{align*}
	|p_{\mathcal L}(t,z_1,z_2)-p_{\mathcal L}(t,z_1',z_2)|\leq \frac{C}{t^\frac{1}{2}V(z_2,\sqrt t)} |z_1-z_1'|.
\end{align*}
 Then, specializing  the above inequality to $z_1'=z_2$ and  using   Proposition \ref{lower-diag}, we get  for some positive constants $C,\ C_1>0$.
\begin{align*}
	p_{\mathcal L}(t,z_1,z_2)&\geq p_{\mathcal L}(t,z_2,z_2)-\frac{C}{t^{\frac{1}{2}} V(z_2,\sqrt t)} |z_1-z_2|\\[1ex]&\geq \frac{C_1}{V(z_2,\sqrt t)}-\frac{C}{t^{\frac{1}{2}}V(z_2,\sqrt t)} |z_1-z_2|
	= \frac{{C}}{V(z_2,\sqrt t)}\left(c-t^{-\frac{1}{2}} |z_1-z_2|\right)
\end{align*}
where we set $c=C_1/C$. Then imposing  $c-t^{-\frac{1}{2}}|z_1-z_2|\geq \frac c 2$, that is  $\frac{|z_1-z_2|}{\sqrt t}\leq \frac{c}{2}$, we get the required claim.\qed 

We can finally deduce the lower  estimate \eqref{low kernel measure} in the whole space using a classical iterative  argument based on  the doubling property of Lemma \ref{Misura palle} satisfied by the function $V$  and  the chain condition satisfied by $\R^{N+1}_+$, namely the existence, for any $z_1$, $z_2\in\R^{N+1}_+$ of a sequence of point $\omega_0=z_1,\ \omega_2,\ \dots,\ \omega_n=z_2\in\R^{N+1}_+$ joining $z_1$ and $z_2$ and satisfying
\begin{align*}
	|\omega_i-\omega_{i+1}|\leq C\,\frac{|z_1-z_2|}n,\qquad \forall i=0,\dots,n-1.
\end{align*}
\begin{teo} \label{lower}
There exists a positive constant $C$ such that the heat kernel $p_\mathcal L$ of $\mathcal L$ satisfies  
	\begin{align*}
		p_{\mathcal L}(t,z_1,z_2)
		&\geq \frac{C}{V\left(z_1,\sqrt t\right)^{\frac 1 2}V\left(z_2,\sqrt t\right)^{\frac 1 2}}\exp\left(-\frac{|z_1-z_2|^2}{\kappa t}\right),\quad \forall \ t>0, \ z_1,\ z_2\in\R^{N+1}_+.
	\end{align*}
\end{teo}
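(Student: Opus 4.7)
The plan is to deduce the off-diagonal lower bound from Proposition \ref{lower-small-ball} by a classical chaining/iterative argument driven by the Chapman--Kolmogorov (reproducing) property of $p_{\mathcal L}$ and the doubling character of $V$ (Lemma \ref{Misura palle}). In the regime $|z_1-z_2|\leq c\sqrt t$, with $c$ the constant of Proposition \ref{lower-small-ball}, the claim is already in hand since doubling yields $V(z_1,\sqrt t)\simeq V(z_2,\sqrt t)$ and the exponential factor on the right is bounded below by a positive constant. I therefore focus on the complementary range $|z_1-z_2|>c\sqrt t$.

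Given such a pair, I would choose an integer $n\simeq |z_1-z_2|^2/(c^2 t)$ and, using the trivial chain condition in $\R^{N+1}_+$ (the straight segment from $z_1$ to $z_2$ remains in the closed half-space), pick intermediate points $w_0=z_1, w_1,\dots,w_n=z_2$ with $|w_i-w_{i+1}|\leq c\sqrt{t/n}/2$. For a sufficiently small $\rho>0$, set $B_i:=B(w_i,\rho\sqrt{t/n})\cap\R^{N+1}_+$; by construction $|\zeta_i-\zeta_{i+1}|\leq c\sqrt{t/n}$ whenever $\zeta_i\in B_i$ and $\zeta_{i+1}\in B_{i+1}$. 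Iterating the reproducing property on time-steps of length $t/n$ and restricting each integration variable to $B_i$,
$$p_{\mathcal L}(t,z_1,z_2)\geq \int_{B_1\times\cdots\times B_{n-1}}\prod_{i=0}^{n-1} p_{\mathcal L}\!\left(\tfrac t n,\zeta_i,\zeta_{i+1}\right)\prod_{i=1}^{n-1} y_{\zeta_i}^c\, d\zeta_i,$$
with $\zeta_0=z_1$, $\zeta_n=z_2$. Proposition \ref{lower-small-ball} bounds each factor from below by $c_1/V(\zeta_{i+1},\sqrt{t/n})$, and the doubling property of $V$ allows both to replace $\zeta_{i+1}$ by $w_{i+1}$ and to identify $\mu(B_i)\simeq V(w_i,\sqrt{t/n})$, so the intermediate volumes telescope and produce $p_{\mathcal L}(t,z_1,z_2)\geq c_2^{\,n}/V(z_2,\sqrt{t/n})$ for some $c_2\in(0,1)$.

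Finally, Lemma \ref{Misura palle} provides $V(z_2,\sqrt t)/V(z_2,\sqrt{t/n})\leq C n^{(N+1+c^+)/2}$, whence
$$p_{\mathcal L}(t,z_1,z_2)\geq \frac{c_2^{\,n}}{C\, n^{(N+1+c^+)/2}}\cdot\frac{1}{V(z_2,\sqrt t)}.$$
Since $n\simeq|z_1-z_2|^2/(c^2 t)$, the geometric factor $c_2^{\,n}=\exp(-n|\log c_2|)$ yields the Gaussian $\exp(-|z_1-z_2|^2/(\kappa' t))$, while the polynomial loss $n^{(N+1+c^+)/2}=\exp\bigl(O(\log(|z_1-z_2|^2/t))\bigr)$ is absorbed by slightly enlarging $\kappa'$ to some $\kappa>0$. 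The resulting bound $p_{\mathcal L}(t,z_1,z_2)\geq (C/V(z_2,\sqrt t))\exp(-|z_1-z_2|^2/(\kappa t))$ is equivalent, by Remark \ref{equivalent V up low}, to the symmetric form in the statement. The main technical hurdle is the careful bookkeeping of constants across the $n$ iterated steps: in particular, using doubling (rather than translation invariance, which is spoiled by the weight $y^c$) to compare volumes at different base points, and checking that the single rate $c_2<1$ furnished by Proposition \ref{lower-small-ball} is genuinely uniform in $z_1,z_2,t$ so that the exponent $n|\log c_2|$ in $c_2^n$ reproduces the correct multiple of $|z_1-z_2|^2/t$ without deteriorating as the chain gets long.
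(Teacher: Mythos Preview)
Your argument is correct and is precisely the classical chaining/iterative scheme the paper has in mind: the paper's proof is a one-line reference to \cite[Theorem 7.29]{Ouhabaz}, which combines the near-diagonal bound of Proposition \ref{lower-small-ball}, the reproducing property, the chain condition in $\R^{N+1}_+$, and the doubling of $V$ from Lemma \ref{Misura palle} exactly as you do. You have in fact supplied more detail than the paper, and the bookkeeping you flag (uniform constants across the $n$ steps, absorbing the polynomial factor $n^{(N+1+c^+)/2}$ into the Gaussian) is handled correctly.
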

{\sc Proof.} The proof follows identically as in the proof of  \cite[Theorem 7.29]{Ouhabaz}.\qed
 
 \medskip

 We end the section by  stating the final result for the  general operator \eqref{general operator def} of Section \ref{section general operator} under  Neumann or oblique derivative boundary condition which is a direct consequence of  Theorems \ref{true.kernel}  and  \ref{lower}   and   the transformation explained in  Section \ref{Section oblique}.
\begin{teo} \label{complete-sharp}
	Let $v=(d,c)\in\R^{N+1}$ with $d=0$ if $c =0$, $A$ be 
	an  elliptic matrix and let us consider the operator
	\begin{align*}
		\mathcal L =\mbox{Tr }\left(AD^2u\right)+\frac{ v\cdot \nabla }{y}
	\end{align*} endowed with the boundary conditions
\begin{align*}
	&\lim_{y \to 0} y^{\frac c \gamma}\, v \cdot \nabla u=0,\qquad \text{(if $c\neq 0$)},\qquad \qquad 
	\lim_{y \to 0} y^{\frac c \gamma}\, D_y u=0,\qquad \hspace{2ex}\text{(if $c=0$)}
\end{align*}
where $\gamma=a_{N+1,N+1}$. If   $\frac{c}{\gamma}+1>0$ then the semigroup $(e^{t{\mathcal L}})_{t> 0}$ consists of integral operators	and its heat kernel $p_{{\mathcal L}}$, written  with respect the measure $y^\frac{c}{\gamma}dz$, satisfies for some  $C,k>0$, 
 \begin{align*}
 	p_{{\mathcal  L}}(t,z_1,z_2)
 	\simeq C t^{-\frac{N+1}{2}} y_1^{-\frac{c}{2\gamma}} \left(1\wedge \frac {y_1}{\sqrt t}\right)^{\frac{c}{2\gamma}} y_2^{-\frac{c}{2\gamma}} \left(1\wedge \frac{y_2}{\sqrt t}\right)^{\frac{c}{2\gamma}}\,\exp\left(-\dfrac{|z_1-z_2|^2}{kt}\right).
 \end{align*}
\end{teo}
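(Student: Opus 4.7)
\medskip

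\noindent\emph{Proof plan for Theorem \ref{complete-sharp}.} The upper bound is already the content of Theorem \ref{true.kernel}, so the only remaining task is to deduce the matching lower bound for the general operator from the lower bound of Theorem \ref{lower}, which is established solely for the model operator $\Delta_x+2a\cdot\nabla_x D_y+B_y$ with $B_y=D_{yy}+(c/\gamma)y^{-1}D_y$ under Neumann conditions. The strategy is to transport the lower bound along the two-step reduction of Section \ref{Section oblique}.

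First, in the case $c\neq 0$, I would invoke the isometry $T$ of \eqref{Tran def}, which maps $L^p_{c/\gamma}$ onto itself and, by Lemma \ref{Isometry action der}, satisfies $T^{-1}\mathcal L T=\tilde{\mathcal L}$, where $\tilde{\mathcal L}=\mbox{Tr}(\tilde A D^2)+(c/y)D_y$ has $\tilde d=0$ and Neumann boundary conditions. Consequently $e^{t\mathcal L}=T\, e^{t\tilde{\mathcal L}}\, T^{-1}$. Writing $\phi(x,y)=(x-(d/c)y,y)$, the kernels (both with respect to the measure $y^{c/\gamma}dz$) are linked by $p_{\mathcal L}(t,z_1,z_2)=p_{\tilde{\mathcal L}}(t,\phi(z_1),\phi(z_2))$, since $\phi$ has unit Jacobian and fixes the $y$-coordinate. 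Next, the linear change of variables $x\mapsto \sqrt{\gamma}\,\tilde Q^{-1/2}x$ on $\R^N$ (combined with a time rescaling by $\gamma$) reduces $\tilde{\mathcal L}$, up to a harmless constant, to the model operator of \eqref{model operator} with Bessel coefficient $c/\gamma$ and with $a=\gamma^{-1/2}\tilde Q^{-1/2}\tilde q$; the ellipticity of $A$ guarantees $|a|<1$.

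The key point is that both reductions are affine bijections of $\R^{N+1}_+$ that preserve the $y$-coordinate exactly. Calling $\Phi$ the composite map, one obtains $p_{\mathcal L}(t,z_1,z_2)=C_0\, p_{\mathrm{mod}}(\gamma t,\Phi z_1,\Phi z_2)$ for an explicit Jacobian constant $C_0>0$, and since $\Phi$ is a linear isomorphism, there are constants $c_1,c_2>0$ with $c_1|z_1-z_2|\le |\Phi z_1-\Phi z_2|\le c_2|z_1-z_2|$. Because the $y$-coordinates of $\Phi z_i$ coincide with $y_i$, the boundary prefactors $y_i^{-c/(2\gamma)}\bigl(1\wedge y_i/\sqrt{t}\bigr)^{c/(2\gamma)}$ appearing in \eqref{up low section 1} are transported verbatim, while the equivalence of $|\Phi z_1-\Phi z_2|$ with $|z_1-z_2|$ only changes the Gaussian constant $k$ (which is not fixed in the statement anyway).

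Applying Theorem \ref{lower} to $p_{\mathrm{mod}}$, rewriting it in the equivalent form of Remark \ref{equivalent V up low}, and then transporting back through $\Phi$ yields the lower bound of Theorem \ref{complete-sharp}; combined with Theorem \ref{true.kernel} (and Remark \ref{oss equiv estimate} to pass between the two equivalent formulations) this completes the proof. The case $c=0$, $d=0$ is already of the form covered directly after a single linear change of variables in $x$, so no isometry $T$ is needed. The only delicate bookkeeping is keeping track of constants in the Gaussian exponent and verifying that the weight $y^{c/\gamma}$ is invariant under $\Phi$, which it is because $\Phi$ acts trivially on $y$; beyond that the argument is mechanical and no genuine analytic obstacle remains, the hard work being already absorbed into Theorem \ref{lower}.
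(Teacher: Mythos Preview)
Your proposal is correct and follows precisely the approach that the paper itself indicates: the paper does not give a detailed proof of Theorem \ref{complete-sharp} but simply states that it is ``a direct consequence of Theorems \ref{true.kernel} and \ref{lower} and the transformation explained in Section \ref{Section oblique}.'' You have faithfully unpacked that reduction --- the isometry $T$ of Lemma \ref{Isometry action der} to remove $d$, the linear change in $x$ to normalise the top-order part, the observation that both maps fix the $y$-coordinate and are bi-Lipschitz in $z$ --- and verified that the prefactors and Gaussian exponent transport correctly.
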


\section{Appendix}\label{Appendix}

We consider on the weighted spaces $L^p_m(\R^{N+M}):=L^p\left(\R^{N+M},|y|^m,dxdy\right)$ the two-parameters  family of integral operators $\left(S_{\alpha,\beta}(t)\right)_{t>0}$  defined for $\alpha,\beta\in\R$  and $t>0$ by
\begin{align*}
	S^{\alpha,\beta}(t)f(z_1)=t^{-\frac {N+M} 2}\,\left (\frac{|y_1|}{\sqrt t}\wedge 1 \right)^{-\alpha}\int_{\R^{N+M}}  \left (\frac{|y_2|}{\sqrt t}\wedge 1 \right)^{-\beta}
	\exp\left(-\frac{|z_1-z_2|^2}{\kappa t}\right)f(z_2) \,dz_2,
\end{align*}
where $\kappa$ is a positive constant, $z_1=(x_1,y_1),\ z_2=(x_2,y_2)\in\R^{N+M}$. We omit the dependence on $\kappa$ even though in some proofs we need to vary it.
We characterize in the proposition below the  boundedness of $\left(S_{\alpha,\beta}(t)\right)_{t>0}$ on $L^p_m$. The proof follows by using the strategy in \cite[Proposition 12.2]{MNS-Caffarelli} where the case $1<p<\infty$ has  been considered. For completeness we refer to \cite[Proposition 6.2]{MNS-Caf-Schauder} for the case $p=\infty$.\\

We start by observing  that the  scale homogeneity of $S^{\alpha,\beta}$ is $2$ since a simple change of variable in the integral yields
\begin{align*}
	S^{\alpha,\beta}(t)\left(I_s f\right)=I_s \left(S^{\alpha,\beta}(s^2t) f\right),\qquad I_sf(z)=f(sz),\qquad t, s>0,
\end{align*}
which in particular gives 
\begin{align*}
	S^{\alpha,\beta}(t)f =I_{1/\sqrt t} \left(S^{\alpha,\beta}(1) I_{\sqrt t} f\right),\qquad t> 0.
\end{align*}
The boundedness of $S^{\alpha,\beta}(t)f $ in $L^p_m(\R^{N+M})$ is then equivalent to that for $t=1$ and $\|S^{\alpha,\beta}(t)\|_p= \|S^{\alpha,\beta}(1)\|_p$.

\begin{prop}\label{Boundedness theta}
	Let $m\in\R$, $\theta\geq 0$ and $1\leq p<\infty$. The following properties are equivalent.
	\begin{itemize}
		\item[(i)]  For every $t>0$ $(S^{\alpha,\beta}(t))_{t \ge 0}$ is  bounded from   $L^p_m$ to $ L^p_{m-p\theta}$ and
		\begin{align*}
			\|S^{\alpha,\beta}(t)\|_{L^p_m\to L^p_{m-p\theta}}\leq Ct^{-\frac\theta 2}.
		\end{align*}
		\item[(ii)] $S^{\alpha,\beta}(1)$ is  bounded from  from   $L^p_m$ to $ L^p_{m-p\theta}$.
		\item[(iii)] One of the following properties:
		\begin{enumerate}
			\item $1<p<\infty$ and $\alpha+\theta <\frac{M+m}p < M-\beta$;\smallskip 
			\item $p=1$ and $\alpha+\theta <M+m \leq M-\beta$.
		\end{enumerate}
		In  particular $\alpha+\beta<M-\theta$.
	\end{itemize}
\end{prop}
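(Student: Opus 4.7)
The equivalence (i) $\Leftrightarrow$ (ii) follows immediately from the scaling identity $S^{\alpha,\beta}(t)=I_{1/\sqrt t}\,S^{\alpha,\beta}(1)\,I_{\sqrt t}$ stated above, together with the elementary computation $\|I_s f\|_{L^p_m}=s^{-(N+M+m)/p}\|f\|_{L^p_m}$; this also produces the asserted $t^{-\theta/2}$ decay of the norm. To prove (ii) $\Leftrightarrow$ (iii) we first reduce to a one-dimensional estimate in $y$: since the Gaussian splits as $e^{-|x_1-x_2|^2/\kappa}e^{-|y_1-y_2|^2/\kappa}$ and convolution with $e^{-|\cdot|^2/\kappa}$ is bounded on $L^p(\R^N,dx)$ for every $p\ge 1$, Minkowski's integral inequality applied to $g(y):=\|f(\cdot,y)\|_{L^p(\R^N,dx)}$ reduces the question to the boundedness of the positive operator
\begin{equation*}
Tg(y_1):=(|y_1|\wedge 1)^{-\alpha}\int_{\R^M}(|y_2|\wedge 1)^{-\beta}e^{-|y_1-y_2|^2/\kappa}g(y_2)\,dy_2
\end{equation*}
from $L^p(\R^M,|y|^m\,dy)$ into $L^p(\R^M,|y|^{m-p\theta}dy)$.

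\textbf{Necessity of (iii).} For $p=1$, Fubini shows that the boundedness is equivalent to the pointwise estimate
\begin{equation*}
(|y_2|\wedge 1)^{-\beta}\int_{\R^M}(|y_1|\wedge 1)^{-\alpha}|y_1|^{m-\theta}e^{-|y_1-y_2|^2/\kappa}\,dy_1\le C|y_2|^m
\end{equation*}
for a.e. $y_2$; local integrability of the $y_1$-integrand near zero forces $\alpha+\theta<M+m$, and letting $y_2\to 0$ (so that the $y_1$-integral tends to a positive finite constant) forces $-\beta\ge m$, i.e.\ $M+m\le M-\beta$. For $1<p<\infty$ one tests on suitable truncated powers of $|y|$ concentrated near the origin, such as $g_\varepsilon(y)=|y|^{-(M+m)/p}\chi_{\{\varepsilon<|y|<1\}}$; comparing the growth of $\|g_\varepsilon\|_{L^p_m}$ and $\|Tg_\varepsilon\|_{L^p_{m-p\theta}}$ as $\varepsilon\to 0$ produces the strict inequalities $\alpha+\theta<(M+m)/p<M-\beta$.

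\textbf{Sufficiency of (iii).} The case $p=1$ is obtained by verifying the above pointwise bound directly: split the $y_1$-integral into $\{|y_1|\le 1\}$ and $\{|y_1|>1\}$; on the first piece the integral converges because $\alpha+\theta<M+m$, on the second the Gaussian provides decay, and the resulting bound is then absorbed into $|y_2|^m$ using $M+m\le M-\beta$. For $1<p<\infty$ we apply the classical Schur test for $L^p$ to the conjugated operator $\tilde T g(y_1):=|y_1|^{(m-p\theta)/p}T(|\cdot|^{-m/p}g)(y_1)$ on $L^p(\R^M,dy)$, with positive test functions of the form $\phi(y),\psi(y)=(|y|\wedge 1)^{a}$ for suitable $a\in\R$; the strict inequalities $\alpha+\theta<(M+m)/p<M-\beta$ open a non-empty interval of admissible exponents making both Schur conditions hold, while the Gaussian again handles the large-$|y|$ region.

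\textbf{Main obstacle.} The delicate point is the simultaneous balancing of three competing powers of $|y|$ at the origin --- the two kernel weights $(|y_1|\wedge 1)^{-\alpha}$, $(|y_2|\wedge 1)^{-\beta}$ and the ambient weight $|y|^m$ (together with its shifted target counterpart $|y|^{m-p\theta}$) --- against the Gaussian decay at infinity. The thresholds in (iii) are sharp precisely because the relevant Schur integrals diverge at the endpoints, and the case $p=1$ needs to be treated separately since the upper inequality degenerates to equality.
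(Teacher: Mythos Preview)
Your approach is correct and, in fact, supplies the details that the paper omits: the paper's own proof consists of a single sentence referring to \cite[Proposition 12.2]{MNS-Caffarelli} for the case $N=0$. Your reduction via the $x$-Gaussian convolution is precisely the step that passes from general $N$ to $N=0$, and the Schur test / duality argument in the $y$-variable is the standard method one expects in the cited reference. A couple of minor remarks: the reduction to the $y$-operator $T$ is an equivalence (not only an upper bound), which you need for the necessity direction---this is seen by testing on tensor products $f(x,y)=h(x)g(y)$ with $h$ a fixed Gaussian; and for the necessity of the strict inequality $\alpha+\theta<(M+m)/p$ when $1<p<\infty$ it is slightly cleaner to test on a fixed $g=\chi_{\{1/2<|y|<1\}}$ and observe that $Tg(y_1)\gtrsim |y_1|^{-\alpha}$ near the origin, rather than on the logarithmically-diverging family $g_\varepsilon$. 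With these small clarifications your argument is complete and matches the route indicated by the paper.
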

\begin{proof}
	The proof follows identically  as in  \cite[Proposition 12.2]{MNS-Caffarelli} where the case $N=0$ has been considered. 
\end{proof}

We refer also the reader to \cite[Proposition A.1]{MNS-Caf-Schauder} for  the case $p=\infty$ of the above Proposition.

\bibliography{../TexBibliografiaUnica/References}
\end{document}